\documentclass[12pt]{amsart}
\usepackage[notref, notcite, final]{showkeys} 
\usepackage[active]{srcltx}
\usepackage{latexsym}
\usepackage{graphicx}
\usepackage{xcolor}
\usepackage{tikz}
\usepackage[normalem]{ulem}
\usepackage{soul}

\usepackage{latexsym}
\usepackage{amsmath}
\usepackage{mathtools}
\usepackage{comment}

\newcommand{\ignore}[1]{}
\renewcommand{\marginpar}[1]{}

\newcommand{\hide}[1]{}

\DeclareMathOperator{\ad}{ad}

\DeclareMathOperator{\Der}{Der}

\newcommand{\F}{F}

\newcommand{\Z}[0]{\mathbb Z}

\newcommand{\ee}{\varepsilon}

\newtheorem{dummy}{Dummy}

\numberwithin{dummy}{section}

\newtheorem{lemma}[dummy]{Lemma}

\newtheorem{theorem}[dummy]{Theorem}

\newtheorem{prop}[dummy]{Proposition}

\newtheorem{notation}[dummy]{Notation}

\theoremstyle{definition}
\newtheorem{definition}[dummy]{Definition}

\newtheorem{convention}[dummy]{Convention}

\theoremstyle{remark}
\newtheorem{rem}[dummy]{Remark}
\newtheorem*{rem*}{Remark to ourselves}

\begin{document}

\bibliographystyle{amsalpha}

\author{M. Avitabile}
\email[M. Avitabile]{marina.avitabile@unimib.it}
\address{Dipartimento di Matematica e Applicazioni\\
  Universit\`a degli Studi di Milano - Bicocca\\
 via Cozzi 55\\
  I-20125 Milano\\
  Italy}
\author{A. Caranti}
\email[A. Caranti]{andrea.caranti@unitn.it}
\address{Dipartimento di Matematica \\
Universit\`a degli Studi di Trento\\
via Sommarive 14\\
I-38123 Trento\\
Italy}

\author{S. Mattarei}
\email[S. Mattarei]{sandro.mattarei@unimib.it}
\address{Dipartimento di Matematica e Applicazioni\\
  Universit\`a degli Studi di Milano - Bicocca\\
 via Cozzi 55\\
I-20125 Milano\\
Italy}

\begin{abstract}
The graded Lie algebra associated with the Nottingham group over a field of prime characteristic serves as  a fundamental example of Nottingham algebras, a class of infinite-dimensional, positively graded thin algebras.

This paper completes the classification of Nottingham algebras initiated in earlier papers, proving both existence and uniqueness results that determine all such algebras up to isomorphism.

\end{abstract}

\title[Classification of {N}ottingham algebras]{Classification of {N}ottingham algebras}

\thanks{The authors are members of INdAM-GNSAGA. Funded by the European Union - Next Generation EU, Missione 4 Componente 1 CUP B53D23009410006, PRIN 2022 - 2022PSTWLB - Group Theory and Applications}

\maketitle

\thispagestyle{empty}

\section{Introduction}

The main result of this paper is a complete classification of {\em Nottingham algebras}.
These are infinite-dimensional, positively graded Lie algebras,
whose most prominent example is the graded Lie algebra associated
with the lower central series
of the {\em Nottingham group}~\cite{Johnson, DdSMS, Camina} over the prime field
$\mathbb{F}_p$, where $p$ is an odd prime.

The structure of a group is reflected in that of a Lie algebra associated with it.
The Nottingham group is a {\em thin group}, that is, a group in which every
nontrivial normal subgroup lies between two suitable terms of its lower central
series and the lower central factors are elementary abelian of order $p$ or $p^2$.
Consequently, its associated Lie algebra is a {\em thin algebra}, that is, a positively graded Lie algebra  where each homogeneous component has dimension one or two, and which satisfies the following {\em covering property:}
every nonzero graded ideal lies between two suitable consecutive Lie powers of the algebra.
Homogeneous components of dimension two in a thin algebra are called {\em diamonds}, a term
originating from  their lattice of ideals.
The first homogeneous component of a thin algebra is always a diamond,
the {\em first diamond} of the algebra.

The second  diamond of the Lie algebra associated with the Nottingham group has degree $p$, reflecting the fact that the non-cyclic lower central factors
of the group occur in each degree congruent to $1$ modulo $p-1$.
The Lie algebra associated with the Nottingham group has been extensively studied in~\cite{Car:Nottingham}, where a natural generalization is presented in which the second diamond has degree a power of the characteristic of the underlying field. A thin algebra $L =\bigoplus_{i=1}^{\infty}L_i$ over a field of characteristic  an odd prime $p$,  is a Nottingham algebra if its second diamond occurs in degree $q$, where $q$ is a power of $p$.

In every Nottingham algebra, each diamond after the first can be assigned a {\em type},
which is an element of the underlying field or, possibly, $\infty$.
The second diamond $L_q$ has invariably type $-1$, and we assign no type to the
first diamond $L_1$. For a couple of values of the type, the corresponding
diamonds are degenerate and  we refer to them as {\em fake diamonds}.
The type of a diamond $L_m$ describes the adjoint action of $L_1$ on $L_m$, in such a way that knowledge of
all degrees in which diamonds occur in $L$, and their types, determines $L$ up to isomorphism. We recall the definition of type of a diamond in Section~\ref{sec:types}.
The difference in degrees of any two consecutive diamonds in a Nottingham algebra
has been investigated in~\cite{AviMat:diamond_distances}, where it is proved
that it equals $q-1$, up to an appropriate interpretation of fake diamonds.
For the reader's convenience, in Theorem~\ref{thm:distance} we quote the
results of~\cite{AviMat:diamond_distances}, formulated in a way that suits
the purposes of this paper.

A wide variety of Nottingham algebras are known. In several cases, the types
of the diamonds follow a periodic pattern. We refer to
those algebras as {\em regular} Nottingham algebras.
They arise from certain cyclic gradings of simple Lie algebras of Cartan type.
In particular, the thin algebra associated with the Nottingham group is a regular
one and arises from a cyclic grading of the {\em Witt algebra}.
We give a more detailed description in Theorem~\ref{thm:regular}, which is an 
existence result for Nottingham algebras with periodic diamond-type patterns and
summarizes the conclusions of various papers (\cite{Car:Nottingham}, \cite{Avi}, \cite{AviMat:A-Z}, \cite{AviMat:mixed_types}).
Uniqueness results are also obtained for the algebras in Theorem~\ref{thm:regular},
in the sense that each of them is uniquely determined by an appropriate
finite-dimensional quotient.
We refer the reader to the commentaries following Theorem~\ref{thm:regular} for details.

Among Nottingham algebras, regular algebras form a small minority.
Further Nottingham algebras, and in fact uncountably many,
are closely related to (graded Lie) algebras of {\em maximal class}.
They have been described by David Young  in his  PhD thesis~\cite{Young:thesis}.
The original aim of his work was the classification of Nottingham algebras with
a fake third diamond in degree $2q-1$.
Algebras of {\em coclass} $2$ form a special case of these.
Another special class is formed  by the algebras
$\mathcal{T}_{q,1}(M)$, which are in one-to-one correspondence with the
algebras of maximal class having at most two distinct {\em two-step centralizers}.
His goal was fully achieved, and we quote his classification result as 
Theorem~\ref{thm:long_second_chain}.

Although it was beyond the scope of his work, Young also described
a second procedure that produces a Nottingham algebra from an algebra
of maximal class, which are denoted by $\mathcal{T}_{q,2}(M)$.
In this paper, we address the existence and uniqueness of these algebras,
thereby completing the final step toward the full classification of Nottingham algebras.
In Section~\ref{sec:construction} we show that the algebras  $\mathcal{T}_{q,2}(M)$ 
are in one-to-one correspondence with the algebras of maximal class having
two distinct two-step centralizers.
We prove in Theorem~\ref{thm:uniqueness} that these algebras are uniquely determined by a suitable finite-dimensional quotient.
A crucial role in our argument is played by a natural $(\mathbb{Z}\times\mathbb{Z})$-grading of a Nottingham algebra, which we introduce in Section~\ref{sec:grading}.
To streamline the exposition, the most technical lemmas are deferred
to the concluding Section~\ref{sec:gc}, with forward references provided as needed throughout the paper.

Together, Theorems~\ref{thm:regular} and~\ref{thm:irregular}
provide a complete classification of Nottingham algebras,
thereby settling the classification problem.

\section{Nottingham algebras and their diamonds}\label{sec:types}

This section, which introduces basic definitions and results on Nottingham algebras, is essentially quoted from~\cite{AviMat:diamond_distances} and~\cite{AviMat:earliest}.
(Note that the logical order of these two articles differs from their chronological order.)

A {\em thin} (Lie) algebra is a graded Lie algebra $L=\bigoplus_{i=1}^{\infty}L_i$,
with $\dim L_1=2$ and satisfying the {\em covering property:}
for each $i$, each nonzero $z\in L_i$ satisfies $[zL_1]=L_{i+1}$.
However, in this paper we assume $L$ to have infinite dimension,
and it follows that $L$ has trivial centre.
As we mentioned in the Introduction, our definition of a Nottingham algebra includes restrictions on $p$ and $q$.

\begin{definition}\label{def:Nottingham}
A {\em Nottingham algebra} is a thin algebra, over a field of characteristic $p>3$,
with second diamond $L_q$, where $q>5$ is a power of $p$.
\end{definition}

The restrictions on $p$ and $q$ in Definition~\ref{def:Nottingham} serve to avoid
various instances of exceptional behaviour in small characteristics,
which start where certain Nottingham algebras
fit the recipe of other families of thin  algebras.

In this paper we use the left-normed convention
for iterated Lie products, hence $[abc]$ stands for $[[ab]c]$.
We also use the shorthand
$[ab^i]=[ab\cdots b]$,
where $b$ occurs $i$ times.

Let us start a bit more generally and consider first a thin  algebra $L$ with $\dim(L_3)=1$.
Then there is a nonzero element $y$ of $L_1$, unique up to a scalar multiple, such that $[L_2y]=0$.
Extending to a basis $x,y$ of $L_1$, this means $[yxy]=0$.
It is not hard to deduce from this relation that no two consecutive components in such a thin  algebra $L$
can both be diamonds, see~\cite{Mat:sandwich} for a proof.
Thus, any two diamonds are separated by one or more one-dimensional homogeneous components.

More delicate arguments in~\cite{Mat:sandwich}
show that any  Lie algebra $L$
with second diamond occurring past $L_5$
satisfies $[Lyy]=0$, which means $(\ad y)^2=0$.
According to a well-known definition of Kostrikin's,
in odd characteristic that means $y$
is a {\em sandwich element} of $L$.
The significance of this fact is discussed in~\cite{Mat:sandwich}.
In particular, Nottingham algebras as defined in this paper satisfy $(\ad y)^2=0$.
One of our reasons for excluding $q=5$ from our definition of a Nottingham algebra
is the existence of the thin  algebra of classical type $A_2$ of~\cite{CMNS} (and~\cite{Mat:thin-groups})
which has a diamond in each degree congruent to $\pm 1$ modulo $6$ (in any characteristic except $2$).
In particular, this thin algebra has $L_5$ and $L_7$ as second and third diamond, which easily implies $[L_5yy]\neq 0$.

From now on let $L$ be a Nottingham algebra with second diamond $L_q$.
Then the element $y$ centralizes each homogeneous component from $L_{2}$ up to $L_{q-2}$.
That is a nontrivial assertion proved in~\cite{CaJu:quotients}, and relies on the theory of algebras of maximal class established in~\cite{CMN,CN}.
Consequently, $L_i$ is spanned by $[yx^{i-1}]$ for $2\le i<q$.
In particular, $v_1=[yx^{q-2}]$ spans the component $L_{q-1}$ and, in turn, $[v_1x]$ and $[v_1y]$ span the second diamond $L_q$.
The subscript in $v_1$ is motivated by a convenience
of numbering the diamonds in a certain way,
and this notation turns out to be more natural than denoting it $v_2$ as in some earlier papers.

It is now easy to see that one may redefine $x$ in such a way that
\begin{equation}\label{eq:second_diamond_type}
[v_1xx]=0=[v_1yy] \quad\textrm{and} \quad [v_1yx]=-2[v_1xy],
\end{equation}
see~\cite[Section~3]{AviMat:A-Z} for a cleaner excerpt of the original argument in~\cite{Car:Nottingham}.
In the rest of this paper we refer to such $x$ and $y$ as {\em standard generators} of $L$.

It follows immediately from Equation~\eqref{eq:second_diamond_type}
that the subspaces $\F x$ and $\F y$ are uniquely determined. The type of a diamond, as introduced in Definitions~\ref{def:type} and~\ref{def:type-fake} below, will not be affected by the choice of the generators of these two subspaces.

Because $[yx^q]=[v_1xx]=0$, we have $(\ad x)^q=0$.
Indeed, since $(\ad x)^q$ is a derivation of $L$, its kernel is a subalgebra, but then that must equal $L$ as both generators $x$ and $y$ belong to it.

We recall the definition of {\em type} of a diamond as introduced in ~\cite{CaMa:Nottingham}.
(Note that diamond types are defined differently for thin  algebras with second diamond $L_{2q-1}$, see~\cite{CaMa:thin}.)
We do not assign a type to the first diamond $L_1$.
Now let $L_m$ be a diamond past $L_1$, that is, a two-dimensional
homogeneous component of $L$ with $m>1$.
Because no two consecutive homogeneous components can be diamonds, $L_{m-1}$
is one-dimensional, and so is $L_{m+1}$.
If $w$ spans $L_{m-1}$, then $L_m$ is spanned by $[wx]$ and $[wy]$,
and $L_{m+1}$ is spanned by $[wxx]$, $[wxy]$, $[wyx]$ and $[wyy]$.
The following definition encodes particular relations
among the latter four elements.

\begin{definition}\label{def:type}
Let $L$ be a Nottingham algebra,
with second diamond $L_q$ and standard generators $x$ and $y$.
Let $L_m$ be a diamond of $L$, with $m>1$,
and let $w$ be a nonzero element in $L_{m-1}$.
\begin{itemize}
\item[(a)]
We say $L_m$ is a diamond of {\em finite type} $\mu$, where $\mu\in\F$, if
\begin{equation*}
[wxx]=0=[wyy] \quad\textrm{and} \quad \mu[wyx]=(1-\mu)[wxy].
\end{equation*}
\item[(b)]
We say $L_m$ is a diamond of {\em infinite type} if
\begin{equation*}
[wxx]=0=[wyy] \quad\textrm{and} \quad [wyx]=-[wxy].
\end{equation*}
\end{itemize}
\end{definition}

In particular, this definition applies to the second diamond $L_{q}$, which therefore has invariably type $\mu=-1$.

Note that the relations of Case~(a) read
\begin{equation*}
[wyy]=0,\quad [wzz]=[wyz]+[wzy], \quad [wzy]=\mu[wzz]
\end{equation*}
when stated in terms of the alternate generators $x$ and $z=x+y$ for $L$.
This was the choice of generators for $L$ made for calculations in~\cite{CaMa:Nottingham},
and it had the advantage that $[yz^{k-1}]$ is a nonzero element of $L_k$ for all $k>0$, and thus
$[yz^{k}]$ and $[yz^{k-1}y]$ span $L_{k+1}$.
The standard generators $x$ and $y$ employed here became preferable in later developments,
in great part because the diamond relations are homogeneous in the pair $x,y$,
and thus Nottingham algebras acquire a double grading that will be introduced in Section~\ref{sec:grading}.

The values $\mu=0$ and $\mu=1$ cannot actually occur in
Definition~\ref{def:type}.
If $\mu=0$ then the relations $[wxx]=0=[wxy]$
would imply that the element $[wx]$ is central,
and hence vanish because of our blanket assumption $\dim(L)=\infty$.
Similarly, if $\mu=1$ then the element $[wy]$ would be central,
and hence vanish.
Thus, strictly speaking, diamonds of type $0$ or $1$ cannot occur,
at least if we insist that a diamond should have dimension two,
as in Definition~\ref{def:type}.
Nevertheless, it is convenient for a uniform description
of the diamonds patterns
in Nottingham algebras to allow ourselves to
call {\em diamonds of type $0$ or $1$}
those one-dimensional homogeneous components $L_m$ that satisfy the relations of
Definition~\ref{def:type}
with $\mu=0$ or $1$.
This leads us to the following definition.

\begin{definition}\label{def:type-fake}
Let $L$ be a Nottingham algebra,
with second diamond $L_q$ and standard generators $x$ and $y$.
Let $L_{m-1}$ be a one-dimensional component, spanned by $w$, with $m>1$.
\begin{itemize}
\item[(a)]
We say $L_m$ is a diamond of {\em of type $1$} if
\begin{equation*}
[wxx]=0 \quad\textrm{and} \quad [wy]=0.
\end{equation*}
\item[(b)]
We say $L_m$ is a diamond {\em of type $0$} if
\begin{equation*}
[wx]=0 \quad\textrm{and} \quad [wyy]=0.
\end{equation*}
\end{itemize}
\end{definition}
We refer to diamonds of type $0$ or $1$ as {\em fake diamonds}
to distinguish them from the {\em genuine diamonds} of dimension two.
The necessity of including fake diamonds in a treatment
of Nottingham algebras arose, early in the development,
from the fact that in various notable instances diamonds occur at regular intervals, with types following an arithmetic progression
(see Section~\ref{sec:regular}).
When such arithmetic progression of types passes through $0$ or $1$, fake diamonds occur.

However, this carries an inherent ambiguity:
whenever $L_m$ satisfies the definition of a diamond of type $1$
(which amounts to $[L_{m-1}y]=0$ and $[L_mx]=0$), the next homogeneous component $L_{m+1}$ satisfies the definition of a diamond of type $0$
(because $[L_my]=L_{m+1}$ due to the covering property, and then $[L_{m+1}y]=0$ due to $[Lyy]=0$).
Thus, if $w$ spans $L_{m-1}$ then $[wx]$ spans $L_m$
and $[wxy]$ spans $L_{m+1}$, and we have the relations
\begin{equation}\label{eq:fake_diamonds}
[wy]=0, \qquad
[wxx]=0, \qquad
[wxyy]=0.
\end{equation}
The first and second are those in part~(a) of
Definition~\ref{def:type-fake}, and the second and third
are those in part $(b)$ if we use $w'=[wx]$ instead of $w$ in it.
For various reasons it is inconvenient to simultaneously regard
two consecutive components as fake diamonds, and so we adopt the following convention.
A motivation for this convention can be seen in Theorem~\ref{thm:distance},
and a concrete example appears in Subsection~\ref{subsec:L(q)}.

\begin{convention}\label{convention:fake}
Whenever we have a diamond $L_m$ of type $1$, necessarily followed by a diamond $L_{m+1}$ of type $0$,
we allow ourselves to call (fake) diamond precisely one of $L_m$
and $L_{m+1}$, of the appropriate type, and not the other.
\end{convention}

It is not at all obvious that a two-dimensional component
$L_m$ of an arbitrary Nottingham algebra, with $m>q$,
should satisfy the relations of Definition~\ref{def:type} for some $\mu$, thus allowing
type $\mu$ to be assigned to it.
In fact, this is one of the main conclusions of~\cite{AviMat:diamond_distances}.
More generally, it is shown there that whenever
$[L_{m-2}y]=0$ and $[L_{m-1}y]\neq 0$ for some $m>q$,
either $L_m$ is two-dimensional and can be assigned a
type $\mu$ according to Definition~\ref{def:type},
or $L_m$ is a (fake) diamond of type $0$
according to Definition~\ref{def:type-fake}.
As we have observed right after that definition, the latter situation
admits the alternate interpretation that $L_{m-1}$ is a diamond of type $1$.

Another consequence of the results of~\cite{AviMat:diamond_distances}
is that any two consecutive diamonds in an arbitrary Nottingham algebra can always be assumed to have a difference of $q-1$
in degrees, but only allowing an appropriate interpretation in case fake diamonds are involved, according to Convention~\ref{convention:fake}.
We quote what we need in a formulation given in~\cite{AviMat:earliest}.

\begin{theorem}[Theorem~3.1 in~\cite{AviMat:earliest}]\label{thm:distance}
Let $L$ be a Nottingham algebra with second diamond $L_q$, and standard generators $x$ and $y$.
Let $L_m$ be a (possibly fake) diamond of $L$, with $m\ge q$.
\begin{itemize}
\item[(a)]
If $L_m$ is a genuine diamond then $L_{m+q-1}$ is a diamond.
\item[(b)]
If $L_m$ is a diamond of type $1$, then either $L_{m+q-1}$
or $L_{m+q}$ is a diamond.
\item[(c)]
If $L_m$ is a diamond of type $0$ and, in addition,
$L_{m-q+1}$ is a diamond of nonzero type,
then $L_{m+q-1}$ is a diamond.
\item[(d)]
In each of the previous cases $y$ centralizes $L_{m+1},\ldots,L_{m+q-3}$,
and also $L_{m+q-2}$ if $L_{m+q-1}$ is not a diamond in assertion~(b).
\end{itemize}
\end{theorem}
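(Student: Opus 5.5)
This statement is quoted from \cite{AviMat:earliest}, so within the present paper no new proof is needed. The plan below describes how one would reconstruct the argument from the facts recalled in this section, namely $(\ad y)^2=0$, $(\ad x)^q=0$, the diamond relations of Definitions~\ref{def:type} and~\ref{def:type-fake}, and the covering property.

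\textbf{Setting up.} Let $L_m$ be a diamond and let $w$ span $L_{m-1}$. Set $u=[wx]$ in the genuine case, or the appropriate spanning element of $L_m$ in the fake cases. Using $[L_my]\subseteq$ a one-dimensional $L_{m+1}$, the element $v=[wxy]$ (or $[wyx]$, proportional by the type relation) spans $L_{m+1}$. One then argues inductively that $L_{m+j}$ is spanned by $[vx^{j-1}]$ and that $[vx^{j-1}y]=0$ for $1\le j\le q-3$. This is assertion~(d).

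\textbf{Key steps.}
\begin{enumerate}
\item Reduce $[vx^{j}y]$ to brackets involving the earlier segment between the previous diamond $L_{m-q+1}$ and $L_m$. For $m=q$ this segment is the initial one, where $y$ centralizes $L_2,\dots,L_{q-2}$ by \cite{CaJu:quotients}. The reduction rewrites $[wxyx^jy]$ via the Jacobi identity as a combination of $[w\,[xyx^{j}y]]$-type terms. Since $[yx^{j}y]$ is a multiple of $[yx^{j+1}]$ only for specific $j$, these terms are computed from the standard relations.
\item Show that the first place where $y$ fails to centralize is exactly $L_{m+q-2}$. This uses $(\ad x)^q=0$ together with the binomial expansion $[w\,x^{q}]=\sum\binom{q}{i}\cdots$, whose middle coefficients vanish mod $p$. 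It gives $[v x^{q-2}y]\ne0$ in case~(a), so $L_{m+q-1}$ is two-dimensional.
\item Verify the relations of Definition~\ref{def:type} at $L_{m+q-1}$, giving it a type; this is the content attributed to \cite{AviMat:diamond_distances}.
\end{enumerate}

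\textbf{Fake cases.} In case~(b) the type-$1$ relations $[wy]=0$ and $[wxx]=0$ leave the ambiguity of Convention~\ref{convention:fake}. The computation produces either a diamond at $m+q-1$, or a vanishing that pushes it to $m+q$; in the latter case $y$ also centralizes $L_{m+q-2}$. Case~(c) needs the hypothesis on $L_{m-q+1}$ to exclude degenerate propagation of zeros. The argument would carry the type $\mu$ of the previous diamond through the computation and use $\mu\ne0$ to divide.

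\textbf{Main obstacle.} The hardest step is~(c), together with proving that the component is a genuine two-dimensional component with a well-defined type rather than collapsing. I would try induction on the number of diamonds, comparing with the preceding interval via the double grading, where the relations are homogeneous in $x$ and $y$.
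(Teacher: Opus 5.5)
The paper gives no proof of this statement. It quotes it from \cite{AviMat:earliest}, which repackages the main results of \cite{AviMat:diamond_distances}, so your opening remark agrees with the paper. The reconstruction you then sketch, however, has a genuine gap, and Step~2 aims at a false conclusion.

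You claim that in case~(a) one gets $[vx^{q-2}y]\neq 0$, so that $L_{m+q-1}$ is two-dimensional. There is an index slip: $[vx^{q-2}y]$ lies in $L_{m+q}$, and the relevant element is $[vx^{q-3}y]$. More seriously, assertion~(a) only says that $L_{m+q-1}$ is a possibly fake diamond, and fake diamonds do follow genuine ones:
\begin{itemize}
\item $L_{2q-1}$ is fake after the genuine $L_q$ in every algebra of Theorem~\ref{thm:long_second_chain}; for instance it has type~$1$ in $\mathcal{N}(q,r)$ with $r>1$.
\item In $\mathcal{T}_{q,2}(M)$ the last diamond of each run of infinite type is followed, at distance $q-1$, by a fake diamond of type~$1$.
\item Even $[L_{m+q-2}y]\neq 0$ would not give $\dim L_{m+q-1}=2$, because of fake diamonds of type~$0$.
\end{itemize}
The tool you invoke cannot help in any case. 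There is no binomial expansion of $[wx^q]$; the only expansion involving $\binom{q}{i}$ is $[a[bx^q]]=[abx^q]-[ax^qb]$. That merely restates that $(\ad x)^q$ is a derivation, and relations of this kind can only force products to vanish.

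What must actually be proved, for $u$ spanning $L_{m+q-2}$, is that $[uxx]=0$ and that $\mu[uyx]=(1-\mu)[uxy]$ for some $\mu\in\F\cup\{\infty\}$. This covers the genuine case and both fake cases at once. It is exactly your Step~3, which you attribute to \cite{AviMat:diamond_distances}. As a reconstruction this is circular, and the whole induction rests on it, since each step uses the type of the preceding diamond.

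Step~1, which is assertion~(d), is likewise only asserted, and identities of the kind you indicate do not close the induction. Take $L_m$ genuine of type $\mu$, $w$ spanning $L_{m-1}$, $\lambda=\mu^{-1}-1$ (so $[wyx]=\lambda[wxy]$), and $v=[wxy]$. Assume $[vx^iy]=0$ for $i<j-1$. Expanding $0=[w[yx^jy]]$ and $0=[wx[yx^{j-1}y]]$ yields only
\[
\bigl(\lambda-j-(-1)^j\lambda\bigr)[vx^{j-1}y]=0
\quad\text{and}\quad
\bigl(1+(-1)^{j}\bigr)[vx^{j-1}y]=0 .
\]
Hence for odd $j$ with $j\equiv 2\lambda\pmod p$ nothing is learned. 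This already happens at the second diamond: with $q=p$ and $\lambda=-2$, it occurs at $j=p-4$, so these identities do not decide whether $y$ centralizes $L_{2p-4}$.

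Getting past such exceptional degrees requires further relations that your outline does not identify. The same is true of the dichotomy in~(b) and of the role of the hypothesis on $L_{m-q+1}$ in~(c), where you only say that ``the computation produces'' the conclusion or that one ``divides by $\mu$''. Supplying those relations is the substance of \cite{AviMat:diamond_distances}. As it stands, your proposal restates the theorem rather than proving it.
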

Various clarifications are in order.
Assertion~(d) of
Theorem~\ref{thm:distance},
together with~(a) and~(b),
show that in a Nottingham algebra
$y$ centralizes each homogeneous
component which is not
a diamond or immediately
precedes a diamond.
Thus, the degrees and types of the diamonds
(still making use of Convention~\ref{convention:fake} on fake diamonds)
suffice to completely describe an
arbitrary Nottingham algebra.

Next, the two conclusions of assertion~(b)
are not disjoint, the common case being
when $L_{m+q-1}$ is a diamond of type $1$,
which means the same as $L_{m+q}$
being a diamond of type $0$.
Also, the hypothesis of assertion~(b)
can be alternately read as $L_{m+1}$
being a diamond of type $0$.
Altogether, we see that the difference
in degree between consecutive diamonds
can always be read as $q-1$,
as long as we suitably interpret
any fake diamonds involved.

Finally, note that if we read the hypothesis of assertion~(c)
as $L_{m-1}$ being a diamond of type $1$, then assertion~(b)
would only tell us that either $L_{m+q-2}$ or $L_{m+q-1}$
is a diamond.
In fact, inferring the stronger
conclusion of assertion~(c)
requires information on the diamond
which precedes $L_m$.

All assertions of Theorem~\ref{thm:distance} were stated
in~\cite{AviMat:diamond_distances} under a blanket hypothesis
that Nottingham algebras have infinite dimension.
However, as pointed out there, those assertions remain true
for a finite-dimensional Nottingham algebra $L$
as long as none of the homogeneous components
they mention is the last or next-to-last nonzero
homogeneous component of $L$.

\section{The double grading of a Nottingham algebra}\label{sec:grading}

The fact, established in~\cite{AviMat:diamond_distances},
that every diamond of a Nottingham algebra can be assigned a type
(possibly including fake diamonds of types $1$ or $0$ as described above)
has the remarkable consequence that Nottingham algebras are {\em bigraded,} as follows.
If $L$ is a Nottingham algebra with standard generators $x$ and $y$, then
a natural $(\Z\times\Z)$-grading
$\bigoplus_{r,s\in\Z}L_{(r,s)}$
of $L$ is assigned by declaring $x$ and $y$
to have degrees $(1,0)$ and $(0,1)$, respectively.
We have
$[L_{(r,s)},x]\subseteq L_{(r+1,s)}$
and
$[L_{(r,s)},y]\subseteq L_{(r,s+1)}$,
and this grading is well defined because of the relations that hold in a Nottingham algebra,
notably the relations that define diamond types.

It might aid intuition to draw (partial) diagrams of $L$ according to this grading,
where, say, the vector $(1,0)$ points South-West and the vector $(0,1)$ points South-East.
Thus, the structure of $L$ up (or rather down) to its second diamond $L_q$
looks like in the following diagram.

\begin{center}
\begin{tikzpicture}
\node[draw, circle, minimum size=0.2cm] (CircleA) at (0,0) {};
    \node[fill, circle, minimum size=0.2cm] (CircleB) at (-1,-1) [label=left:$x$] {};
        \node[fill, circle, minimum size=0.2cm] (CircleC) at (1,-1)
        [label=right:$y$] {};
                \node[fill, circle, minimum size=0.2cm] (CircleD) at
                (0,-2)
                     [label=right:\protect{$[yx]$}] {};
                \node[fill, circle, minimum size=0.2cm] (CircleE) at
                (-1,-3)
                     [label=right:\protect{$[y x x]$}] {};
                \node[fill, circle, minimum size=0.2cm] (CircleF) at (-2,-4) {};

    \node[fill, circle, minimum size=0.2cm] (CircleA1) at (-3,-5)
    [label=left:$v_{1}$] {};
   \node[fill, circle, minimum size=0.2cm] (CircleB1) at (-4,-6)
   [label=left:\protect{$[v_{1} x]$}] {};
        \node[fill, circle, minimum size=0.2cm] (CircleC1) at (-2,-6)
           [label=right:\protect{$[v_{1} y]$}] {};
                \node[fill, circle, minimum size=0.2cm] (CircleD1) at (-3,-7)
                     [label=right:\protect{$[v_{1} x y]$}] {};
                     \node (text-1) at (-3, -6.45) [label=$-1$] {};
              \node[fill, circle, minimum size=0.2cm] (CircleE1) at (-4,-8) {};
                \node[fill, circle, minimum size=0.2cm] (CircleF1) at (-5, -9) {};


    \draw (CircleD) -- (CircleB) -- (CircleA) -- (CircleC) -- (CircleD)
    -- (CircleE);
    \draw[loosely dotted] (CircleE) -- (CircleF) ;
    \draw (CircleF) -- (CircleA1);
    \draw (CircleD1) -- (CircleB1) -- (CircleA1) -- (CircleC1) -- (CircleD1)
    -- (CircleE1);
        \draw[loosely dotted] (CircleE1) -- (CircleF1) ;
\end{tikzpicture}
\end{center}
Each node in the diagram stands for a nonzero homogeneous component of $L$
in the double grading, and each of those is one-dimensional,
except for the top node, which is drawn just for reference as it represents the component
$L_{(0,0)}=\{0\}$.
Although the term {\em diamond}, assigned to a two-dimensional component
of a thin  algebra, originated from a description of the lattice of ideals,
it somehow fits this graphical depiction as well.
The number $-1$ placed inside the second diamond indicates its type.

As discussed after Definition~\ref{def:type-fake}, leading
to Convention~\ref{convention:fake}, in principle the homogeneous component
of $L$ that immediately follows a fake diamond of type $1$ satisfies the definition
of a diamond of type $0$.
This is illustrated by the following local picture near a fake diamond.

\begin{center}
        \begin{tikzpicture}[scale=1.0]
        \node[fill,draw, circle, minimum size=0.2cm] (DotA) at (0,0) {};
        \node[fill,draw, circle, minimum size=0.2cm] (DotB) at (-1,-1)
             [label=left:$w$] {};
             \node[fill,draw, circle, minimum size=0.2cm] (DotC) at (-2,-2)
                  [label=left:\protect{$[w x]$}] {};
        \node[draw, circle, minimum size=0.2cm] (DotD) at (-3,-3) {};

             \node[draw, circle, minimum size=0.2cm] (DotA1) at (0,-2) {};
        \node[fill,draw, circle, minimum size=0.2cm] (DotB1) at
        (-1,-3)
             [label=right:\protect{$[w x y]$}] {};
        \node[fill,draw, circle, minimum size=0.2cm] (DotC1) at (-2,-4) {};
        \node[fill,draw, circle, minimum size=0.2cm] (DotD1) at
        (-3,-5) {};

        \node (text1) at (-1,-2.25) [label=$1$] {};
       \node (text0) at (-2,-3.25) [label=$0$] {};

 \draw (DotA) -- (DotB) -- (DotC) -- (DotB1) ;
     \draw[dashed] (DotC) -- (DotD) ;

     \draw  (DotB1) -- (DotC1) -- (DotD1) ;
     \draw[dashed] (DotB) -- (DotA1) ;

        \end{tikzpicture}\end{center}

The dashed lines in the picture lead to elements $[wy]$ and $[wxy]$
that are depicted here to reinforce the notion of type of a fake diamond, but are actually zero in $L$.
However, those elements would be nonzero in certain natural central extensions that occur
in the construction of certain Nottingham algebras from finite dimensional algebras
in~\cite{Avi,AviMat:A-Z,AviMat:mixed_types}, occasionally dubbed {\em quasithin}.

We conveniently introduce the {\em support} of a Nottingham algebra $L$ as the set of bidegrees $(r,s)$
such that $L_{(r,s)}$ is nonzero, and thus one-dimensional.
This allows for a simplification of certain calculations in $L$,
which we call the {\em support argument}:
if the result of any Lie bracket calculation in $L$ is known to be a homogeneous element in the double grading
(usually because it is the result of successive Lie brackets performed on homogeneous elements),
and its bidegree lies outside the support of $L$, then we conclude that the result must be zero
without any need to go through the details of the calculation.
The same conclusion can often be drawn even if the expected bidegree of the final result lies in the support of $L$,
as long as one is forced to leave the support at some intermediate stage of the calculation.

Before we clarify the support argument with an example, this seems a good place to introduce a general calculation tool,
the {\em generalized Jacobi identity}
\[
[a[bc^n]]=\sum_{i=0}^{n} (-1)^i \binom{n}{i} [ac^{i}bc^{n-i}].
\]
Two special instances which often occur are $[a[bc^q]]=[abc^q]-[ac^qb]$ (which amounts to $(\ad c)^q$ being a derivation), and
$
[a[bc^{q-1}]]=\sum_{i=0}^{q-1}\;[ac^{i}bc^{q-1-i}],
$
due to $\binom{q-1}{i}\equiv (-1)^i\pmod{p}$.
More generally, the binomial coefficients involved in the generalized Jacobi identity can be efficiently evaluated modulo $p$ by means of Lucas' theorem:
if $q$ is a power of $p$ and  $a,b,c,d$ are non-negative
integers with $b,d<q$, then
$
\displaystyle \binom{aq+b}{cq+d}\equiv \binom{a}{c}\binom{b}{d} \pmod p.
$

We illustrate the support argument by reviewing the proof of a simple but basic fact
on Nottingham algebras.
Suppose that we know the structure of a Nottingham algebra up to degree $q$,
where we expect the second diamond to lie.
Hence the support of $L$ up to degree $q$ consists of the bidegree $(1,0)$, the bidegrees $(i,1)$ with $0\le i<q$, and the bidegree $(q-2,2)$.
Then the following calculation establishes that the second diamond must have type $-1$:
\begin{align*}
0&=[yx^{(q-1)/2}[yx^{(q-1)/2}]]
=\sum_{i=0}^{(q-1)/2} (-1)^i \binom{(q-1)/2}{i} [yx^{(q-1)/2+i}yx^{(q-1)/2-i}]
\\&=
\pm\left(\binom{(q-1)/2}{(q-3)/2} [v_1yx]
-\binom{(q-1)/2}{(q-1)/2} [v_1xy]\right)
\\&=
\pm\left(\frac{q-1}{2} [v_1yx]
-[v_1xy]\right),
\end{align*}
whence
$[v_1yx]=-2[v_1xy]$.
Here each term of the summation except for the last two vanishes because some initial portion of the iterated Lie bracket falls outside
the support.

The support of several families of Nottingham algebras $L$ (namely, those described in Section~\ref{sec:regular})
has a simple description in terms of a
pair of inequalities that depend on the parameter $q$ of $L$.
We set
\[
S_\le(q)=\{(r,s)\in\Z^2:r,s>0,\ -1\le(q-2)s-r\le q-2\},
\]
and $S_<(q)$ defined similarly but with strict inequalities.
We say that a Nottingham algebra $L$ is {\em regular}
if its support is contained in $S_\le(q)$, and {\em irregular} otherwise.

As it turns out, the irregular Nottingham algebras are
the algebras $\mathcal{N}(q,r)$, $\mathcal{T}_{q,1}(M)$ and $\mathcal{T}_{q,2}(M)$, constructed in David Young's thesis~\cite{Young:thesis}
from certain algebras of maximal class $M$.
We will describe Young's constructions in Section~\ref{sec:further}.
Any other Nottingham  algebra $L$ with parameter $q$ has support containing
$S_{<}(q)$ and contained in $S_{\le}(q)$.
If, in addition, $L$ has no fake diamonds, then its support equals $S_{\le}(q)$.

We conclude this section with noting that, because a Nottingham algebra $L$ is finitely generated,
the algebra of derivations $\Der(L)$ inherits both a $\Z$-grading and
a $(\Z\times\Z)$-grading from the natural grading and the double grading of $L$.
A bigraded derivation of $L$ that is not inner will play a role in
Proposition~\ref{prop:D}, where $L$ belongs to a special class of Nottingham algebras.

\section{Regular Nottingham algebras}\label{sec:regular}

In this section we present a description of all the regular Nottingham algebras that has been achieved over
a number of papers, and discuss a portion of their classification.
The material of this section is lifted from~\cite{AviMat:diamond_distances}.

In various cases where fake diamonds occur there is a natural choice, between
calling $L_m$ a diamond of type $1$, or $L_{m+1}$ a diamond of type $0$,
that makes all diamonds (including the fake ones) occur at regular distances
with a difference of $q-1$ in degrees;
this was formalized in Section~\ref{sec:grading} as they being regular.
We illustrate that through the following existence result, which will be clarified and
supplemented with uniqueness statements in commentaries to follow.

\begin{theorem}[Theorem~7 in ~\cite{AviMat:diamond_distances}]\label{thm:regular}
There exist Nottingham algebras $L$ with second diamond $L_q$,
where (possibly fake) diamonds occur in  each degree congruent to $1$ modulo $q-1$, and have types described by any of the following patterns:
\begin{itemize}
\item[(a)]
all diamonds of type $-1$~\cite{Car:Nottingham};
\item[(b)]
all diamonds of finite types following a non-constant arithmetic progression~\cite{Avi,AviMat:A-Z};
\item[(c)]
all diamonds of infinite type except for those in degrees $\equiv q\pmod{p^s(q-1)}$ for some $s>0$, which have type $-1$~\cite{AviMat:A-Z};
\item[(d)]
all diamonds of infinite type except for those in degrees $\equiv q\pmod{p^s(q-1)}$ for some $s>0$, which have finite types following a non-constant arithmetic progression~\cite{AviMat:mixed_types};
\item[(e)]
all diamonds of infinite type except for $L_q$.
\end{itemize}
\end{theorem}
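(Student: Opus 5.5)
Theorem~\ref{thm:regular} is an existence statement collecting results from the cited papers, so the plan is to produce, for each pattern (a)--(e), an explicit Nottingham algebra realizing it. The common method is to take a cyclically graded simple Lie algebra of Cartan type, or a loop algebra built from it, and pass to a suitable positively graded thin subalgebra. Concretely, one takes a finite-dimensional simple algebra $S$ of Cartan type: the Zassenhaus (Witt) algebra $W(1;n)$ for patterns (a) and (b), and a Hamiltonian algebra $H(2;\underline{n};\Phi)$ for patterns (c) and (d). One then chooses a grading of $S$ over $\Z/N\Z$ with respect to a suitable torus, forms the loop algebra $S\otimes F[t]$, and considers the subalgebra $L$ generated by two elements $x,y$ of degree one. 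Next one verifies thinness and locates the diamonds: computing $[yx^{i}]$ in a divided-power basis shows that $y$ centralizes everything except in degrees $\equiv 0,1\pmod{q-1}$, and that diamonds appear exactly in degrees $\equiv 1 \pmod{q-1}$.

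The key steps, in order, are as follows.

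First, for (a), one uses the graded Lie algebra of the Nottingham group, or equivalently a grading of $W(1;1)\otimes F[t]$ with $x=\partial$-type and $y=t\,x^{(q)}$-type elements. There the relations \eqref{eq:second_diamond_type} are preserved by a periodicity automorphism of degree $q-1$, so every diamond has type $-1$.

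Second, for (b), one twists the grading by a parameter. The types then change by a fixed increment at each step, giving an arithmetic progression. When the progression passes through $0$ or $1$, one checks directly that the corresponding component is a fake diamond in the sense of Definition~\ref{def:type-fake}, and one uses Convention~\ref{convention:fake} to keep the spacing exactly $q-1$.

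Third, for (c) and (d), one works in $H(2;(1,s+1);\Phi)$. Elements of the loop algebra whose degree lies off the sublattice $q+p^s(q-1)\Z$ act through a component on which the relevant bracket produces $[wyx]=-[wxy]$, which is infinite type. At degrees in that sublattice, the divided-power exponent wraps around and produces type $-1$ in case (c), or an arithmetic progression in case (d).

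Fourth, for (e), one takes the limit $s\to\infty$ of (c). Alternatively, one builds $L$ directly as an inverse limit of the finite-dimensional quotients, since Theorem~\ref{thm:distance} guarantees that the pattern can be continued indefinitely once consistency is checked.

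The main obstacle is verifying the covering property together with the exact diamond relations in every degree, not just the first few. In particular one must show $[Lyy]=0$ and that the type relation of Definition~\ref{def:type} holds for each $w$ spanning a component $L_{m-1}$. The approach is to check these relations on a set of generators of the grading (the elements $v_k$ preceding each diamond), using the support argument of Section~\ref{sec:grading} to kill most terms in the generalized Jacobi identity. The remaining two terms are then evaluated with Lucas' theorem, exactly as in the computation of the type $-1$ of $L_q$.

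For (e), where no finite Cartan-type model is periodic, one would first try to show that the candidate presentation, namely all diamonds of infinite type after $L_q$, is consistent. This amounts to exhibiting an outer derivation or a central extension at each step so that the finite quotients lift indefinitely.
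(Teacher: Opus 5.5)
\textbf{There are genuine gaps.} The paper does not prove this statement; it quotes it. It credits (a) to a cyclic grading of Zassenhaus algebras \cite{Car:Nottingham}, (b)--(d) to thin loop algebras of larger Cartan-type algebras \cite{Avi,AviMat:A-Z,AviMat:mixed_types}, and (e) to an inverse limit of (c) or (d) as $s\to\infty$. Your outline has this overall shape, but several of the concrete steps you commit to would fail.

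\textbf{(b) cannot come from $W(1;n)$ with $q=p^n$.} Take $L=\bigoplus_{i>0}S_{\bar i}\otimes t^i$ graded modulo $N$. Multiplication by $t^N$ maps $L_i$ onto $L_{i+N}$ and commutes with $\ad x$ and $\ad y$, so diamond types are $N$-periodic. Also, any $N$ consecutive components have total dimension $\dim S$. Now suppose $\dim S=q$. A window of $N$ degrees containing $L_q$ gives $N+1\le q$. On the other hand, $L_{q+N}$ is again two-dimensional, and by Theorem~\ref{thm:distance} there is no genuine diamond strictly between $L_q$ and $L_{2q-1}$, so $N\ge q-1$. Hence $N=q-1$, and every diamond has the type $-1$ of $L_q$. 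So no twisting of a grading of $W(1;n)$ gets you beyond (a). A non-constant arithmetic progression has period $p$ in characteristic $p$, so one period of (b) carries about $pq$ dimensions. This is why \cite{Avi,AviMat:A-Z} use Hamiltonian (Albert--Zassenhaus type) algebras of that size. Separately, in (a) you need $W(1;n)$, not $W(1;1)$, unless $q=p$.

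\textbf{(c) and (d): the required gradings are not supplied.} For these cases the whole content of an existence proof is exhibiting the cyclic gradings. Writing \emph{a suitable torus} plus a wrap-around heuristic does not provide them. As the paper notes, producing these gradings required new tools: generalized exponentials of derivations (Artin--Hasse exponentials, Laguerre polynomials), used for grading switching \cite{Mat:Artin-Hasse,AviMat:Laguerre,AviMat:gradings}. Your specific choice $H(2;(1,s+1);\Phi)$ is also too small in general. Its dimension is at most $p^{s+2}$, while the same periodicity count shows that one period of (c) needs $p^s(q-1)+p^s=p^sq$ dimensions, which is larger once $q>p^2$.

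\textbf{(e): your fallback route is circular.} Theorem~\ref{thm:distance} constrains Nottingham algebras that are already known to exist. It says nothing about whether a prescribed truncated pattern extends. Similarly, the support argument presupposes the very support you are trying to establish. In an explicit model the relations must be checked by direct computation, or through a bigrading of the ambient algebra as after Proposition~\ref{prop:MtoL}.

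Your first option for (e), the limit of (c) as $s\to\infty$, is exactly what the paper says. The paper also yields an explicit model. In Young's $\mathcal{T}_{q,2}(M)$ of Proposition~\ref{prop:MtoL}, take $M$ to be the metabelian algebra of maximal class. Then $[U_iY]=0$ for all $i$. The computation in the sketch of that proposition then shows that every diamond past $L_q$ has infinite type, which is pattern (e).
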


Note that all algebras of Theorem~\ref{thm:regular} exhibit a periodic pattern, not only in terms of diamond distances, but also of their types;
this periodicity is stronger than those algebras just being regular.

Nottingham algebras as in case~(a) of Theorem~\ref{thm:regular},
thus with all diamonds having type $-1$,
were explicitly constructed in~\cite{Car:Nottingham},
using a certain cyclic grading of Zassenhaus algebras.
The special case where $q=p$ is the graded Lie algebra associated with the lower central series of the Nottingham group, thus justifying their name.
They were also shown in~\cite{Car:Nottingham} to be uniquely determined by some finite-dimensional quotient.
Here and in certain other cases such `uniqueness' was proved by exhibiting a finite presentation for some central extension of $L$.
(In most cases $L$ is not itself finitely presented.)

Regarding case~(b) of Theorem~\ref{thm:regular},
Nottingham algebras including fake diamonds were first observed in~\cite{CaMa:Nottingham}).
More precisely, finite presentations for certain central extensions (and second-central in one case) of Nottingham algebras were given
(with a couple of exceptions due to the existence of algebras
$\mathcal{L}_{1,q}$, $\mathcal{N}(q,r)$, $\mathcal{T}_{q,1}(M)$,
which we discuss in Section~\ref{sec:further}),
where the diamonds occur in all degrees congruent to $1$ modulo $q-1$,
and their types follow a non-constant arithmetic progression.
If that arithmetic progression passes through $0$, that is, if it runs through the prime field,
then those diamonds include fake diamonds, of both types $0$ and $1$.
Such Nottingham algebras were explicitly constructed, thus proving their existence,
in~\cite{Avi} in case all types belong to the prime field, and in~\cite{AviMat:A-Z} otherwise.
Those constructions used certain finite-dimensional simple modular Lie algebras of Cartan type,
and certain gradings of them over a finite cyclic group.

Nottingham algebras where the third diamond has infinite type include those of cases~(c) and~(d).
Again, their constructions in~\cite{AviMat:A-Z} and~\cite{AviMat:mixed_types} used
certain finite-dimensional simple modular Lie algebras of Cartan type, but special tools involving generalized exponentials of derivations
had to be developed for producing the required gradings,
in~\cite{Mat:Artin-Hasse,AviMat:Laguerre,AviMat:gradings}.
Uniqueness of those Nottingham algebras was established in~\cite{AviMat:earliest}.

The algebras of Case~(e) can be obtained from those of Case~(c) or~(d)
by letting the parameter $s$ go to infinity;
this can be formalized by a standard inverse limit construction.

In all cases of Theorem~\ref{thm:regular}, each homogeneous component which is not a diamond or immediately precedes a diamond is centralized by $y$.
Together with this information,
the locations and types of all diamonds,
as specified in each case of Theorem~\ref{thm:regular},
give a complete description of those Nottingham algebras.
Note that each of the Nottingham algebras of Theorem~\ref{thm:regular} has diamonds in each degree congruent to $1$ modulo $q-1$.
In particular, the distance between consecutive diamonds in those particular Nottingham algebras
is invariably $q-1$, provided that we assign
an appropriate type $0$ or $1$ to each fake diamond,
making use of Convention~\ref{convention:fake}.

\section{The other families of Nottingham algebras}\label{sec:further}

We mentioned in Section~\ref{sec:regular} that uniqueness of all Nottingham algebras $L$
of Theorem~\ref{thm:regular} has been established in all cases except for two instances,
belonging to Case~(b) of Theorem~\ref{thm:regular}.
Recall that in that case of Theorem~\ref{thm:regular} the diamonds of $L$ occur in all degrees congruent to $1$ modulo $q-1$, and their types follow an arithmetic progression.
Such arithmetic progression is uniquely determined by the type of the third diamond $L_{2q-1}$
(which must be finite, otherwise one ends up in Cases (c) or (d) of Theorem~\ref{thm:regular}).
However, when that type is $1$ or $0$, and hence the diamond $L_{2q-1}$ is fake,
$L$ is not uniquely determined as a Nottingham algebra by those prescriptions.

In fact, there are plenty of Nottingham algebras that have no genuine diamond in degree $2q-1$;
in a sense those are actually the majority of Nottingham algebras.
In his PhD thesis~\cite{Young:thesis} David Young described all of them, and managed to prove a classification result
that we quote as Theorem~\ref{thm:long_second_chain}.
In the rest of this section we present concise descriptions of Young's algebras.

\subsection{The algebras $\mathcal{L}_{1,q}$ and $\mathcal{L}_{0,q}$}\label{subsec:L(q)}\label{subsec:Ell}

For each power $q$ of the characteristic, two Nottingham algebras
$\mathcal{L}_{1,q}$ and $\mathcal{L}_{0,q}$ were described in~\cite{Young:thesis}
that have precisely two genuine diamonds, namely $L_1$ and $L_q$, the latter necessarily of type $-1$.
Past their second diamond $L_q$, each of them has a fake diamond in each degree $m>q$ with $m\equiv -1\pmod{q}$,
but those fake diamonds are all of type $1$ in case of $\mathcal{L}_{1,q}$,
and all of type $0$ in case of $\mathcal{L}_{0,q}$.
Thus, in both algebras we observe distances of $q$ rather than $q-1$ between certain fake consecutive diamonds of the same type (either both $0$ or both $1$).
However, we may use the ambiguity of fake diamonds
expressed in Convention~\ref{convention:fake} to our advantage.
For example, in the case of $\mathcal{L}_{1,q}$
(the other case being analogous),
suppose $L_{m-q}$, $L_{m}$ and $L_{m+q}$ are consecutive fake diamonds of type $1$.
We may reinterpret the first of these three fake diamonds
as $L_{m-q+1}$ having type $0$, which has distance $q-1$
from $L_{m}$, fake of type $1$.
Then we may reinterpret the middle diamond as
$L_{m+1}$ having type $0$, which has distance $q-1$
from $L_{m+q}$, fake of type $1$.
Thus, allowing this double interpretation of the middle diamond
avoids contradicting the general statement that any two given consecutive diamonds in a
Nottingham algebra have distance $q-1$ when suitably computed in case of fake diamonds.

We might mention that  $\mathcal{L}_{1,q}$ and $\mathcal{L}_{0,q}$
are algebras of coclass two.
The {\em coclass} of a finitely generated, residually nilpotent Lie algebra is
\[
\sup\{\dim(L/L^{k+1})-k:k\ge 1\}.
\]
With this definition, the algebras of maximal class
(see Section~\ref{sec:construction} for details)
are precisely the algebras of coclass one.

\subsection{The Nottingham deflations $\mathcal{N}(q,r)$}\label{subsec:N(q,r)}
For a (positively) graded Lie algebra $L$ one may define its {\em deflation} $L^{\downarrow}$ as the algebra generated by $(\ad L_1)^p$ and $L_p$,
with its degree function rescaled by dividing each degree by $p$.
This tool was introduced in~\cite{CMN} for the study of  algebras of maximal class.

Now suppose $L$ is $\mathcal{N}(q)$, the Nottingham algebra introduced in~\cite{Car:Nottingham}.
In the case $q=p$, where $N(p)$ is simply the graded Lie algebra associated to the lower central series of the Nottingham group over the field $\mathbb{F}_p$,
we have $N(p)^{\downarrow}\cong N(p)$.
However, if $q>p$ then $\mathcal{N}(q)^{\downarrow}$ is a Nottingham algebra with second diamond in degree $q/p$, which Young denoted by $\mathcal{N}(q/p,p)$,
and will only be isomorphic to $\mathcal{N}(q/p)$ when $q=p^2$.
More generally, applying deflation $j$ times to $\mathcal{N}(qr,1)=\mathcal{N}(qr)$, where $r=p^j>1$, gives an algebra $\mathcal{N}(q,r)$, which has second diamond in degree $q$.
This repeated deflation, which can also be applied in one go as described in~\cite[Section~3.1]{Young:thesis},
cycles through a finite number of nonisomorphic algebras, because $N(p,r)^{\downarrow}$ is isomorphic to $N(pr,1)$.

Now $\mathcal{N}(q)$, which represents Case~(a) in Theorem~\ref{thm:regular}, has its diamonds in each degree congruent to $1$ modulo $q-1$, all of type $-1$
(except the first, whose type we have agreed to leave undefined).
The Nottingham deflation $\mathcal{N}(q,r)$ has its genuine diamonds,
all of type $-1$,
in all degrees congruent to $q$ modulo $qr-1$.
However, if $r>1$ it also has fake diamonds
between each pair of genuine diamonds,
at locations which make it not regular.
In particular, if $r>1$ then $\mathcal{N}(q,r)$ has a fake diamond of type $1$ in degree $2q-1$.

One may regard $\mathcal{L}_{1,q}$ as a limit case $\mathcal{N}(q,\infty)$.

\subsection{The algebras $\mathcal{T}_{q,1}(M)$ and $\mathcal{T}_{q,2}(M)$ obtained from algebras of maximal class}\label{subsec:T(M)}
All Nottingham algebras of Theorem~\ref{thm:regular}, as well as those of Subsections~\ref{subsec:L(q)} and~\ref{subsec:N(q,r)} have a periodic structure.
This is far from being a universal characteristic of Nottingham algebras, and is rather a rare feature.
In fact, in his PhD thesis~\cite{Young:thesis}
David Young gave two procedures which allow
one to produce two Nottingham algebras
$\mathcal{T}_{q,1}(M)$ and $\mathcal{T}_{q,2}(M)$, both with second diamond $L_q$,
starting from any given  algebra $M$ of maximal class
{\em with at most two distinct two-step centralizers} (see~\cite{CMN,CN} for the latter class, or Section~\ref{sec:construction}).
The diamond patterns of
$\mathcal{T}_{q,1}(M)$
and
$\mathcal{T}_{q,2}(M)$
reflect the pattern of
two-step centralizers of $M$, in two different ways.
Because, over any given field of characteristic $p$,
there are uncountably many
such algebras $M$, and most of them are not periodic,
corresponding assertions carry over to the class of Nottingham algebras, over a given field of characteristic $p$ and for a fixed power $q$ of $p$.

Both algebras
$\mathcal{T}_{q,1}(M)$
and
$\mathcal{T}_{q,2}(M)$
have second diamond $L_q$ (of type $-1$ as always),
and all remaining diamonds are fake or have infinite type.

We start with $\mathcal{T}_{q,2}(M)$,
which is the one relevant to this paper.
Diamonds of infinite type in $L=\mathcal{T}_{q,2}(M)$
occur in sequences of lengths dictated
in a certain way by the structure of $M$,
separated by single occurrences of fake diamonds.
Diamonds of infinite type in these sequences
occur at degree differences of $q-1$.

However, if $L_m$ is a diamond ending any such sequence,
then $L_{m+q-1}$ is a (fake) diamond of type $1$,
and then $L_{m+2q-1}$ begins the next sequence
of diamonds of infinite type.
Thus, the degree difference between $L_{m+q-1}$ and $L_{m+2q-1}$
equals $q$, rather than $q-1$ as in the examples
from Theorem~\ref{thm:regular} which we discussed earlier.
However, if we make use of the intrinsic ambiguity in the definition of fake diamonds,
and view $L_{m+q}$ as a diamond of type $0$, then that has distance
$q-1$ from the next diamond $L_{m+2q-1}$.
In conclusion, the existence of $\mathcal{T}_{q,2}(M)$
does not contradict our general claim
that the distance between two consecutive diamonds
of a Nottingham algebra may always be interpreted
to be equal to $q-1$,
provided that in the presence of fake diamonds
we allow ourselves to choose which component we call the fake diamond,
according to Convention~\ref{convention:fake}.

We stress that, differently from the algebras considered in
Theorem~\ref{thm:regular}, a fake diamond in $\mathcal{T}_{q,2}(M)$
ought to be interpreted
in two different ways (with the corresponding shift by one in degree),
according to which distance we intend to measure
(whether from the previous or to the next diamond).

For the sake of completeness, and in support of quoting Young's
Theorem~\ref{thm:long_second_chain},
we briefly describe the algebras
$\mathcal{T}_{q,1}(M)$.
In contrast with the other class, most diamonds of
$\mathcal{T}_{q,1}(M)$ are fake, interrupted by single occurrences of diamonds
of infinite type (with the necessary exception of $L_q$,
which has type $-1$ as always).
Here, the structure of the underlying algebra of maximal class $M$ dictates,
in a certain way, the lengths of those strings of fake diamonds
between diamonds of infinite type.

Two consecutive fake diamonds in $\mathcal{T}_{q,1}(M)$
occur at degree difference of $q$ if they are interpreted as having
the same type, hence both $1$ or $0$.
Also, the first in such a string of fake diamonds, when read as of type $1$,
occurs at a degree $q-1$ higher
than the preceding genuine diamond
(or, equivalently, degree $q$ higher when read as of type $0$);
analogously,
the last diamond of such a string, when read as of type $0$,
is at a difference degree of $q-1$ from the following genuine diamond.

Thus, we have a situation similar to the one already discussed in Subsection~\ref{subsec:Ell}
for the algebras $\mathcal{L}_{1,q}$:
the general rule of diamonds occurring at a degree distance of $q-1$,
to be satisfied,
requires each fake diamond to be read in two ways,
according to whether one measures the distance from the previous diamond
or to the next.

It may be worth noting that the locations of fake diamonds
in both algebras $\mathcal{T}_{q,1}(M)$
and $\mathcal{T}_{q,2}(M)$ satisfy the following condition on centralizers,
which featured prominently in~\cite{Young:thesis}:
if neither $L_i$ nor $L_{i+1}$ is a genuine diamond, then
\[
C_{L_1}(L_i)=
\begin{cases}
F x&\text{if $d\equiv -1\pmod{q}$}
\\
F y&\text{otherwise},
\end{cases}
\]
where $L_{i-d}$ is the latest genuine diamond preceding $L_i$.

\color{black}

\section{Irregular Nottingham algebras}\label{sec:irregular}
The main goal of this paper is to prove the following theorem,
which constitutes a complete classification of Nottingham algebras.

\begin{theorem}\label{thm:irregular}
Let $L$ be a Nottingham algebra, with second diamond $L_q$.
Then $L$ is isomorphic  
\begin{itemize}
\item[(1)]
to one of the regular algebras described in Cases (a)--(e) of Theorem~\ref{thm:regular}, or
\item[(2)] to one of the following irregular algebras:
\begin{itemize}
\item[(f)]
one of the algebras $\mathcal{L}_{1,q}$, $\mathcal{L}_{0,q}$ of coclass $2$;
\item[(g)]
a Nottingham deflation $\mathcal{N}(q,r)$ for some $p$-power $r>1$;
\item[(h)]
$\mathcal{T}_{q,1}(M)$ or $\mathcal{T}_{q,2}(M)$,
for some  algebra $M$ of maximal class with two distinct two-step centralizers.
\end{itemize}
\end{itemize}
\end{theorem}
A portion of the classification work was attained in David Young's Thesis~\cite{Young:thesis},
and concerned the special case where the next genuine diamond after $L_q$ occurs later than the expected degree $2q-1$.
The terminology in use at the time, which predated a full study of most of the Nottingham algebras of Theorem~\ref{thm:regular}, did not include fake diamonds.
Instead, it spoke of a {\em long second chain,} as in the title of~\cite{Young:thesis},
referring to the sequence of one-dimensional components
comprised between the second diamond $L_q$ and the next genuine diamond.
This expression being now outdated, in Section~\ref{sec:further} we have described in terms of current terminology
all Nottingham algebras discovered by Young.

For reference, Young's classification of {\em thin  algebras with long second chain} reads as follows, where the case labelling matches
corresponding cases in Theorems~\ref{thm:regular} and Theorem~\ref{thm:irregular}, or special cases of them.

\begin{theorem}[Theorem~5.7 in~\cite{Young:thesis}]\label{thm:long_second_chain}
Let $L$ be a Nottingham algebra, with second diamond $L_q$, such that the diamond $L_{2q-1}$ is not genuine.

Then $L$ is isomorphic to one of the following:
\begin{itemize}
\item[($\mathrm{b}'$)]
the algebra from~\cite{CaMa:Nottingham} with $\mu_3=1$, $\mu_4=3$ (in which case the third diamond occurs in degree $3q-2$);
\item[($\mathrm{b}''$)]
the algebra from~\cite{CaMa:Nottingham} with $\mu_3=0$, $\mu_4=1$, $\mu_5=2$ (in which case the third
diamond occurs in weight $4q-3$);
\item[(f)]
one of the algebras $\mathcal{L}_{1,q}$, $\mathcal{L}_{0,q}$ of coclass $2$;
\item[(g)]
a Nottingham deflation $\mathcal{N}(q,r)$ for some $p$-power $r>1$;
\item[($\mathrm{h}'$)]
$\mathcal{T}_{q,1}(M)$ for some algebra $M$ of maximal class with two distinct two-step centralizers.
\end{itemize}
\end{theorem}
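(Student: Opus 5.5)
The plan is to read off the diamond pattern of $L$ past $L_q$ step by step, using Theorem~\ref{thm:distance} together with the support argument in the double grading of Section~\ref{sec:grading}. Since $L_q$ is a genuine diamond of type $-1$, Theorem~\ref{thm:distance}(a) says that $L_{2q-1}$ is a diamond. So the hypothesis means that $L_{2q-1}$ is fake, either of type $1$ or, reading it at degree $2q$, of type $0$. By Theorem~\ref{thm:distance}(d), $y$ centralizes $L_{q+1},\dots,L_{2q-3}$. Hence $L$ is known explicitly up to about degree $2q$, and we can compute the next relations directly. For that we evaluate $[a[yx^{k}]]$ and $[a[v_1 x^{j}y]]$ for suitable homogeneous $a$ by the generalized Jacobi identity, simplifying the binomial coefficients with Lucas' theorem. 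Most terms vanish by the support argument.

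We then follow the string of fake diamonds that starts at $L_{2q-1}$. Applying Theorem~\ref{thm:distance}(b),(c) repeatedly, each successive fake diamond lies at distance $q-1$ or $q$ from the previous one. We record a word in $\{q-1,q\}$, or equivalently the sequence of centralizers $C_{L_1}(L_i)\in\{Fx,Fy\}$, as in the condition displayed at the end of Section~\ref{sec:further}. There are three possibilities.
\begin{itemize}
\item[(i)] The string never ends. A Jacobi computation across two consecutive fake diamonds should force all distances to equal $q$ and all types to be equal. This gives $\mathcal{L}_{1,q}$ or $\mathcal{L}_{0,q}$.
\item[(ii)] The string ends at a genuine diamond of finite type. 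Another computation should force that type. If the type is $3$ or $2$, the finitely many relations of~\cite{CaMa:Nottingham} apply and give ($\mathrm{b}'$) or ($\mathrm{b}''$). If the type is $-1$, the string must have the periodic shape of $\mathcal{N}(q,r)$, with genuine diamonds at distance $qr-1$, and we identify $L$ by comparing with the deflation description of Subsection~\ref{subsec:N(q,r)}.
\item[(iii)] The string ends at a diamond of infinite type. We expect this case to lead to $\mathcal{T}_{q,1}(M)$.
\end{itemize}

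Case (iii) is the main obstacle. There we will build an algebra $M$ of maximal class directly from $L$. Each block of $L$ lying between consecutive diamonds contributes one homogeneous component of $M$. The two-step centralizer of $M$ in that degree is declared to be one of two fixed lines, chosen according to whether the block is a fake diamond string at distance $q$ or is separated by a genuine infinite-type diamond.

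The hard part is to prove that this combinatorial data really defines an algebra of maximal class with at most two distinct two-step centralizers. For this we will first try to realise $M$ inside $L$: we take the subalgebra generated by $(\ad x)^{q-1}$-type elements, or a suitable deflation-like operation, in analogy with the deflation of~\cite{CMN}, and check the covering property of $M$ from that of $L$. If this succeeds, the results of~\cite{CMN,CN} restrict the constituent lengths of $M$. It then remains to show that $L$ is determined by $M$, by induction on degree. Each new relation of $L$ should follow from a relation of $M$ via a single generalized Jacobi computation and the support argument, which identifies $L\cong\mathcal{T}_{q,1}(M)$ and completes case ($\mathrm{h}'$).
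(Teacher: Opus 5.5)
The paper gives no proof of this theorem; it quotes it from Young's thesis. The natural yardstick is therefore the paper's own treatment of the parallel class $\mathcal{T}_{q,2}$ in Sections~\ref{sec:construction} and~\ref{sec:mixed]}. Your overall architecture parallels that treatment: follow the fake strings, pass through an algebra of maximal class, then identify $L$ by its diamond pattern. Measured against it, however, your case (iii) has a genuine gap at exactly the step you call the hard part.

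\textbf{You cannot realise $M$ inside $L$.} Elements of $(\ad x)^{q-1}$-type lie in $C_L(x)$, because $(\ad x)^q=0$, and so does the subalgebra they generate. In Young's algebras, $C_L(x)$ is spanned by $x$ and the elements $U_i\otimes 1$, which reproduce only a maximal ideal of $M$, of codimension one. The paper points this out for $\mathcal{T}_{q,2}(M)$ right after Proposition~\ref{prop:MtoL}, and the same phenomenon occurs for $\mathcal{T}_{q,1}(M)$.

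The missing generator of $M$ acts on $L$ as a derivation $D$ of degree $q-1$, with $Dx=0$ and $Dy$ a nonzero multiple of $[v_1y]$. This $D$ is not inner: $L_{q-1}=Fv_1$ and $[x,v_1]\neq 0$. So no subalgebra of $L$, deflation-like or otherwise, supplies it, and $M$ must be built inside $L+FD$.

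The key step you are missing is to prove that such a $D$ exists for an arbitrary $L$ of your case (iii). Concretely, the derivation of the free Lie algebra given by $Dx=0$, $Dy=[v_1y]$ must map the ideal of relations of $L$ into itself. This has to be checked diamond by diamond using the whole diamond pattern, as in Proposition~\ref{prop:D}.

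Merely declaring two-step centralizers from the combinatorics of $L$ does not produce a Lie algebra, since not every sequence of centralizers is realised by an algebra of maximal class. Invoking~\cite{CMN,CN} once $M$ exists adds nothing. Before $M$ exists, those constraints are precisely what you would have to prove about $L$.

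\textbf{The global pattern is assumed, not proved.} Both the derivation argument and your block-by-block description of $M$ presuppose the global diamond pattern, but your case split only examines the first string of fake diamonds after $L_q$.
\begin{itemize}
\item In case (iii) you must first show that every later genuine diamond has infinite type and is isolated, and that every later string consists of fake diamonds at mutual distance $q$. This is the analogue of Theorem~\ref{thm:uniqueness}, which occupies all of Section~\ref{sec:mixed} for $\mathcal{T}_{q,2}$.
\item In case (ii) with type $-1$ you must show that the string length is governed by a power $r>1$ of $p$, and that the same pattern recurs after every diamond of type $-1$, not just the first.
\item You must determine which types, at which positions, can end the first string: type $3$ only at $3q-2$; type $2$ only at $4q-3$, after fake diamonds of types $0$ and $1$ in degrees $2q-1$ and $3q-2$; and otherwise only $-1$ or $\infty$.
\end{itemize}
These Jacobi computations are the actual content of Young's theorem, and your proposal replaces them with ``should force'' and ``we expect''.

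A smaller point concerns your opening sentence. It describes a single case, since $L_{2q-1}$ fake of type $1$ is the same as $L_{2q}$ of type $0$. It misses the case where $L_{2q-1}$ is fake of type $0$, equivalently $L_{2q-2}$ fake of type $1$. That is the case leading to $\mathcal{L}_{0,q}$ and to ($\mathrm{b}''$).
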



In the notation of~\cite[Section~2]{CaMa:Nottingham},
the label $\mu_i$ of Cases~($\mathrm{b}'$) and~($\mathrm{b}''$)
denotes the type of a diamond in degree $(i-1)(q-1)+1$
for a regular algebra.
The irregular algebras $\mathcal{L}_{1,q}$, $\mathcal{N}(q,r)$ and $\mathcal{T}_{q,1}(M)$
may be interpreted as having $\mu_3=1$ and $\mu_4=0$
in the notation of~\cite[Section~2]{CaMa:Nottingham},
while $\mathcal{L}_{0,q}$ would have $\mu_3=0$ and $\mu_4=1$.

After Young's classification Theorem~\ref{thm:long_second_chain}, it remained to discover, and then classify all those Nottingham algebras
where $L_{2q-1}$ is a genuine diamond.
Theorem~\ref{thm:irregular} is a direct consequence of the following theorem.

\begin{theorem}\label{thm:classification}
Let $L$ be a Nottingham algebra, with second diamond $L_q$, such that $L_{2q-1}$ is a genuine diamond.
Then 
\begin{itemize}
\item either $L$ is isomorphic to one of the Nottingham algebras of Theorem~\ref{thm:regular},
\item 
or $L$ is isomorphic to
$\mathcal{T}_{q,2}(M)$ for some   algebra $M$ of maximal class.
\end{itemize}
\end{theorem}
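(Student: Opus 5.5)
Since $L_{2q-1}$ is a genuine diamond, it has a type $\mu_3\in\F\cup\{\infty\}$ with $\mu_3\neq 0,1$. Repeated use of Theorem~\ref{thm:distance}(a) then places the subsequent diamonds at degrees congruent to $1$ modulo $q-1$, for as long as they remain genuine. The plan is to split according to $\mu_3$ and to the type of the first fake diamond, if one occurs.

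\emph{Case $\mu_3$ finite.} I would show, by induction along the diamonds $L_{k(q-1)+1}$, that the types follow the arithmetic progression $\mu_k=\mu_3+(k-3)(\mu_3-\mu_2)$. Each inductive step applies the generalized Jacobi identity to elements such as $[v_k[yx^{q-1}]]$ and uses the support argument in the double grading of Section~\ref{sec:grading}, as in the computation giving type $-1$ for $L_q$. When the progression passes through $1$ and $0$, the fake diamonds satisfy the hypotheses of Theorem~\ref{thm:distance}(b),(c), and case (c) forces the regular spacing to continue. This yields case~(b) of Theorem~\ref{thm:regular}. Uniqueness follows because a Nottingham algebra is determined by the degrees and types of its diamonds, as remarked after Theorem~\ref{thm:distance}.

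\emph{Case $\mu_3=\infty$.} Consider the maximal run of consecutive diamonds of infinite type after $L_q$, say up to $L_m$. The next diamond $L_{m+q-1}$ either has finite type or is fake.
\begin{itemize}
\item If its type is finite and nonzero, a counting argument with binomial coefficients, via Lucas' theorem applied to brackets with $[yx^{q-1}]$-type elements, should force the run length to be $p^s-1$. This gives the periodic patterns (c) or (d), together with an arithmetic progression among the finite types.
\item If no such diamond ever occurs, we are in case (e).
\item If $L_{m+q-1}$ is fake of type $1$, then by Theorem~\ref{thm:distance}(b) the next diamond is $L_{m+2q-2}$ or $L_{m+2q-1}$. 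The latter is precisely the $\mathcal{T}_{q,2}$ phenomenon.
\end{itemize}
I would have to rule out that the two behaviours mix: once a fake diamond occurs at shift $q$, all later deviations must also be of this kind, and no finite non-fake types may appear afterwards.

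In that last situation, I would encode the pattern, namely the lengths of the runs of infinite-type diamonds separated by single fake diamonds, as a sequence describing the two-step centralizers of an algebra of maximal class $M$ with two distinct two-step centralizers. I would then check that the constraints on run lengths forced in $L$ match the constraints known for $M$ from~\cite{CMN,CN}. Here I would lean on the construction in Section~\ref{sec:construction} and on Theorem~\ref{thm:uniqueness}, which say that $\mathcal{T}_{q,2}(M)$ exists and is determined by a finite quotient. With these in hand, $L\cong\mathcal{T}_{q,2}(M)$ follows by induction on degree, comparing diamond data.

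The main obstacle is this last correspondence. One has to show that every admissible diamond pattern of $L$ with infinite third diamond and some fake diamond comes from some $M$; in other words, that the constraints on $L$ are as strong as those on $M$. I would attempt it by carrying out in $L$, through the bigrading and generalized Jacobi identities, computations parallel to those proving the structure theorems for two-step centralizers in~\cite{CMN}. The technical lemmas of Section~\ref{sec:gc} would serve as the dictionary between the two settings.
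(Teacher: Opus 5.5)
\paragraph{The gap: constructing $M$ from $L$.} This is the step you yourself flag as the main obstacle, and your plan does not close it. Matching ``constraints on run lengths'' in $L$ against those known for algebras of maximal class from \cite{CMN,CN} is not carried out, and it is not a realistic route. That theory does not supply a list of local conditions on a centralizer sequence whose verification guarantees that some $M$ exists. You would in effect have to redevelop its structure theory inside $L$, and you give no indication how.

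The missing idea is to build $M$ inside $L$ itself:
\begin{itemize}
\item Proposition~\ref{prop:D} shows that $Dx=0$, $Dy=[v_1y]$ extends to a derivation of any $L\in\mathcal{T}_{q,2}$. This is checked relation by relation, from one diamond to the next, using the bigrading, the support argument and the lemmas of Section~\ref{sec:gc}.
\item The same induction gives $D[vx]=-2[wx]$ across a diamond of infinite type, and $D[vx]=0$ at a fake diamond of type $1$.
\item Then $M$ is simply the subalgebra of $L+\F D$ generated by $X=D$ and $Y=[yx^{q-1}]$. It is a Lie algebra for free.
\item From the two formulas for $D$ one reads off that $M$ has maximal class, with two-step centralizers $\F Y$ and $\F X$ placed according to the diamond pattern of $L$ (Proposition~\ref{prop:LtoM}).
\item Comparing $L$ with $\mathcal{T}_{q,2}(M)$ is then routine (Theorem~\ref{thm:one-to-one}).
\end{itemize}
No input from \cite{CMN,CN} is needed.

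\paragraph{The case split for $\mu_3=\infty$, and Theorem~\ref{thm:uniqueness}.} You misread Theorem~\ref{thm:uniqueness}. It does not say that $\mathcal{T}_{q,2}(M)$ is determined by a finite quotient. It is a membership criterion: $L\in\mathcal{T}_{q,2}$ as soon as $L$ has diamonds of infinite type in degrees $k(q-1)+1$ for $2\le k<2p^s$ and a fake diamond of type $1$ in degree $2p^s(q-1)+1$. This is exactly what settles the ``no mixing'' issue you leave open.

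To reach its hypothesis, your $\mu_3=\infty$ branch must split by the \emph{degree} of the earliest diamond of finite type, not by its type. By Theorem~1.2 of \cite{AviMat:earliest}, this diamond has one of two forms:
\begin{itemize}
\item it lies in degree $(p^s+1)(q-1)+1$, where it may well be fake of type $0$ or $1$, and then $L$ is in case (c) or (d) of Theorem~\ref{thm:regular};
\item it lies in degree $2p^s(q-1)+1$ and is fake of type $1$, which is the hypothesis of Theorem~\ref{thm:uniqueness}.
\end{itemize}
Your bullets would send an algebra of case (d) whose first finite-type diamond after $L_q$ has type $1$ into the $\mathcal{T}_{q,2}$ branch, and they never treat type $0$. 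Moreover, Theorem~\ref{thm:distance}(b) cannot decide between its two alternatives. In degree $2p^s(q-1)+1$ that decision is the computation of Proposition~\ref{prop:dichotomy_first}.

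\paragraph{The finite-$\mu_3$ case.} For the same reason, Theorem~\ref{thm:distance} does not by itself force regular spacing after a fake diamond of type $1$; the algebras $\mathcal{T}_{q,2}(M)$ show that both alternatives of (b) really occur. Here you should invoke \cite{CaMa:Nottingham,Avi,AviMat:A-Z}, as the paper does.
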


Here is how Theorem~\ref{thm:classification} will follow from
Theorem~\ref{thm:uniqueness}.
If $L_{2q-1}$ is of finite type (necessarily different from $0$ or $1$),
then $L$ falls into Case~(a) or~(b) of Theorem~\ref{thm:regular}.
Hence, we may assume that 
$L_{2q-1}$ is of infinite type.
If every other diamond of the algebra is of infinite type, then $L$
belongs to Case (e) of Theorem~\ref{thm:regular}.

Therefore, we may assume that $L$ has at least one further diamond of finite type
besides $L_q$.
According to Theorem~\ref{thm:distance}, the earliest diamond of finite type
past $L_q$, that is, the earliest diamond of finite type
after a first bunch of diamonds of infinite type,
is $L_m$ with $m\equiv 1\pmod{q-1}$.
Here, $L_m$ could possibly be fake.
According to Theorem~1.2 of~\cite{AviMat:earliest}, for some $s > 0$
either $m = (p^s + 1)(q - 1) + 1$, or $m = 2p^s(q - 1) + 1$ and $L_m$ is fake of type $1$.
In the former case, according to the results of~\cite{AviMat:earliest}, the algebra $L$ is isomorphic to one of the algebras of Case~(c) or~(d) of Theorem~\ref{thm:regular}.
In the latter case, we will show with Theorem~\ref{thm:uniqueness} that
$L$ is isomorphic to $\mathcal{T}_{q,2}(M)$ for some  algebra $M$ of maximal class.

Theorem~\ref{thm:uniqueness} actually refers to a class
$\mathcal{T}_{q,2}$ of algebras, introduced in Definition~\ref{def:Tq2},
that captures the
salient properties of all algebras in the family $\mathcal{T}_{q,2}(M)$.
While David Young provided a construction of
$\mathcal{T}_{q,2}(M)$ in terms of $M$,
for an application of Theorem~\ref{thm:uniqueness}
we need the reverse argument,
namely, that every algebra in class
$\mathcal{T}_{q,2}$
actually arises as $\mathcal{T}_{q,2}(M)$ for a certain $M$.
This is the goal of Section~\ref{sec:construction}.

\newpage
\section{The class $\mathcal{T}_{q,2}$ of Nottingham algebras.}\label{sec:construction}

In his PhD Thesis~\cite{Young:thesis} David Young
constructed a family $\mathcal{T}_{q,2}(M)$ of Nottingham algebras with all diamonds (except the second one) of infinite type or fake. 
Each of these algebras originates from an algebra of maximal class $M$ with
exactly two $2$-step centralizers.
For the reader's convenience, we outline Young's construction in Proposition~\ref{prop:MtoL}.
In the remainder of this section, we characterize, with the present classification in mind, the algebras constructed by Young within the class of Nottingham algebras.

We start with recalling the algebras of maximal class that are the ingredient of Young's construction.

A (graded Lie) algebra $M = \bigoplus_{i=1}^{\infty} M_{i}$ is said to be of maximal class
if $\dim(M_{1}) = 2$, $\dim(M_{i}) \le 1$ for $i > 1$,
and $M_{i} = [M_{i-1} M_{1}]$ for $i > 1$.
As for thin algebras, we restrict our attention to the infinite-dimensional  algebras
of maximal class, that is, from now on we assume $\dim(M_{i}) = 1$ for $i > 1$.
The two-step centralizer $C_{i} = C_{M_{1}}(M_{i})$, for $i > 1$ is then a one-dimensional subspace of $M_{1}$.
The sequence of centralizers $C_{i}$ determines $M$ up to isomorphism. We are interested in two particular cases.
The first case occurs when all centralizers $C_{i}$ coincide.
Choose an element $Y \ne 0$ in $C_{2}$, and an element $X \in M_{1} \setminus C_{2}$.
Set $U_{1} = Y$, and recursively $U_{i} = [U_{i-1} X]$ for $i > 1$.
Then $M_{i}$ is spanned by $U_{i}$, for $i \ge 2$.
This algebra is a metabelian algebra of maximal class, as it is immediate
that the linear span of the $U_{i}$ is an abelian ideal, and it is clearly unique as such.
The other case in which we are interested occurs when the set $\{ C_{i} : i \ge 2 \}$
contains exactly two elements, that is, $M$ has exactly
two (distinct) $2$-step centralizers.
Let $Y$ be a non-zero element in $C_{2}$, and $X$ be a non-zero element
in the other two-step centralizer.
Define $U_{1} = Y$ and then, for $i > 1$ set
\[
U_{i}
=
\begin{cases}
[U_{i-1} X] &\textrm{if $C_{i-1}=\F Y$},\\
[U_{i-1} Y] & \textrm{if $C_{i-1}=\F X$}.
\end{cases}
\]
Then $M_{i}$ is spanned by $U_{i}$, for $i \ge 2$.
We denote by $\mathcal{M}$ the family of  algebras of maximal class with at most two distinct two-step centralizers.
Note that the algebras in $\mathcal{M}$  are precisely those  algebras of maximal class  that are bigraded
in the sense of Section~\ref{sec:grading}, but we will return to this point
after the proof of Proposition~\ref{prop:MtoL}.

Young's construction takes an algebra $M\in\mathcal{M}$, embeds it
into a larger Lie algebra, and identifies a certain Nottingham algebra $\mathcal{T}_{q,2}(M)$ as a subalgebra of the latter.

Let us introduce the following definition.
\begin{definition}\label{def:Tq2}
Let $\mathcal{T}_{q,2}$ denote the class of Nottingham algebras $L$
such that every genuine diamond after the second diamond $L_q$ has type $\infty$,
and whenever $L_{m}$ is a genuine diamond,
the next genuine diamond is either $L_{m+q-1}$ or $L_{m+2q-1}$.
\end{definition}

Note that in the latter case of Definition~\ref{def:Tq2}, that is,
when $L_{m+2q-1}$ is the next genuine diamond after the genuine diamond $L_m$,
according to Theorem~\ref{thm:distance}
there must be precisely one fake diamond in between, and it can only be read as
being $L_{m+q-1}$ of type $1$ or, equivalently, $L_{m+q}$ of type $0$.

Each algebra $\mathcal{T}_{q,2}(M)$ belongs to the class $\mathcal{T}_{q,2}$. Conversely, we will show in Theorem~\ref{thm:one-to-one} that each algebra in $\mathcal{T}_{q,2}$ is of the form $\mathcal{T}_{q,2}(M)$, for a unique $M \in \mathcal{M}$.

Young's construction of $\mathcal{T}_{q,2}(M)$ starts with extending the scalars of $M$ to a divided power algebra,
which we recall now.
Let $\F$ be a field of characteristic $p$, and $q=p^n$ for some $n\geq 1$. Let $\F[\ee;q]$ denote
the divided power algebra with basis $\{\ee^{(i)}:0\leq i\leq q-1\}$
and multiplication $\ee^{(i)}\cdot \ee^{(j)}=\tbinom{i+j}{i}\ee^{(i+j)}$,
where we may write $\ee^{(0)}$ as $1$.
Let $\partial:\F[\ee;q]\rightarrow \F[\ee;q]$ be
the {\em standard} derivation of $\F[\ee;q]$, defined on the basis elements by
$\partial:\ee^{(i)}\mapsto \ee^{(i-1)}$ when $i>0$ and $\partial 1=0$.

\begin{prop}[Theorem~3.3 in \cite{Young:thesis}]\label{prop:MtoL}
Let $M\in \mathcal{M}$ and let $L$  be the subalgebra of $M\otimes \F[\ee;q]+\F\cdot(1\otimes \partial)$ generated by
\[
x=1\otimes \partial \quad \textrm{and} \quad y=X\otimes \ee^{(q-2)}+Y\otimes \ee^{(q-1)}.
\]
Then $L\in \mathcal{T}_{q,2}$.
\end{prop}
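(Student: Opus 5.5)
The plan is to make $L$ completely explicit inside $M\otimes\F[\ee;q]+\F\cdot(1\otimes\partial)$ and then read off its grading, its diamonds and their types from the sequence of two-step centralizers of $M$.

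\emph{Grading and first computations.} Assign weight $q-1$ to $X$ and weight $q$ to $Y$. Since $M\in\mathcal{M}$ is bigraded in $X,Y$, this gives a grading of $M$ in which $U_k$ is homogeneous. Give $\ee^{(j)}$ weight $-j$. Then $1\otimes\partial$ and $y=X\otimes\ee^{(q-2)}+Y\otimes\ee^{(q-1)}$ are both homogeneous of weight $1$, so $L$ is graded by this weight and $x,y$ span $L_1$.

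Since $\ad x$ acts as $\partial$ on the second factor, we get
\[
[yx^k]=X\otimes\ee^{(q-2-k)}+Y\otimes\ee^{(q-1-k)},
\]
and in particular $[yx^{q-1}]=Y\otimes 1$ and $[yx^q]=0$. Using $\ee^{(i)}\ee^{(j)}=0$ for $i+j\ge q$, one checks that $[yx^iy]=0$ for $i<q-2$. For $v_1=[yx^{q-2}]=X\otimes1+Y\otimes\ee$ one finds
\[
[v_1y]=2[XY]\otimes\ee^{(q-1)}\neq 0.
\]
So $L_i$ is one-dimensional for $1<i<q$, and the second diamond is $L_q$, spanned by $Y\otimes 1$ and $[XY]\otimes\ee^{(q-1)}$. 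The relations~\eqref{eq:second_diamond_type} can then be verified directly, so $x,y$ are standard generators.

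\emph{Explicit description of $L$.} The key structural claim is that, in degrees $n\ge q$, $L_n$ is exactly the span of the elements $U_k\otimes\ee^{(j)}$ with $k\ge1$, $0\le j\le q-1$ and weight $\mathrm{wt}(U_k)-j=n$. The inclusion $\subseteq$ is clear from the weights. For the reverse inclusion I would argue by induction on $k$:
\begin{itemize}
\item applying powers of $\ad x$ to $U_k\otimes 1$ produces $U_k\otimes\ee^{(j)}$ only in the wrong direction, so instead we start from $U_k\otimes\ee^{(q-1)}$ and differentiate down to every $U_k\otimes\ee^{(j)}$;
\item we obtain $U_{k+1}\otimes\ee^{(q-1)}$ as a suitable combination of $[U_k\otimes\ee^{(j)},y]$ and $x$-derivatives of it, using which of $X,Y$ centralizes $U_k$ to kill one of the two terms.
\end{itemize}

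\emph{Reading off the diamonds.} For fixed $k$ the elements $U_k\otimes\ee^{(j)}$ fill an interval of $q$ consecutive weights. Passing from $U_k$ to $U_{k+1}$ raises the weight by $q-1$ if $U_{k+1}=[U_kX]$, and by $q$ if $U_{k+1}=[U_kY]$. Consequently:
\begin{itemize}
\item consecutive intervals overlap in exactly one degree after an $X$-step; that degree is a genuine diamond, spanned by $U_k\otimes 1$ and $U_{k+1}\otimes\ee^{(q-1)}$;
\item consecutive intervals merely abut after a $Y$-step, producing a fake diamond.
\end{itemize}
This also yields the covering property and the dimension bound $\dim L_n\le2$. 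For the types, I would compute $[wx]$ and $[wy]$ for $w=U_k\otimes\ee^{(1)}$, or the appropriate predecessor. This should give $[wxx]=0=[wyy]$ and $[wyx]=-[wxy]$, i.e.\ infinite type: the argument is the same computation as for $L_q$, but now the coefficient $1-(q-1)$ is replaced by the one coming from $\ee^{(0)}\cdot\ee^{(q-1)}$, which yields $-1$.

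\emph{Main obstacle.} The distance condition of Definition~\ref{def:Tq2} requires that two $Y$-steps never occur consecutively, i.e.\ that the centralizer $\F X$ only ever appears in isolation in the sequence $C_i$. For this I would invoke the structure theory of algebras of maximal class in~\cite{CMN,CN}, where $C_2$ is the generic centralizer and the other two-step centralizer occurs isolated. Granting this, genuine diamonds come at distance $q-1$, or $2q-1$ with a single fake diamond between them. I expect the hardest step to be the inductive generation claim (the reverse inclusion), because the divided power binomial coefficients must be shown nonzero modulo $p$; I would first try to do this via Lucas' theorem.
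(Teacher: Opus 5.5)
Your proposal is correct and follows essentially the same route as the paper's sketch. Both compute explicitly inside $M\otimes\F[\ee;q]$, and find that the component following $U_k\otimes\ee^{(1)}$ is a genuine diamond of infinite type when $[U_kY]=0$ and a fake diamond of type $1$ when $[U_kX]=0$. The distance condition then rests on occurrences of $\F X$ being isolated among the two-step centralizers of $M$, which the paper also takes from the theory of algebras of maximal class. Your generation step is easier than you expect: the only coefficients that arise are $\binom{q-1}{1}\equiv -1$ and $1$, via $[U_k\otimes\ee^{(1)},y]=-U_{k+1}\otimes\ee^{(q-1)}$ or $[U_k\otimes 1,y]=U_{k+1}\otimes\ee^{(q-1)}$, so Lucas' theorem is not needed.
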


Following~\cite{Young:thesis}, we denote the Nottingham algebra $L$ thus constructed by $\mathcal{T}_{q,2}(M)$.

\begin{proof}[Sketch of proof]
For $M\in \mathcal{M}$, with generators $X$ and $Y$ chosen as described earlier,
consider $M\otimes \F[\ee;q]+\F\cdot(1\otimes \partial)$,  and let $L$  be the subalgebra generated by
\[
x=-1\otimes \partial \quad \textrm{and} \quad y=X\otimes \ee^{(q-2)}+Y\otimes \ee^{(q-1)}.
\]

It is easy to check that each homogeneous component $L_{i+1}$ of $L$, for $1\leq i\leq q-2$, is one-dimensional,
spanned by $[y x^{i}]=X\otimes \ee^{(q-2-i)}+Y\otimes \ee^{(q-1-i)}$, and centralized by $\F y$ for $i<q-2$.
In particular, $L_{q-1}$ is spanned by $v_{1}=[y x^{q-2}]=X\otimes 1+Y\otimes \ee^{(1)}$.
The homogeneous component  $L_q$ is a diamond, spanned by $[v_{1}x]=Y\otimes 1$ and
\[
[v_{1}y]
=[X\otimes 1,Y\otimes \ee^{(q-1)}]
+[Y\otimes \ee,X\otimes \ee^{(q-2)}]
=-2 U_{2}\otimes \ee^{(q-1)},
\]
where $U_{2}=[Y X]$ according to notation introduced earlier in this section.
Since  $[v_{1}x x]=0=[v_{1}y y]$ and $[v_{1}y x]=-2U_{2}\otimes \ee^{(q-2)}=-2[v_{1}x y]$, one concludes that $L_q$ is a diamond of type $-1$.

Each one-dimensional homogeneous component $L_{m-1}$ of $L$ spanned by an element of the form $v=U_{i}\otimes \ee^{(1)}$, is just before a diamond of $L$.
Indeed, if $[U_{i}Y]=0$ then $L_{m}$ is spanned by $[v x]=U_{i}\otimes 1$ and $[v y]=-U_{i+1}\otimes \ee^{(q-1)}$.
Furthermore, we have
$[v x x]=0=[v y y]$ and $[v x y]=U_{i+1}\otimes \ee^{(q-2)}=-[v y x]$,
and hence $L_{m}$ is a diamond of infinite type.
If $[U_{i} X]= 0$, then $L_{m}$ is a fake diamond of type $1$,
because $[v x]=U_{i}\otimes 1$ and $[v y]=[v x x]=0$.
Note that in this case $[v x y]=U_{i+1}\otimes \ee^{(q-1)}$, hence the next diamond is $L_{m+q}$.
Thus we are in the second alternative of case (b) of Theorem~\ref{thm:distance}.
The diamond $L_{m+q}$ has necessarily infinite type, because occurrences of $\F X$ are isolated in the sequence of $2$-step centralizers of $M$.

Finally, each homogeneous component of $L$ spanned by an element of the form $U_{i}\otimes \ee^{(j)}$ with $1<j<q$ is centralized by $y$.
The collection of calculations noted so far shows that $L$ is thin, and in fact a
Nottingham algebra with second diamond $L_q$.
Note also that the first diamond of $L$ of type $1$ occurs in degree $2p^{s}(q-1)+1$, for some $s \geq 1$.
Consequently, all Nottingham algebras thus constructed belong to the class $\mathcal{T}_{q,2}$.
\end{proof}

Any algebra $M\in \mathcal{M}$ is also naturally $(\Z\times\Z)$-graded by assigning any two independent bidegrees to $X$ and $Y$.
However, the natural choice in the context of Proposition~\ref{prop:MtoL} is to assign
$X$ and $Y$ bidegrees $(q-2,1)$ and $(q-1,1)$, respectively.
If we extend this double grading of $M$ to a double grading of
$M\otimes \F[\ee;q]$
by assigning bidegree $(-1,0)$ to $\ee^{(1)}$,
then the derivation $x=-1\otimes\partial$ of $M\otimes \F[\ee;q]$
acquires bidegree $(1,0)$.
Furthermore,
$y=X\otimes \ee^{(q-2)}+Y\otimes \ee^{(q-1)}$
is also a bihomogeneous element, of bidegree $(0,1)$.
Thus the Nottingham subalgebra $L$ constructed from $M$ as in
Proposition~\ref{prop:MtoL} inherits precisely the standard double grading of Nottingham algebras defined
in Section~\ref{sec:grading}.

The elements $U_i\otimes 1$ of $L$ span a subalgebra isomorphic with
the maximal ideal of $M$ that contains $Y$, after identifying each $U_i\otimes 1$ with $U_i$.
Thus, in order to recover $M$ from $L$ we only need to produce a derivation of $L$, of bidegree $(q-2,1)$, that will play the role of $X$ in $M$.
In the next result we produce such a derivation, not just of $L=\mathcal{T}_{q,2}(M)$,
but of any algebra $L\in\mathcal{T}_{q,2}$.

In the rest of this section, we establish a natural one-to-one correspondence between the class $\mathcal{T}_{q,2}$ and the class $\mathcal{M}$.
In one direction the correspondence is given by the above construction.
For the opposite direction, we first construct
a suitable derivation of each algebra in $\mathcal{T}_{q,2}$.

\begin{prop}\label{prop:D}
Let $L\in \mathcal{T}_{q,2}$.
\begin{enumerate}
\item There exists a unique derivation  $D:L\rightarrow L$  such that
\[
Dx=0 \quad \textrm{and}\quad Dy=[yx^{q-2}y]=[v_{1}y].
\]
\item
If $L_m$ and $L_{m+q-1}$ are diamonds of $L$,
with $L_{m}$ of infinite type and $L_{m+q-1}$ either of infinite type or fake of type $1$,
then $D[vx]=-2[wx]$, where $0 \neq v \in L_{m-1}$ and $w=[vxyx^{q-3}]\in L_{m+q-2}$.
\item
If $L_m$ and $L_{m+q}$ are diamonds of $L$, with $L_{m}$ fake of type $1$ and $L_{m+q}$ of infinite type, then $D[vx]=0$, where $0 \neq v \in L_{m-1}$.
\end{enumerate}
\end{prop}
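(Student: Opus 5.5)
Uniqueness in part~(1) is immediate, since $L$ is generated by $x$ and $y$ and a derivation is determined by its values on generators. For existence, I would build $D$ degree by degree on a homogeneous basis of $L$. The double grading of Section~\ref{sec:grading} helps here: $D$ must have bidegree $(q-2,1)$, so $D$ is forced to map $L_{(r,s)}$ into $L_{(r+q-2,s+1)}$, which is at most one-dimensional. Concretely, every element of $L$ is a linear combination of left-normed monomials $[yz_1\cdots z_k]$ with $z_i\in\{x,y\}$. The rule $D[uz]=[Du,z]+[u,Dz]$ together with $Dx=0$ gives the recursion
\[
D[ux]=[Du\,x],\qquad D[uy]=[Du\,y]+[u[v_1y]].
\]
I would first check the analogous statements for $\mathcal{T}_{q,2}(M)$: there the derivation is simply $\ad$ of $X\otimes 1$ (suitably normalized, possibly up to sign and scalar), since $[y, X\otimes1]$ should be a multiple of $[v_1y]=-2U_2\otimes\ee^{(q-1)}$ and $X\otimes 1$ commutes with $1\otimes\partial$. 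This guides the expected formulas in~(2) and~(3). However, for an abstract $L\in\mathcal{T}_{q,2}$ I cannot use $M$, so I must verify well-definedness directly.

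The approach to well-definedness is the standard one. Consider the free Lie algebra on $x,y$ and the derivation $\tilde D$ with $\tilde Dx=0$ and $\tilde Dy$ a lift of $[v_1y]$. Composing with the projection gives a derivation into $L$ viewed as a module. It descends to $L$ exactly when it maps the defining ideal of relations into zero. I would use a presentation of $L$ by the relations that describe it: centralizing relations $[uy]=0$ for the components that are neither diamonds nor immediately precede one, and the diamond relations $[wxx]=0=[wyy]$, $[wyx]=-[wxy]$ (or the fake-type relations of Definition~\ref{def:type-fake}), together with $(\ad x)^q=0$ and $(\ad y)^2=0$. Theorem~\ref{thm:distance}(d) and the remark after it justify that these relations determine $L$.

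The verification then proceeds by induction on degree. It suffices to show that $D$, defined on the span of $[yx^{i}]$ and inductively on longer monomials, sends each relation to zero. I would organize this along the chain structure: between diamonds, the elements are $[vx^j]$ or $[vxyx^j]$, and $D$ of them is computed by the recursion. Parts~(2) and~(3) would be proved simultaneously as part of the induction, since they pin down $D$ on the element $[vx]$ at each diamond. They are then what is needed to check that $D$ respects the relations at the next diamond $L_{m+q-1}$ (or $L_{m+q}$).

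For~(2), I would compute $D[vx]$ through $D[vx]=[Dv\,x]$ with $Dv\in L_{m+q-2}$, so $D[vx]$ is a multiple of $[wx]$. To find the scalar I would use $v=[v'xyx^{q-3}]$ from the previous diamond and expand $[v'xyx^{q-3}[v_1y]]$ using the generalized Jacobi identity with $v_1y=[yx^{q-2}y]$. The support argument then kills all but a couple of terms, in the same way as the type $-1$ computation in Section~\ref{sec:grading}, and this should give $-2$. For~(3), $D[vx]$ lies in bidegree position $L_{m+q-1}$ of the same bidegree as $[vx\cdots]$ shifted by $(q-2,1)$. I expect this bidegree to lie outside the support when $L_m$ is fake of type $1$ and the next diamond is $L_{m+q}$, giving $D[vx]=0$ by the support argument; otherwise a direct Jacobi computation using $[vy]=0$ applies.

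The main obstacle is checking the diamond relations, especially $[wyx]=-[wxy]$ and $[wyy]=0$ at each infinite-type diamond, under $D$. This requires the exact scalars from~(2) and~(3) propagated through the long chains $[\,\cdot\,x^{q-3}]$ with many binomial coefficients. Here I would lean on Lucas' theorem and the support argument, and on the technical lemmas deferred to Section~\ref{sec:gc}.
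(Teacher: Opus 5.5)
Your framework is the paper's. You lift $D$ to the free Lie algebra, check degree by degree that it kills the new defining relations, and induct from one diamond to the next, carrying (2) and (3) along. The problem is that the two computations you sketch for (2) and (3), which are the whole content of that induction, do not work as stated.

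\textbf{Part (3).} Let $v\in L_{(r,s)}$ span $L_{m-1}$, with $L_m$ fake of type $1$. Then $D[vx]$ has bidegree $(r+q-1,s+1)$. This is exactly the bidegree of $[vxyx^{q-2}]$, the nonzero element spanning $L_{m+q-1}$, so $D[vx]$ lies inside the support and the support argument says nothing. Since $Dx=0$ and $[vxyx^{q-1}]\neq 0$ (it lies in the genuine diamond $L_{m+q}$), compatibility of $D$ with the relation $[vxx]=0$ is \emph{equivalent} to (3). So (3) cannot come from a local argument; it has to be inherited from the previous step of the induction.

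\textbf{Part (2).} The bracket you propose to expand, $[v'xyx^{q-3}[v_1y]]=[v[v_1y]]=[vv_1y]-[vyv_1]$, is zero at a diamond of infinite type by Lemma~\ref{lemma:v1}. It only yields $D[vy]=-2[wy]$, and cannot produce the scalar. The scalar is carried by the other correction term.

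Let $u$ span $L_{m-q}$, where $L_{m-q+1}$ is a genuine diamond, so that $v=[uxyx^{q-3}]$. Suppose inductively $Du=-2v$; the base case $Dv_1=[v_1yx^{q-2}]=-2v_2$ comes from the type $-1$ relation. Lemma~\ref{lemma:v1} gives $[ux[v_1y]]=[uxv_1y]-[uxyv_1]=2[vxy]+2[vyx]$. Hence
\[
D[uxy]=-2[vxy]+[ux[v_1y]]=2[vyx].
\]
This one identity does all the work:
\begin{itemize}
\item Together with $D[uyx]=-2[vyx]$, it verifies the only relation that needs computation, namely $[uxy]+[uyx]=0$. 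The centralizing relations and $[uxx]=0=[uyy]$ are free by the support argument. At $L_q$, the same identity together with $D[v_1yx]=-4[v_2yx]$ handles $[v_1yx]+2[v_1xy]=0$.
\item It gives $Dv=2[vyx^{q-2}]=-2[vxyx^{q-3}]$ when $L_m$ has infinite type, which is (2).
\item It gives $Dv=0$ when $L_m$ is fake of type $1$, because $[vy]=0$, which is (3).
\end{itemize}
After a fake diamond $L_{m-q}$ of type $1$, take $u$ spanning $L_{m-q-1}$ with $Du=0$. Lemma~\ref{le:type_1} gives the analogue $[ux[v_1y]]=2[vy]$ with $v=[uxyx^{q-2}]$, so again $Dv=-2[vxyx^{q-3}]$.

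No Lucas-type binomial bookkeeping is needed beyond these lemmas.
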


\begin{proof}
Informally, $D$ determines a linear map on $L_1$,  which we extend
to a linear map on $\bigoplus_{i=1}^kL_i$ inductively on $k$
by imposing that it satisfies the Leibniz' rule at each stage.
For this to succeed it suffices to make sure that $D$
agrees with each homogeneous relation $w=0$ that holds in $L$,
meaning that $Dw=0$.
While doing this to establish Assertion~(1) we will also prove
Assertions~(2) and ~(3).

In formal terms, if $x$ and $y$ are standard generators for $L$ we view
$L$ as a quotient ${\mathcal F}/R$ of the free Lie algebra ${\mathcal F}$ on $x$ and $y$.
If $D$ is the unique derivation of $F$ that satisfies the conditions in
Assertion~(1), to prove that $D$ actually defines a derivation of $L={\mathcal F}/R$
it suffices to show $DR\subseteq R$.
We do that by proving $DR_k\subseteq R_{k+q-1}$ by induction on $k$,
which may be more conveniently viewed as $DR_k\equiv 0\pmod{R}$.
Now $R_k=\langle w_1\rangle+\langle w_2\rangle+[R_{k-1},x]+[R_{k-1},y]$,
where $w_1=0$ and $w_2=0$ are the at most two independent homogeneous relations
that hold for $L$ in degree $k$.
Since working inductively we may assume
$D[R_{k-1},x]=[DR_{k-1},x]\equiv 0\pmod{R}$
and
$D[R_{k-1},y]=[DR_{k-1},y]+[R_{k-1},Dy]\equiv 0\pmod{R}$,
we see that it suffices to check
$Dw_1\equiv 0\pmod{R}$ and $Dw_2\equiv 0\pmod{R}$.
To avoid constantly writing congruences modulo $R$,
we revert for ease of notation to the more informal writing
$Dw=0$ that we employed in the previous paragraph.

The double grading of $L$ and the support argument introduced in
Section~\ref{sec:grading} will save us several calculations.
Formally, viewing $L={\mathcal F}/R$ as above, the ideal of relations $R$
is a graded ideal with respect to the $(\Z\times\Z)$-grading of ${\mathcal F}$
obtained by declaring $x$ and $y$ to have degrees $(1,0)$ and $(0,1)$.
When the result of a calculation on homogeneous elements of ${\mathcal F}$
lies in a bidegree that falls outside the support of $L$, we automatically
know that the result is zero modulo $R$.
In this respect, note that
$D{\mathcal F}_{(r,s)}\subseteq{\mathcal F}_{(r+q-2,s+1)}$.

One instance of this is when $[L_{k-1},y]=0$.
This occurs, in particular, when $2<k<q$.
More generally, it occurs whenever $L_{k-1}$ is a homogeneous component of $L$
which is not a diamond or immediately precedes a diamond,
with an obvious interpretation in case that diamond is fake.
Then $L_{k-1}=\langle u\rangle=L_{(r,s)}$, say, and in degree $k$ we have
the single relation $[uy]=0$.
Now $D[uy]\in L_{(r+q-2,s+2)}$, but
according to Theorem~\ref{thm:distance}
we have $[L_{k+q-2},y]=0$, and also $L_{k+q-2}=L_{(r+q-2,s+1)}$.
Hence $L_{(r+q-2,s+2)}=0$ and so we conclude $D[uy]=0$
without actually performing the (easy) calculation.
In short, we draw the conclusion from the fact that the bidegree $(r+q-2,s+2)$
lies outside the support of $L$ according to Theorem~\ref{thm:distance}.

Next, there is no relation to check in degree $q$.
More generally, there is no relation to check in degree $k$ whenever
$\dim(L_{k-1})=1$ and $\dim(L_k)=2$, that is, when $L_k$ is a genuine diamond
(including the case of $L_1$).

To determine the effect of $D$ so far, note that because $Dx=0$
the derivation $D$ commutes with the inner derivation $\ad x$.
Hence $D[ux^i]=[Du,x^i]$ for $u\in L$, for every positive integer $i$.
In particular,  we have
$D[yx^i]=[v_{1}yx^i]$ and $Dv_{1}=[v_{1}yx^{q-2}]=-2v_{2}$.

In degree $q+1$ we have the relations $[v_{1}xx]=0$, $[v_{1}yy]=0$ and
$[v_{1}yx]+2[v_{1}xy]=0$.
Then in degree $2q$ we have $D[v_{1}xx]=0$ and $D[v_{1}yy]=0$ by the support argument.
In fact, according to Theorem~\ref{thm:distance} the bidegrees of the left-hand
sides equal the formal bidegrees of $[v_{2}xx]$ and $[v_{2}yy]$,
and those bidegrees lie outside the support of $L$.
As to the third diamond relation, we find $D[v_{1}yx]=-4[v_{2}yx]$, and
\begin{align*}
D[v_{1}xy]&=[Dv_{1},xy]+[v_{1}x,Dy]
=-2[v_{2}xy]+[v_{1}x[v_{1}y]]=2[v_{2}yx],
\end{align*}
where
$[v_1x[v_1y]]=[v_1xv_1y]-[v_1xyv_1]=2[v_2xy]+2[v_2yx]$
according to Lemma~\ref{lemma:v1}.
Hence we obtain $D([v_{1}yx]+2[v_{1}xy])=0$ as desired.
So far we have proved that $D$ agrees with all homogeneous relations in $L$
up to degree $q+1$.

Now we proceed inductively, in batches from one diamond to the next.
Thus, we assume Assertions~(1)--(3) hold for all diamonds up to and including $L_m$,
and prove that they hold for the next diamond $L_{m+q-1}$ as well,
whether of infinite type or fake of type $1$.
The inductive step splits into two cases, according to the inductive hypothesis being provided by
Assertion~(2), or~(3).

First suppose $L_m$ and $L_{m+q-1}$ are diamonds,
with $L_m$ of infinite type and $L_{m+q-1}$ of infinite type or fake of type $1$.
Let $0\neq v\in L_{m-1}$ and set $w=[vxyx^{q-3}]\in L_{m+q-2}$.
According to Assertion~(2) we may inductively assume $Dv=-2w$.
Then we have $D[vx]=[Dv,x]=-2[wx]$, and
\[
D[vy]=[Dv,y]+[v,Dy]=-2[wy]+[v[v_1y]]=-2[wy],
\]
where $[v[v_1y]]=[vv_1y]-[vyv_1]=0$ according to Lemma~\ref{lemma:v1}
since $L_m$ has infinite type.
Proceeding one degree higher, the support argument gives us $D[vxx]=0$ and $D[vyy]=0$ for free.
Furthermore, $D[vyx]=-2[wyx]$ and
\[
D[vxy]=[Dv,xy]+[vx,Dy]=-2[wxy]+[vx[v_{1}y]]=2[wyx],
\]
where
$[v_1x[vy]]=[vxvy]-[vxyv]=2[wxy]+2[wyx]$
according to Lemma~\ref{lemma:v1}.
Hence we obtain $D([vxy]+[vyx])=0$ as desired.
Next, as we argued earlier the support argument yields $D[vxyx^iy]=0$ for $0\le i<q-3$.
Finally,
\[
Dw=D[vxyx^{q-3}]=[D[vxy],x^{q-3}]=2[wyx^{q-2}]
\]
Finally, note that $Dw=D[vxyx^{q-3}]=2[wyx^{q-2}]$,
Hence
$
0\neq Dw=-2[wxyx^{q-3}]\in L_{m+2q-3}
$
if $L_{m+q-1}$ has infinite type,
and $Dw=0$ if $L_{m+q-1}$ is fake of type $1$.

Now suppose $L_m$ and $L_{m+q}$ are consecutive diamonds,
with $L_{m}$ fake of type $1$ and $L_{m+q}$ of infinite type.
Let $0\neq v\in L_{m-1}$ and set $w=[vxyx^{q-2}]\in L_{m+q-1}$.
According to Assertion~(3) we may inductively assume $Dv=0$.
Then we have $D[vx]=[Dv,x]=0$, and $D[vy]=0$ by the support argument.
One degree higher we have $D[vxx]=[Dv,xx]=0$, and
\[
D[vxy]=[vx,Dy]=[vx[v_{1}y]]=[vxv_{1}y]-[vxyv_{1}]=[wy]+[wy]=2[wy],
\]
where we have used Lemma~\ref{le:type_1}.
Consequently, we have
\[
D[vxyy]=2[wyy]+[vxy[v_{1}y]]=[vxyv_{1}y]=-[wyy]=0
\]
Next, the support argument yields $D[vxyx^iy]=0$ for $0<i\le q-3$.
Finally, note that $0\neq Dw=D[vxyx^{q-2}]=-2[wxyx^{q-3}]\in L_{m+2q-2}$.
Taking into account that $Dv_{1}=-2v_{2}$, an inductive argument completes the proof.
\end{proof}

If $L\in \mathcal{T}_{q,2}$ is given a $(\Z\times\Z)$-grading as in
Section~\ref{sec:grading}, so that $x$ and $y$  have degrees $(1,0)$ and $(0,1)$, then
the derivation $D$ constructed in Proposition~\ref{prop:D}
is a graded derivation of $L$, of bidegree $(q-1,1)$.
That derivation will now be instrumental in producing an algebra of maximal class in $\mathcal{M}$
starting from any Nottingham algebra in the class $\mathcal{T}_{q,2}$.
This direction of the correspondence was not made explicit in~\cite{Young:thesis}.

\begin{prop}\label{prop:LtoM}
 Let $L\in \mathcal{T}_{q,2}$ and let $D\in \Der(L)$ be the derivation defined in
 Proposition~\ref{prop:D}.
 Let $M$ be the subalgebra of $L+\F D$ generated by $X=D$ and $Y=[yx^{q-1}]$. Then $M\in \mathcal{M}$.
\end{prop}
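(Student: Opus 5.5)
The plan is to describe $M$ explicitly inside $L+\F D$, using the double grading of Section~\ref{sec:grading}. I will show that $M$ has a basis $X,Y,U_2,U_3,\dots$, where each $U_i$ is (up to a nonzero scalar) an element of the form $[vx]$, with $v$ spanning the component immediately preceding a (genuine or fake) diamond $L_m$ of $L$ past $L_q$. I will also show that each $U_i$ is killed by exactly one of $X,Y$.

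\textbf{Step 1: preliminaries.}
\begin{itemize}
\item Since $Dx=0$ and $Dy=[v_1y]\in L_{(q-2,2)}$, the derivation $D$ is bihomogeneous of bidegree $(q-2,1)$ and has degree $q-1$.
\item The element $Y=[v_1x]$ has bidegree $(q-1,1)$.
\item $D$ is not inner. Otherwise $D=\ad u$ for some $u\in L_{q-1}=\F v_1$, and $Dx=0$ would force $[v_1x]=0$, which is false.
\item Hence $\dim M_1=2$, where I grade $M$ by giving $X$ and $Y$ degree $1$.
\item Every product of $M$ involving $a$ factors $X$ and $b\ge 1$ factors $Y$ lies in $L_{(a(q-2)+b(q-1),\,a+b)}$. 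So $M\cap L$ is contained in $\bigoplus_{b\ge1} L_{(\ast,\ast)}$, and each $M_i$ with $i\ge 2$ lies in a union of one-dimensional bihomogeneous components of $L$.
\end{itemize}

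\textbf{Step 2: compute the action of $X$ and $Y$ on $z=[vx]$.} Here $0\ne v\in L_{m-1}$ and $L_m$ is a diamond. For $X$ I use Proposition~\ref{prop:D}:
\begin{itemize}
\item if $L_m$ has infinite type, then $[z,X]=-Dz=2[wx]$ with $w=[vxyx^{q-3}]$;
\item if $L_m$ is fake of type $1$, then $[z,X]=0$.
\end{itemize}
For $Y$ I expand $[z,Y]=[vx[yx^{q-1}]]=\sum_{i}[vx\,x^{i}yx^{q-1-i}]$ by the generalized Jacobi identity. By the support argument and Theorem~\ref{thm:distance}, almost all terms vanish. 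The result lies in the bidegree of $[wxx]$ (respectively of $[vxyx^{q-1}]$ in the fake case), and the cases are as follows.
\begin{itemize}
\item If $L_m$ has infinite type, then $[z,Y]=0$. Both $[wxx]=0$ and the vanishing of the other surviving terms follow from the diamond relations, together with Lemma~\ref{lemma:v1}.
\item If $L_m$ is fake of type $1$ (so the next genuine diamond is $L_{m+q}$), then $[z,Y]$ is a nonzero multiple of $[w'x]$, where $w'=[vxyx^{q-2}]$ spans $L_{m+q-1}$. This is the computation in the style of Lemma~\ref{le:type_1} already used in the proof of Proposition~\ref{prop:D}.
\end{itemize}
The first step needs its own check: $[Y,X]=-D[v_1x]=2[v_2x]\neq0$, and $[Y,Y]=0$.

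\textbf{Step 3: induction along the diamonds.} Order the diamonds of $L$ past $L_q$ according to Definition~\ref{def:Tq2}, reading each fake diamond as type $1$ when measuring the distance to it and as type $0$ when measuring the distance from it. By induction, $M_i$ is spanned by $U_i=[v^{(i)}x]$, the element attached to the $i$-th diamond in this sequence. By Step~2:
\begin{itemize}
\item when the current diamond has infinite type, $[U_iY]=0$ and $[U_iX]$ spans $M_{i+1}$, so $C_{M_1}(M_i)=\F Y$;
\item when it is fake of type $1$, $[U_iX]=0$ and $[U_iY]$ spans $M_{i+1}$, so $C_{M_1}(M_i)=\F X$.
\end{itemize}
Thus $\dim M_i=1$ and $M_{i+1}=[M_iM_1]$ for all $i\ge 2$. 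Since $L$ has infinitely many diamonds, $M$ is infinite-dimensional and of maximal class, and its only two-step centralizers are $\F X$ and $\F Y$. Therefore $M\in\mathcal{M}$.

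\textbf{Main obstacle.} I expect the hardest step to be the precise evaluation of $[vx[v_1x]]$ in the fake case, in particular proving that it is nonzero. Its vanishing in the infinite-type case also requires tracking exactly which terms of the generalized Jacobi expansion survive the support argument. I would first reduce everything to the local diamond relations using Lemmas~\ref{lemma:v1} and~\ref{le:type_1}, and copy the pattern of the corresponding computation for $[vx[v_1y]]$ in the proof of Proposition~\ref{prop:D}.
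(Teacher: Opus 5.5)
Your proposal is correct and follows essentially the same route as the paper. Both arguments use Proposition~\ref{prop:D}(2)--(3) together with a direct evaluation of $[vx\,Y]$ to show that exactly one of $X,Y$ annihilates each $U_i=[vx]$, and then induct along the diamonds of $L$.

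The step you flag as the main obstacle is in fact immediate. Since $[vxx]=0$ in both cases, the generalized Jacobi identity collapses to $[vx[yx^{q-1}]]=[vxyx^{q-1}]$. This equals $[wxx]=0$ when $L_m$ has infinite type, and $[w'x]\neq 0$ (a spanning vector of the genuine diamond $L_{m+q}$) when $L_m$ is fake of type $1$, so Lemma~\ref{lemma:v1} is not needed.
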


\begin{proof}
Let $x$ and $y$ be standard generators of $L$.
According to  Proposition~\ref{prop:D}, there exists a derivation $D\in \Der(L)$ such that
$Dx=0$ and $Dy=[v_{1}y]=[yx^{q-2}y]$.

Set $U_{1}=Y$ and for $j> 0$ define recursively
\[
U_{j+1}=\begin{cases}
[U_{j}X] &\textrm{if $[U_{j}Y]=0$,}\\
[U_{j}Y] &\textrm{otherwise}.
\end{cases}
\]
In particular, $U_{2}=[YX]=-2[v_{2}x]$ and $[YXY]=-2[v_{2}x[v_{1}x]]=0$.

We now show that $M$ is an algebra of maximal class with at most two $2$-step centralizers.

Suppose by induction that $U_{j}$ lies in a diamond $L_{m}$ of infinite type of $L$, for some $j\geq 2$. Then $L_{m+q-1}$ is the next diamond of $L$, either of infinite type or fake of type $1$. We can also assume $U_{j}=[vx]$ for some $0\neq v \in L_{m-1}$,
up to a nonzero multiplicative coefficient. Since $[U_{j}Y]=[vx[v_{1}x]]=[vxyx^{q-1}]=0$, we have $U_{j+1}=[U_{j}X]=-2[wx]$, where $w=[vxyx^{q-3}]$ spans $L_{m+q-2}$.

On the other hand, if $U_{j}$ lies in a fake diamond $L_{m}$ of type $1$, then $L_{m+q}$ is the next diamond in $L$, of infinite type. We can assume $U_{j}=[vx]$, for $0\neq v \in L_{m-1}$. We have $[U_{j}X]=0$, hence $U_{j+1}=[U_{j}Y]=[wx]$, where $w=[vxyx^{q-2}]$ spans $L_{m+q-1}$.

We conclude that $M\in \mathcal{M}$, as desired.
\end{proof}

We are now ready to establish the announced goal of this section.

\begin{theorem}\label{thm:one-to-one}
Let $L\in \mathcal{T}_{q,2}$, then $L$ is isomorphic to
an algebra $\mathcal{T}_{q,2}(M)$,
for a unique $M\in\mathcal{M}$.
\end{theorem}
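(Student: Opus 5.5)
Given $L\in\mathcal{T}_{q,2}$, I take $M$ to be the algebra produced by Proposition~\ref{prop:LtoM}, namely the subalgebra of $L+\F D$ generated by $X=D$ and $Y=[yx^{q-1}]$. That proposition already gives $M\in\mathcal{M}$. Let $L'=\mathcal{T}_{q,2}(M)$ be the algebra of Proposition~\ref{prop:MtoL}, generated by $x'=1\otimes\partial$ and $y'=X\otimes\ee^{(q-2)}+Y\otimes\ee^{(q-1)}$. To prove $L\cong L'$, the key observation is that a Nottingham algebra is determined up to isomorphism by the degrees and types of its diamonds (Theorem~\ref{thm:distance} and the remarks after it). Both $L$ and $L'$ lie in $\mathcal{T}_{q,2}$, so all their diamonds after $L_q$ have infinite type or are fake of type $1$. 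It therefore suffices to show that the two diamond patterns coincide, that is, that whenever a genuine diamond $L_m$ is followed by another, the next one is at $m+q-1$ in $L$ exactly when it is at $m+q-1$ in $L'$ (rather than at $m+2q-1$).

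I will encode both patterns through the sequence of two-step centralizers of $M$. For $L'$, the sketch of proof of Proposition~\ref{prop:MtoL} already gives this: the diamond whose preceding component is spanned by $U_i\otimes\ee^{(1)}$ has infinite type if $[U_iY]=0$, and is fake of type $1$ if $[U_iX]=0$. For $L$, the proof of Proposition~\ref{prop:LtoM} shows, by induction along the diamonds, that $U_j$ is (up to a nonzero scalar) $[vx]$ for the element $v$ spanning the component preceding the $j$-th diamond after $L_q$ (counting the third diamond as $j=2$). It also shows that $[U_jY]=0$ exactly when that diamond has infinite type, and that $[U_jX]=0$ exactly when it is fake of type $1$. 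Matching the indexing shows the two diamond sequences agree term by term, with the same degrees, since the step is $q-1$ or $2q-1$ according to the same rule. Hence $L\cong L'$.

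The alternative would be to write down an explicit map $L\to L'$ sending $x\mapsto x'$ and $y\mapsto y'$ and to argue by presentations, but the reduction to diamond patterns avoids this. For uniqueness, suppose $\mathcal{T}_{q,2}(M)\cong\mathcal{T}_{q,2}(M')$. The diamond pattern of $\mathcal{T}_{q,2}(M)$ determines the sequence $C_i$ of two-step centralizers of $M$, via the same correspondence between infinite type and $C_i=\F Y$, and between fake type $1$ and $C_i=\F X$. The sequence of centralizers in turn determines $M$ up to isomorphism. An isomorphism of Nottingham algebras preserves $\F x$ and $\F y$, since these are determined by Equation~\eqref{eq:second_diamond_type}, so it preserves the diamond pattern. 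Hence $M\cong M'$.

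The main obstacle is the bookkeeping of the indexing. I must check that the first diamond $L_{2q-1}$ corresponds to $U_2=[YX]$ with $C_2=\F Y$, which holds because $L_{2q-1}$ is genuine and $[YXY]=0$. I must also check that the normalization of generators ($x=-1\otimes\partial$ versus $1\otimes\partial$) only introduces nonzero scalars, and that the role of $X$ and $Y$ in $M$ (with $Y\in C_2$) is forced, so that uniqueness is not spoiled by swapping the two centralizers.
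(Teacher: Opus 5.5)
Your proposal is correct and is essentially the paper's argument. You take $M$ from Proposition~\ref{prop:LtoM}, form $\mathcal{T}_{q,2}(M)$, and match its diamond pattern with that of $L$ term by term via the dichotomy $[U_iY]=0$ (infinite type) versus $[U_iX]=0$ (fake of type $1$), relying on the fact that the degrees and types of the diamonds determine a Nottingham algebra. Your explicit uniqueness step (the diamond pattern of $\mathcal{T}_{q,2}(M)$ recovers the two-step centralizer sequence of $M$, hence $M$ itself) spells out a point the paper's proof leaves implicit.
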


\begin{proof}
Let $x$ e $y$ be standard generators of $L$, and let $X=D$ and $Y=[yx^{q-1}]$ be as in Proposition~\ref{prop:LtoM}. According to Proposition~\ref{prop:LtoM}, the algebra $M$ generated by $X$ and $Y$ is an algebra of maximal class, with exactly two $2$-step centralizers.

Consider $M\otimes \F[\ee;q]+\F\cdot(1\otimes \partial)$,  and let $T$  be the subalgebra generated by
\[
\Tilde{x} =-1\otimes \partial \quad \textrm{and} \quad \Tilde{y}=X\otimes \ee^{(q-2)}+Y\otimes \ee^{(q-1)}=D\otimes \ee^{(q-2)}+[v_{1}x]\otimes \ee^{(q-1)}.
\]
We already know that $T\in \mathcal{T}_{q,2}$.
Setting  $\tilde v_{1}=[\tilde y \tilde x^{q-2}]=X\otimes 1+Y\otimes \ee^{(1)}$,
the homogeneous component  $T_q$ is  spanned by $[\Tilde{v}_{1}\Tilde{x}]=Y\otimes 1=[v_1 x]\otimes 1$ and
$
[\Tilde{v}_{1}\Tilde{y}]
=-2 U_{2}\otimes \ee^{(q-1)}=4[v_{2}x]\otimes \ee^{(q-1)}.
$

Each one-dimensional homogeneous component $T_{n-1}$ of $T$ that is spanned
by an element of the form $\tilde v=U_{i}\otimes \ee^{(1)}$, is just before a diamond of $T$.
Furthermore, $U_{i}$ lies in a diamond $L_{m}$ of $L$ and we can assume $U_{i}=[vx]$,
for some $0\neq v\in L_{m-1}$.

If $L_{m}$ has infinite type, then $[U_{i}Y]=0$ and  $T_{n}$ is a diamond of infinite type,
spanned by $[\tilde v \tilde x]=U_{i}\otimes 1=[vx]\otimes 1$ and
$[\tilde v  \tilde y]=-U_{i+1}\otimes \ee^{(q-1)}=2[wx]\otimes \ee^{(q-1)}$,
where $w=[vxyx^{q-3}]$.
If $L_{m}$ is a fake diamond of type $1$, then  $[U_{i}X]= 0$ and  $T_{n}$
is a fake diamond of type $1$, since
$[\tilde v \tilde x]=U_{i}\otimes 1=[vx]\otimes 1$ and
$[\tilde v \tilde y]=[\tilde v  \tilde x \tilde x]=0$.
\end{proof}

\section{A membership criterion for $\mathcal{T}_{q,2}$}\label{sec:mixed}

The aim of this section is to prove that if the structure of a Nottingham algebra $L$
matches that of an algebra in $\mathcal{T}_{q,2}$
up to the first diamond of type $1$, then $L\in\mathcal{T}_{q,2}$.
This is expressed rigorously in the following result.

\begin{theorem}\label{thm:uniqueness}
Let $L$ be a Nottingham algebra with second diamond $L_{q}$.
Suppose $L$ has diamonds of infinite type in each degree $k(q-1)+1$
with $2\leq k <2p^s$,
and a fake diamond of type $1$ in degree $2p^s(q-1)+1$, for some $s\geq 1$.
Then each diamond $L_{m}$ of $L$ is either of infinite type
or fake of type $1$.
In the latter case we also have $[L_{m+q-2}y]=0$.
In particular, we have $L\in \mathcal{T}_{q,2}$.
\end{theorem}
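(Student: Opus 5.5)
We argue by induction along the sequence of diamonds, showing that the portion of $L$ up to each successive diamond coincides with that of an algebra in $\mathcal{T}_{q,2}$. The proof runs in parallel with the proof of Proposition~\ref{prop:D}. Theorem~\ref{thm:distance} fixes the locations of the diamonds once their types are known, and $y$ centralizes all remaining components. So the whole task is to determine the type of each new diamond, together with the extra relation $[L_{m+q-2}y]=0$ after a fake diamond of type $1$ (which forces the next diamond into degree $m+q$ rather than $m+q-1$). The hypothesis supplies the base: the initial string of $2p^s-1$ diamonds of infinite type, ending with the fake diamond of type $1$ in degree $2p^s(q-1)+1$. That initial segment matches the initial segment of some $\mathcal{T}_{q,2}(M)$; the relevant $M$ has $X$ first occurring as two-step centralizer at the appropriate position.

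The key tool we would build is a partial version of the derivation $D$ of Proposition~\ref{prop:D}. Inductively, suppose $L$ agrees with a member of $\mathcal{T}_{q,2}$ up to a diamond $L_m$. Then the argument in the proof of Proposition~\ref{prop:D} shows that $D$, defined by $Dx=0$ and $Dy=[v_1y]$, is a well-defined derivation of the free algebra modulo relations up to degree about $m$. It therefore maps $L_{\le m}$ into $L$ consistently, with $D[vx]=-2[wx]$ or $0$ as in Assertions~(2) and~(3). The type of the next diamond is then determined by evaluating suitable generalized Jacobi identities, using the support argument to kill most terms. Concretely, at a diamond $L_{m'}$ following an infinite-type diamond $L_m$ we would expand $0=[u[yx^{q-1}]]$ and $0=[u[v_1y]]$-type identities for $u$ spanning $L_{m'-q}$, in the spirit of Lemmas~\ref{lemma:v1} and~\ref{le:type_1}. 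The aim is to show that the type $\mu$ of $L_{m'}$ satisfies a relation forcing $\mu=\infty$, or else $\mu\in\{0,1\}$. In the fake case, Theorem~\ref{thm:distance}(b) leaves two options, and a second identity, using $D$ to transport the relation from the previous fake diamond, should give $[L_{m'+q-2}y]=0$. The identity would most naturally compare $[w'y]$ with $D$ applied to the element preceding it.

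The main obstacle is to exclude finite types $\mu\ne0,1$ (and type $0$ read at degree $m+q-1$) for diamonds after the first fake one. The local relations at a single diamond do not see $p^s$, so the argument must use the derivation, or equivalently the embedded maximal-class algebra of Proposition~\ref{prop:LtoM}. Our plan is to note that the elements $[vx]$ at successive diamonds, acted on by $X=D$ and $Y=[yx^{q-1}]$, generate a graded algebra of maximal class up to the current degree. We would then invoke the known theory of algebras of maximal class \cite{CMN,CN}, whose two-step centralizer patterns are rigid once two distinct centralizers have appeared. A finite type $\mu$ would translate into a third two-step centralizer $X+cY$, and this should contradict a Jacobi identity of the form $[U_i[YX^{k}]]$ computed in $L$. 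If this route fails, the fallback is a direct calculation. For a diamond of putative type $\mu$ at distance $k(q-1)$ after the last fake diamond, we would expand $[v'[yx^{q-1}]^{k}]$-style identities via Lucas' theorem, mirroring the analysis in \cite{AviMat:earliest}. We expect $\binom{k}{1}$-type coefficients to give $\mu^{-1}=0$ unless $k\equiv0\pmod p$, and in that case the fake-of-type-$1$ option is the only other possibility. Finally, membership in $\mathcal{T}_{q,2}$ follows because the distances between genuine diamonds are then $q-1$ or $2q-1$.
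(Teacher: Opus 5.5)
Your proposal has a genuine gap at exactly the step you identify as the main obstacle, and the tool you propose for it is circular. The derivation $D$ of Proposition~\ref{prop:D} is a consequence of membership in $\mathcal{T}_{q,2}$, not a means of proving it. Since $D$ has degree $q-1$, checking $DR_k\subseteq R$ requires the relations of $L$ in degree $k+q-1$. The inductive step of that proof also explicitly uses the type of the \emph{next} diamond: for instance, $Dw$ is computed differently according to whether $L_{m+q-1}$ has infinite type or is fake of type $1$.

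If you only know that $L$ agrees with some $\mathcal{T}_{q,2}(M)$ up to degree $N$, then $D$ is merely a derivation of the truncation $L/\bigoplus_{i>N}L_i$. Nothing guarantees that it lifts one degree further, so it gives no information about the next diamond. The same objection applies to the maximal-class algebra of Proposition~\ref{prop:LtoM}, which is built from $D$. Moreover, the rigidity you want to quote is not available: algebras of maximal class with more than two distinct two-step centralizers do exist (see \cite{CMN,CN}). So a putative centralizer $X+cY$ gives no contradiction by itself, and you have not shown that a finite type would produce one anyway. The fallback is only a heuristic, and its predicted mechanism (coefficients vanishing when $k\equiv 0\pmod p$) is not what actually happens.

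The missing idea is that $p^s$ is recorded in the initial segment. Its relations can be transported past the current fake diamond using only the inner derivations $\ad v_1$ and $\ad v_2$, which exist in every Nottingham algebra. Let $v_b$ span $L_{m-1}$, with $L_m$ fake of type $1$ and $[L_{m+q-2}y]=0$, and let $v_{r+1}=[v_2v_1^{r-1}]$. Expanding $0=[v_b[v_{r+1}yx]]+[v_b[v_{r+1}xy]]$ by Lemmas~\ref{lemma:v1}--\ref{le:type_1} gives $(2+(-1)^r)\mu^{-1}[v_{b+r+1}y]=0$, and a contradiction if $\mu=0$. Since $p>3$ the coefficient $2+(-1)^r$ never vanishes. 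Hence for $r<2p^s$ the new diamond has infinite type or is fake of type $1$ with the extra relation (Proposition~\ref{prop:diamond_r}).

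Expanding $0=[v_bx[v_{2p^s}y]]$ then forces a diamond of type $1$ no later than $r=2p^s$ (Proposition~\ref{prop:diamond_1}). This bound is what keeps the induction going, because the relations $[v_{r+1}yx]+[v_{r+1}xy]=0$ are only available for $r+1<2p^s$.

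Your base case is also incomplete. For the first fake diamond, $m=2p^s(q-1)+1$, there is no earlier fake diamond to transport from. Theorem~\ref{thm:distance}(b) together with Lemma~\ref{le:one_past_L_m} still allows $L_{m+q-1}$ to be a genuine diamond of infinite type, which would put $L$ outside $\mathcal{T}_{q,2}$. The paper rules this out in Proposition~\ref{prop:dichotomy_first} by expanding $[uu]=0$ with $u=[v_{p^s}xyx^{(q-3)/2}]$.
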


We divide the proof into several steps.
To start the ball rolling, an argument in~\cite[Subsection~5.2]{AviMat:earliest}
guarantees that, if a diamond of type $1$ of a Nottingham algebra
is preceded by a diamond of infinite type, then it is followed by a diamond of infinite type,
at a degree difference of either $q-1$ or $q$.
For the reader's convenience we state this fact as a lemma
and quote its proof in a more formal way.

\begin{lemma}[Subsection~5.2 of~\cite{AviMat:earliest}]\label{le:one_past_L_m}
Let $L$ be a Nottingham algebra, with $L_{2q-1}$ a diamond of infinite type.
Let $L_{m}$ be a diamond of $L$ of type $1$, for some $m$,
with $L_{m-q+1}$ a diamond of infinite type.
Then either
$L_{m+q-1}$ or $L_{m+q}$ is a diamond of infinite type.
\end{lemma}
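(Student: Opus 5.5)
Let $0\neq v\in L_{m-2}$ span the component immediately preceding $L_{m-1}$; the proof works with the double grading and the support argument. Since $L_m$ is fake of type $1$, the component $L_{m-1}$ is spanned by $u=[vx]$. The defining relations are $[uy]=0$ and $[uxx]=0$, and $L_{m+1}$ is spanned by $[uxy]$. Here we have slightly abused notation: with the convention of Definition~\ref{def:type-fake}, $w=v$ spans $L_{m-1}$, $[vx]$ spans $L_m$, and $[vxy]$ spans $L_{m+1}$.

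By Theorem~\ref{thm:distance}(b), either $L_{m+q-1}$ or $L_{m+q}$ is a diamond. By part~(d), $y$ centralizes $L_{m+1},\dots,L_{m+q-3}$, so $L_{m+i}$ is spanned by $z_i=[vxyx^{i-1}]$ for $1\le i\le q-2$. What remains is to show that the diamond which occurs has infinite type, and in particular is not fake of type $0$ or $1$ and not of finite type $\mu$.

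We first treat the case where $L_{m+q-1}$ is a diamond. Put $w'=z_{q-2}$. We must show $[w'yx]=-[w'xy]$, and also exclude degeneracies, meaning that $[w'x]$ and $[w'y]$ are not zero. The tool is a generalized Jacobi identity of the same shape as the one used to fix the type of $L_q$. Concretely, we expand
\[
0=[u'\,[v_1x\,y\,x^{j}]]
\]
or, more naturally,
\[
[v\,[yx^{q-1}]]\ \text{ and }\ [vx\,[yx^{q-2}]],
\]
with $u'$ a suitable element of bidegree just before $L_{m-q+1}$. Because $(\ad x)^q=0$ and we know the local structure around $L_{m-q+1}$ (infinite type) and around $L_m$ (type $1$), the support argument kills all terms but two or three, and by Lucas' theorem the surviving coefficients are $\pm1$ or $\pm2$. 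The relation $[yx^{q-1}]=[v_1x]$ is available, together with $[v_1x[v_1y]]$-type identities as in Lemma~\ref{lemma:v1}. I would compare the identities obtained by bracketing the element $[vxy]$, which spans $L_{m+1}$, with $[v_1x]$ and with $[v_1y]$, where $v_1$ spans $L_{q-1}$. These bracket relations encode how $L_m$ interacts with the second diamond. Each produces a linear relation among $[w'xy]$ and $[w'yx]$. The coefficients are controlled by the types $-1$ of $L_q$, $1$ of $L_m$ and $\infty$ of $L_{m-q+1}$, and the combined result should be $\mu=\infty$. That $[w'y]\neq0$, that is, the diamond is not fake of type $1$, is consistent with the alternative in the statement. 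In that case $L_{m+q}$ is the candidate, and we argue with $w''=z_{q-1}$, which spans $L_{m+q-1}$, in the same way.

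To exclude type $0$ in the second case we use the hypothesis that $L_{m-q+1}$ has infinite type, so that the analogue of Theorem~\ref{thm:distance}(c) applies with a shift. The identity $[v\,[v_1 x\,y]]$ then evaluates to a nonzero multiple of $[w''xy]+[w''yx]$ only if the type is infinite.

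The main obstacle is the bookkeeping of the generalized Jacobi expansion. One must choose the right pair of elements so that exactly the terms through the diamond $L_{m+q-1}$ or $L_{m+q}$ survive. One must also check carefully that the terms passing through the fake diamond $L_m$, where $[uy]=0$ but $[uxy]\neq0$, contribute with the correct coefficient. I would first try the element $[v_1x]=[yx^{q-1}]$ bracketed against $v$, since $\ad$ of it acts like $D$-shifted data. I would then fall back on $[yx^{q-2}y]$ if that degenerates.
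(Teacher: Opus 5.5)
There is a genuine gap. Your proposal never identifies a relation whose expansion constrains $[wxy]$ and $[wyx]$, where $w=[vxyx^{q-3}]\in L_{m+q-2}$. More seriously, it never uses the hypothesis that $L_{2q-1}$ has infinite type.

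The elements you suggest expanding, such as $[v[yx^{q-1}]]$, $[vx[yx^{q-2}]]$ or $[vxy[v_1x]]$, are not zero. Their generalized Jacobi expansions therefore only evaluate these products (this is the content of Lemmas~\ref{lemma:v1} and~\ref{le:type_1}) and impose no condition. Similarly, $[v_1xyx^{j}]\neq 0$ for $j\le q-2$ (for $j=q-3$ it is $v_2$ itself), so nothing makes $[u'[v_1xyx^{j}]]$ vanish.

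Every ingredient you list is also available in the algebras $\mathcal{T}_{q,1}(M)$ of Subsection~\ref{subsec:T(M)}. These ingredients are the type $-1$ of $L_q$ together with the $v_1$-relations, which hold in every Nottingham algebra; the infinite type of $L_{m-q+1}$; the type $1$ of $L_m$; and Theorem~\ref{thm:distance}. In $\mathcal{T}_{q,1}(M)$, $L_{2q-1}$ is fake of type $1$. There, a fake diamond $L_m$ of type $1$ lying $q-1$ above a diamond of infinite type can be followed by another fake diamond $L_{m+q}$ of type $1$, so the conclusion fails. Hence no argument built only from your ingredients can work: the type of the third diamond must enter.

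The missing idea is the relation $[v_2xy]+[v_2yx]=0$, which that hypothesis supplies. You already had the right auxiliary element. With $u$ spanning $L_{m-q}$ (your $u'$), $v=[uxyx^{q-3}]$ and $w=[vxyx^{q-3}]$, the paper expands
\[
0=[ux[v_2xy]]+[ux[v_2yx]]=[uxv_2xy]-2[uxyv_2x]+[uxyxv_2]+[uxv_2yx].
\]
Lemma~\ref{lemma:v2ext} with $\mu=1$ (infinite type at $L_{m-q+1}$ followed by type $1$ at $L_m$) turns this into the single relation $[wxxy]+[wxyx]+[wyxx]=0$. That one relation settles both alternatives:
\begin{itemize}
\item If $[wy]\neq 0$, then $L_{m+q-1}$ is a diamond, so $[wxx]=0$. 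One gets $[wxy]+[wyx]=0$, so $L_{m+q-1}$ has infinite type.
\item If $[wy]=0$, then with $z=[wx]$ the relation reads $[zxy]+[zyx]=0$, so $L_{m+q}$ has infinite type.
\end{itemize}
So you need neither separate treatments of the two cases nor a shifted version of Theorem~\ref{thm:distance}(c) to exclude type $0$.
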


\begin{proof}
According to Theorem~\ref{thm:distance}, either $L_{m+q-1}$ or $L_{m+q}$ is a diamond.
Let $u$ be a nonzero element of $L_{m-q}$, set
$v=[uxyx^{q-3}]\in L_{m-1}$, and $w=[vxyx^{q-3}]\in L_{m+q-2}$.
Expanding
\[
[ux[v_{2}xy]]+[ux[v_{2}yx]]=0
\]
we obtain
\[
[uxv_{2}xy]-2[uxyv_{2}x]+[uxyxv_{2}]+[uxv_{2}yx]=0.
\]
By means of Lemma~\ref{lemma:v2ext} with $\mu=1$ we deduce $[wxxy]+[wxyx]+[wyxx]=0$.
If $[wy]\neq 0$, then because $[wxx]=0$ we have $[wxy]+[wyx]=0$,
and hence $L_{m+q-1}$ is a diamond of infinite type.
However, if $[wy]=0$, then $L_{m+q}$ is a diamond of infinite type,
because $[zxy]+[zyx]=0$ for  $z=[wx]$.
\end{proof}

According to Lemma~\ref{le:one_past_L_m},
the degree difference between the last diamond
preceding the fake diamond $L_m$, and the first diamond following it
(both of which have infinite type),
equals either $2q-2$ or $2q-1$.
As it turns out, under the hypotheses of Theorem~\ref{thm:uniqueness},
 the first possibility cannot occur
if $L_m$ is the first diamond of type $1$ in order of occurrence,
that is, when $m=2p^s(q-1)+1$.
This is the content of the next proposition.
Before doing that we introduce another piece of notation to describe the structure of $L$
up to degree $2p^s(q-1)+1$.
\begin{definition}
We recursively define
\[
v_{k}=[v_{k-1}xyx^{q-3}],\quad \textrm{for $2<k\leq 2p^{s}$}.
\]
\end{definition}
In particular, for $1<k<2p^s$ the element $v_{k}$ spans the homogeneous component just before a diamond of infinite type, thus $[v_{k}yx]+[v_{k}xy]=0$, while $[v_{2p^s}y]=0$.
\begin{prop}\label{prop:dichotomy_first}
If $L$ is as in Theorem~\ref{thm:uniqueness} and $m=2p^s(q-1)+1$,
then $[L_{m+q-2}y]=0$.
\end{prop}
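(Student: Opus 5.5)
We argue by contradiction: assume $[L_{m+q-2}y]\neq 0$. Then, by Lemma~\ref{le:one_past_L_m}, $L_{m+q-1}$ is a diamond of infinite type; in the other alternative $L_{m+q}$ is the diamond and $y$ centralizes $L_{m+q-2}$ by Theorem~\ref{thm:distance}(d). Write $v=v_{2p^s}$, which spans $L_{m-1}$ and satisfies $[vy]=0$ and $[vxx]=0$. Put $w=[vxyx^{q-3}]$, which spans $L_{m+q-2}$. Under our assumption the support of $L$ up to degree $m+q$ is completely determined. It consists of the regular pattern $S_\le(q)$ up to $L_{m}$ with a single fake diamond at $L_m$, followed by an infinite-type diamond at $L_{m+q-1}$. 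The plan is to find an identity in degree $m+q$ or $m+q-1$ that forces $[wy]=0$.

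The natural tool is the generalized Jacobi identity applied to a bracket of two earlier elements whose degrees add up to $m+q-1$ or $m+q$, combined with the support argument to kill most terms. Since $m-1=2p^s(q-1)$, I would use the symmetric splitting $[v_{p^s}\,x\,[\,\cdot\,]]$, or more precisely expand
\[
0=[u,[u']]
\]
with $u,u'$ both of degree about $p^s(q-1)$. For example, take $u=[v_{p^s}x]$ and $u'$ a suitable element like $[v_1 x y x^{j}]$ bracketed via $(\ad x)^{p^s(q-1)}$-type expansions. Lucas' theorem should make the binomial coefficients $\binom{p^s(q-1)+\cdot}{\cdot}$ vanish except at a few positions. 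Alternatively, and more in the spirit of the paper, I would first establish (or quote from Section~\ref{sec:gc}) a formula of the type of Lemma~\ref{lemma:v2ext} for $[u[v_k\,x\,y]]$ and $[u[v_k yx]]$ with $k=p^s$. Applying it to $u=[v_{p^s}x]$ (so that $u$ lies just after $v_{p^s}$ and the bracket reaches degree $m+q-1$ or so) then gives a linear relation among $[wxy]$, $[wyx]$ and $[wxx]$.

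The key computation is the expansion of $[v_{p^s}x[v_{p^s}xy]]+[v_{p^s}x[v_{p^s}yx]]=0$, which holds because $L_{p^s(q-1)+1}$ has infinite type. This parallels the proof of Lemma~\ref{le:one_past_L_m}, but with the "long" element $v_{p^s}$ in place of $v_2$. Expanding $[a[b]]$ with $b$ an iterated bracket of length about $p^s(q-1)$, each term $[a\,x^{i}\,y\,x^{j}\cdots]$ is kept or discarded by the support argument; only terms passing through the diamonds in the right bidegrees survive. Because the types of the intermediate diamonds are all $\infty$ except at $L_m$ (type $1$), the surviving contributions should be governed by binomial coefficients modulo $p$ of the form $\binom{2p^s-1}{k}$ or similar. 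The type-$1$ diamond at $L_m$ breaks the symmetry, and I expect the net relation to be $c([wxy]+[wyx])+c'[wyx]=0$ with $c'\neq0$, which together with the infinite-type relation forces $[wy]=0$, the desired contradiction.

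The main obstacle is controlling this long expansion. One has to identify exactly which terms survive the support argument across roughly $2p^s$ diamonds and evaluate the resulting coefficient modulo $p$, showing it is nonzero precisely because $2p^s$ is where the first type-$1$ diamond occurs. I would first work out the case $s$ small symbolically to guess the surviving coefficient, and then prove a general lemma for $[u[v_kxy]]$ by induction on $k$, in the style of Lemma~\ref{lemma:v2ext}, using Lucas' theorem for the binomials.
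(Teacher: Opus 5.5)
Your proposal has a genuine gap, on two levels. First, the identity you single out as the key computation lives in the wrong degree. Since $v_k\in L_{k(q-1)}$, we have $[v_{p^s}x]\in L_{p^s(q-1)+1}$ and $[v_{p^s}xy]\in L_{p^s(q-1)+2}$. So $[v_{p^s}x[v_{p^s}xy]]+[v_{p^s}x[v_{p^s}yx]]=0$ is a relation in $L_{2p^s(q-1)+3}=L_{m+2}$. That degree is already fixed by the hypotheses and lies below $w=v_{2p^s+1}\in L_{m+q-2}$, so this identity gives no information on $[wy]$ or $[wyx]$. (Matching degrees with the proof of Lemma~\ref{le:one_past_L_m} would require the first factor to be $[v_{p^s+1}x]$, not $[v_{p^s}x]$.) To reach $L_{m+q}$ you need two factors whose degrees add up to $m+q=2p^s(q-1)+q+1$. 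The paper takes $u=[v_{p^s}xyx^{(q-3)/2}]$, of degree exactly $(m+q)/2$, and expands $0=[uu]$. It does this first by the generalized Jacobi identity in $x$, and then through $[a[v_{p^s}xy]]=[a[v_{p^s}x]y]-[ay[v_{p^s}x]]$; the support argument leaves only a handful of terms.

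Second, and more seriously, you give no way to compute the adjoint action of the long element $v_{p^s}$. The generalized Jacobi identity handles $[a[bc^n]]$ with a single repeated $c$, while $v_{p^s}$ is an alternating bracket in $x$ and $y$ of length $p^s(q-1)$. Also, ``$(\ad x)^{p^s(q-1)}$-type expansions'' are empty because $(\ad x)^q=0$. The missing idea is that Lemma~\ref{lemma:v1} with $\mu=\infty$ gives $[v_kv_1]=v_{k+1}$, hence $v_{p^s}=[v_2v_1^{p^s-2}]$, and therefore
\[
[a\,v_{p^s}]=\sum_{i=0}^{p^s-2}(-1)^i\binom{p^s-2}{i}[a\,v_1^{i}\,v_2\,v_1^{p^s-2-i}].
\]
Here $\ad v_1$ and $\ad v_2$ near each diamond are known from Lemmas~\ref{lemma:v1}, \ref{lemma:v2}, \ref{lemma:v2ext} and~\ref{le:type_1}. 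Lemma~\ref{lemma:v2} (for instance $[v_kv_2]=\mu^{-1}v_{k+2}=0$ between diamonds of infinite type) and the support argument kill all but the last one or two terms of each such sum. Those survivors are evaluated at the type-$1$ diamond; for instance $[v_{p^s+1}v_{p^s}]=-[v_{2p^s-1}v_2]=2v_{2p^s+1}$ by Lemma~\ref{lemma:v2ext} with $\mu=1$. This is where your heuristic that the type-$1$ diamond ``breaks the symmetry'' becomes an actual computation. Carried through all terms it yields $\tfrac12[v_{2p^s+1}yx]=0$, so $L_{m+q}=0$ by the infinite-type relations at $L_{m+q-1}$, contradicting $\dim L=\infty$. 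Without this device, your final step (``I expect the net relation to be \dots with $c'\neq0$'') is a guess that carries the entire content of the proposition.
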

\begin{proof}
Assume by way of contradiction $[L_{m+q-2}y]\neq 0$, whence $L_{m+q-1}$ is a diamond, necessarily of infinite type.
Set $v_{2p^{s}+1}=[v_{2p^s}xyx^{q-3}]$, thus $[v_{2p^{s}+1}yx]$ spans the homogeneous component $L_{m+q}$.
We will show that $[v_{2p^{s}+1}yx]$ vanishes, against the hypothesis that $L$ has infinite dimension.

Consider the element $u=[v_{p^s}xyx^{(q-3)/2}]$. We expand
\begin{align*}
 0&=\pm  [uu] =[u[v_{p^s}xyx^{(q-3)/2}]]=
 \sum_{i=0}^{(q-3)/2}(-1)^{i}\binom{(q-3)/2}{i}[ux^{i}[v_{p^s}xy]x^{(q-3)/2 -i}]\\
 &=\sum_{i=0}^{(q-3)/2}(-1)^{i}\binom{(q-3)/2}{i}[ux^{i}[v_{p^s}x]yx^{(q-3)/2 -i}]\\
 &\quad-\sum_{i=0}^{(q-3)/2}(-1)^{i}\binom{(q-3)/2}{i}[ux^{i}y[v_{p^s}x]x^{(q-3)/2 -i}].
\end{align*}
All terms of the former sum vanish except, possibly, for $i=(q-5)/2$ and $i=(q-3)/2$.
All terms of the latter sum vanish except, possibly, for $i=(q-3)/2$. Consequently, we find
\begin{equation}\label{eq:dichothomy_first}
\begin{split}
 0=&-\frac{3}{2}[v_{p^{s}+1}^{-1}v_{p^s}xyx]+\frac{3}{2}[v_{p^{s}+1}v_{p^s}yx]-[v_{p^{s}+1}v_{p^s}xy]
 +[v_{p^{s}+1}xv_{p^s}y]\\
 &+[v_{p^{s}+1}yv_{p^s}x]+[v_{p^{s}+1}xyv_{p^s}],
\end{split}
\end{equation}
where $v_{p^{s}+1}^{-1}=[v_{p^s}xyx^{q-4}]$ according to Notation~\eqref{notation:v^-1}.

We expand the Lie products in the above equation by means of
Lemmas~\ref{lemma:v1},~\ref{lemma:v2} and~\ref{lemma:v2ext}.
We start with
\[
[v_{p^{s}+1}v_{p^s}]=[v_{p^{s}+1}[v_{2}v_{1}^{p^{s}-2}]]=\sum_{i=0}^{p^{s}-2}
(-1)^i \binom{p^{s}-2}{i}[v_{p^{s}+1}v_{1}^iv_2v_{1}^{p^{s}-2-i}].
\]
The Lie product in the sum vanishes, except possibly when $i=p^{s}-2$, whence
\[
[v_{p^{s}+1}v_{p^s}]=-[v_{2p^{s}-1}v_{2}]=2v_{2 p^{s}+1}.
\]
Similarly, we have $[v_{p^{s}+1}xv_{p^s}]=[v_{2p^{s}+1}x]$ and
$[v_{p^{s}+1}yv_{p^s}]=[v_{2p^{s}+1}y]$.

We now compute
\[
[v_{p^{s}+1}xyv_{p^s}]=\sum_{i=0}^{p^{s}-2}(-1)^i\binom{p^{s}-2}{i}
[v_{p^{s}+1}xyv_{1}^{i}v_2 v_{1}^{p^{s}-2-i}].
\]
The Lie product in the sum vanishes, except possibly when $i=p^s-3$ and $i=p^s-2$. Hence, we obtain
\begin{align*}
[v_{p^{s}+1}xyv_{p^s}]
&=-2[v_{p^{s}+1}xyv_{1}^{p^{s}-3}v_2v_1]
-[v_{p^{s}+1}xyv_{1}^{p^{s}-2}v_2]
\\&=4[v_{2p^{s}+1}yx]+2[v_{2p^{s}+1}xy]
-[v_{2p^{s}+1}xy]-3[v_{2p^{s}+1}yx]
\\&=[v_{2p^{s}+1}yx]+[v_{2p^{s}+1}xy].
\end{align*}
Finally, we have
\[
[v_{p^{s}+1}^{-1}v_{p^s}]=-[v_{p^{s}+1}^{-1}v_{1}^{p^{s}-2}v_2]
=-[v_{2p^{s}-1}^{-1}v_2]=3v_{2p^{s}+1}^{-1}.
\]
Substituting in Equation~\eqref{eq:dichothomy_first},
we get $0=(1/2)[v_{2p^{s}+1}yx]$, as desired.
\end{proof}

\begin{rem}
\label{rem:structure}
It follows that the diamond structure of $L$ is uniquely determined up to degree $(2p^{s}+1)(q-1)+3$.
Each homogeneous component in degree $k(q-1)+1$ with $1\leq k \leq 2p^{s}$ is a diamond.
Each of these diamonds, except for $L_{q}$ and $L_{m}$ with $m=2p^s(q-1)+1$, has infinite type.
The second diamond $L_{q}$ has type $-1$, while $L_{m}$ is a fake diamond of type $1$ with $[L_{m+q-2}y]=0$.
Furthermore, the diamond $L_{(2p^{s}+1)(q-1)+2}$ has infinite type
because of Lemma~\ref{le:one_past_L_m}.
\end{rem}

These facts will form the base step of an inductive argument that proves
Theorem~\ref{thm:uniqueness}.
We will prove the inductive step in the following arguments.

Let  $L_{m}$ be an arbitrary diamond of type $1$ with $[L_{m+q-2}y]=0$,
for some $m\geq 2p^{s}(q-1)+1$.
Suppose that, for a certain $1<r<2p^s$, the homogeneous components $L_{m+q}, L_{m+2q-1}, \ldots, L_{m+(r-1)(q-1)+1}$ are diamonds of infinite type.
Theorem~\ref{thm:distance} implies that $L_{m+r(q-1)+1}$ is a diamond.
In the next proposition we prove that it has either infinite type or type $1$.
Also, in the latter case, $L_{m+(r+1)(q-1)+1}$ cannot be a genuine diamond.

\begin{prop}\label{prop:diamond_r}
Let $L$ as in Theorem~\ref{thm:uniqueness} and  $L_m$ be a diamond of type
$1$ for some $m\geq 2p^s(q-1)+1$, with $[L_{m+q-2}y]=0$. Let $2 \leq r<2p^s$ be an integer.
Suppose that the homogeneous components
\[
L_{m+q}, L_{m+2q-1}, \ldots , L_{m+(r-1)(q-1)+1}
\]
are diamonds of infinite type.

Then in degree $n = m+r(q-1)+1$ there is a diamond either  of infinite type or of type $1$.
In the latter case, we have $[L_{n+q-2}y]=0$.
\end{prop}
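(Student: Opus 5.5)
Write $u$ for a nonzero element of $L_{m-1}$, so $L_m$ is fake of type $1$ and $[ux]$ spans $L_m$. Let $w_1=[uxyx^{q-2}]\in L_{m+q-1}$, so that $L_{m+q}$ is spanned by $[w_1x]$ and $[w_1y]$. Recursively set $w_j=[w_{j-1}xyx^{q-3}]$ for $2\le j\le r$. Then $w_j$ spans the component immediately preceding the diamond $L_{m+(j-1)(q-1)+1}$, and $w_r$ precedes $L_n$. By Theorem~\ref{thm:distance}, $L_n$ is a (possibly fake) diamond and $y$ centralizes the intermediate components. So the task is to determine the relations among $[w_rxx]$, $[w_ryy]$, $[w_rxy]$, $[w_ryx]$.

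The plan mirrors the proofs of Lemma~\ref{le:one_past_L_m} and Proposition~\ref{prop:dichotomy_first}. The stretch $L_{m+q},\dots,L_n$ is an "initial segment" of length $r<2p^s$ of the pattern of the beginning of $L$. The idea is to transport the known relations $[v_rxy]+[v_ryx]=0$, valid since $r<2p^s$, by bracketing $ux$ with elements built from $v_j$'s.

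First step: I would prove an analogue of Lemmas~\ref{lemma:v1}, \ref{lemma:v2}, \ref{lemma:v2ext} adapted to this situation, namely $[w_jv_1]$ and $[w_jv_2]$ expressed via $w_{j+1}$, $w_{j+2}$ and their $x,y$-extensions. I expect these to follow from the generalized Jacobi identity and the support argument, exactly as in the base case, since the local bidegree pattern after $L_{m+q}$ coincides with that after $L_q$ shifted. Iterating gives $[w_1v_{r}]=c_r\,w_{r+1}$-type formulas via $v_r=[v_2v_1^{r-2}]$, with nonzero binomial constants because $r<2p^s$ does not hit the vanishing of Lucas coefficients.

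Second step: expand the vanishing of
\[
[ux[v_{r}xy]]+[ux[v_{r}yx]]=0
\]
exactly as in Lemma~\ref{le:one_past_L_m}, using the first step and $[L_{m+q-2}y]=0$ to kill terms. This should yield $[w_rxxy]+[w_rxyx]+[w_ryxx]=0$ together with $[w_rxx]=0$ and $[w_ryy]=0$, the latter by the support argument. As in that lemma, $[w_ry]\ne0$ then gives infinite type. Otherwise $L_n$ is fake of type $1$.

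Third step, which I expect to be the main obstacle: in the type $1$ case, show $[L_{n+q-2}y]=0$. I would argue by contradiction as in Proposition~\ref{prop:dichotomy_first}. Assume $[w_{r+1}y]\neq 0$, where $w_{r+1}=[w_rxyx^{q-3}]$. Then expand $0=[ux[v_jxyx^{(q-3)/2}\cdots]]$ for a suitable splitting of the segment in half, or bracket $w_1$ against $v_{r}$-type elements as there. The goal is to force $[w_{r+1}yx]=0$, contradicting infinite dimension. The delicate point is choosing the right element, whose analogue of $u=[v_{p^s}xyx^{(q-3)/2}]$ is no longer self-bracketing. I would instead use the derivation-like identity obtained from $[ux[v_rxyx^{q-3}]]$, and compare with the base pattern where $v_{r+1}$ precedes a genuine infinite-type diamond. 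I would check that the resulting coefficient, a small integer combination like the $1/2$ there, is nonzero modulo $p>3$.
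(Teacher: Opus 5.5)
Your proposal has a genuine gap: Step~2 cannot deliver the dichotomy, and Step~3 is not specified. A degree check first. $[ux[v_rxy]]+[ux[v_ryx]]$ lies in degree $(m-1)+1+r(q-1)+2=n+1$, not in the degree $n+2$ of $[w_rxxy]+[w_rxyx]+[w_ryxx]$.

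If you actually expand it, you need $[uxx]=0$ and $v_r=[v_2v_1^{r-2}]$. You also need Lemma~\ref{le:type_1}, which gives $[uxv_1]=w_1$, $[uxv_2]=w_2$, $[uxyv_1]=-[w_1y]$ and $[uxyv_2]=0$. Along the infinite-type stretch you use Lemmas~\ref{lemma:v1} and~\ref{lemma:v2}, and finally the direct computation $[uxyxv_2]=[w_2xy]+[w_2yx]$. The result is
\[
0=[uxv_rxy]+[uxv_ryx]-2[uxyv_rx]+[uxyxv_r]=\bigl(1+(-1)^r\bigr)\bigl([w_rxy]+[w_ryx]\bigr).
\]
For even $r$ this forces infinite type, and even excludes type $1$. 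For odd $r$ it is vacuous.

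This is structural, not bad luck. Any relation in degree $n+1$ or $n+2$ obtained this way has coefficients computed from the structure below $L_n$, so they do not depend on the type of $L_n$. Such a linear relation can single out one type or say nothing. It cannot yield ``infinite or $1$'', because both outcomes sit at the same component $L_n$. In Lemma~\ref{le:one_past_L_m} one relation could separate the alternatives only because they live in different degrees, $L_{m+q-1}$ versus $L_{m+q}$.

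Your branch ``$[w_ry]=0$, hence type $1$'' misreads that lemma for the same reason. If $L_n$ were fake of type $1$, the relation $[w_rxxy]+[w_rxyx]+[w_ryxx]=0$ you expect would reduce to $[w_rxyx]=0$. That is false, so the relation would exclude type $1$ rather than allow it.

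The missing idea, which is the paper's proof, is to let the unknown type enter multiplicatively by computing past $L_n$. Assume $L_n$ has type $\mu$. Bracket $u=v_b$ itself, not $[ux]$, with the relation $[v_{r+1}yx]+[v_{r+1}xy]=0$ of the next diamond of the initial segment. This lands in degree $n+q-1$, where $[v_{b+r+1}y]$ lives. Using $[v_by]=0$ and $[v_bv_{r+1}y]\in[L_{n+q-3}y]=0$, you get Equation~\eqref{eq:v_r+1}.

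Now expand $v_{r+1}=[v_2v_1^{r-1}]$ with Lemmas~\ref{le:type_1}, \ref{lemma:v1}, \ref{lemma:v2} and~\ref{lemma:v2ext}. Their last applications cross $L_n$ and so produce factors $\mu^{-1}$. Everything collapses to $(2+(-1)^r)\mu^{-1}[v_{b+r+1}y]=0$. The case $\mu=0$ uses $v_{b+r+1}=[v_{b+r}yx^{q-2}]$ and Remarks~\ref{rem:v1_mu_0} and~\ref{rem:v2ext_mu_0}; it leads to $(2+(-1)^r)[v_{b+r+1}y]=0$ and is excluded.

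Since $2+(-1)^r\in\{1,3\}$ is nonzero for $p>3$, this single identity is what produces both conclusions: either $\mu=\infty$, or $[L_{n+q-2}y]=0$ in the type-$1$ case. No separate contradiction argument modelled on the self-bracket of Proposition~\ref{prop:dichotomy_first} is needed.
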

\begin{proof}
Assume $L_{m+r(q-1)+1}$ is a diamond of type $\mu\in \F \cup \{\infty\}$.
Theorem~\ref{thm:distance} implies $[L_{m+r(q-1)+q-2}y]=0$.
Define recursively
\begin{equation}\label{eq:v_b+r_def}
v_{b+k}=[v_{b+k-1}xyx^{q-3}],
\end{equation}
for $1<k\leq r$, and
\[
v_{b+r+1}=\begin{cases}
[v_{b+r}xyx^{q-3}] &\textrm{if $\mu \neq 0$}\\
[v_{b+r}yx^{q-2}] &\textrm{otherwise}.
\end{cases}
\]
Thus each element $v_{b+k}$, $0\leq k \leq r$, spans the homogeneous component just preceding a diamond.
We expand the equation
\begin{align}\label{eq:v_r+1}
0&=[v_{b}[v_{r+1}yx]]+[v_{b}[v_{r+1}xy]]\\
&=[v_{b}v_{r+1}xy]-2[v_{b}xv_{r+1}y]+[v_{b}xyv_{r+1}],\nonumber
\end{align}
where we have used $[v_{b}v_{r+1}y]\in [L_{m+r(q-1)+q-2}y]=0$.
Assume $\mu\neq 0$ first. To compute the first summand of Equation~\eqref{eq:v_r+1}, we expand
\begin{align*}
[v_{b}v_{r+1}]&=\sum_{i=0}^{r-1}(-1)^i\binom{r-1}{i}[v_{b}v_{1}^{i}v_{2}v_{1}^{r-1-i}]\\
&=[v_{b}v_{2}v_{1}^{r-1}]+[v_{b}v_{1}^{r-1}v_{2}]=2[v_{b+2}^{-1}v_{1}^{r-1}]+2(-1)^{r-1}[v_{b+r-1}^{-1}v_{2}]\\
&=2(1+2\mu^{-1}+3(-1)^{r}\mu^{-1})v_{b+r+1}^{-1}.
\end{align*}
Similarly, we have
\begin{align*}
[v_{b}xv_{r+1}]&=[v_{b}xv_{2}v_{1}^{r-1}]+(-1)^{r-1}
[v_{b}xv_{1}^{r-1}v_{2}]
\\&=
[v_{b+2}v_{1}^{r-1}]+(-1)^{r-1}[v_{b+r-1}v_{2}]
\\&=
(1+\mu^{-1}+2(-1)^r \mu^{-1})v_{b+r+1},
\end{align*}
and
\[
[v_{b}xyv_{r+1}]=(-1)^{r-1}[v_{b}xyv_{1}^{r-1}v_2]=(-1)^{r}[v_{b+r-1}yv_{2}]=(-1)^{r+1}\mu^{-1}[v_{b+r+1}y].
\]
Substituting in Equation~\eqref{eq:v_r+1} we get
\[
(2+(-1)^{r})\mu^{-1}[v_{b+r+1}y]=0,
\]
whence $\mu=1$ and $[L_{m+(r+1)(q-1)}y]=0$, as desired.

When $\mu=0$, we have
\begin{align*}
[v_{b}v_{r+1}]&=(4+6(-1)^r)v_{b+r+1}^{-1},\\
[v_{b}xv_{r+1}]&=(1+2(-1)^r)v_{b+r+1},\\
[v_{b}xyv_{r+1}]&=(-1)^{r+1}[v_{b+r+1}y].
\end{align*}
Substituting in Equation~\eqref{eq:v_r+1} we get $(2+(-1)^r)[v_{b+r+1}y]=0$, a contradiction.
\end{proof}

We now prove that the difference in degrees of any two diamonds of type $1$
in $L$ is at most $2p^s(q-1)+1$, provided that the first of those two
diamonds is preceded by a diamond of infinite type.

\begin{prop}\label{prop:diamond_1}
Let $L$ as in Theorem~\ref{thm:uniqueness} and  $L_m$ be a diamond of type
$1$ for some $m\geq 2p^s(q-1)+1$, with $[L_{m+q-2}y]=0$.
Suppose that $L_{m-q+1}$ is a diamond of infinite type.
Then a diamond of type $1$ occurs in degree $n\leq m+2p^s(q-1)+1$.
Furthermore, $[L_{n+q-2}y]=0$.
\end{prop}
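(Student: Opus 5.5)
The plan is to iterate Proposition~\ref{prop:diamond_r} along the string of diamonds following $L_m$, and to rule out the string being too long by a separate calculation modelled on Proposition~\ref{prop:dichotomy_first}.

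First, $L_{m-q+1}$ has infinite type and $L_m$ has type $1$, so Lemma~\ref{le:one_past_L_m} applies. Combined with the hypothesis $[L_{m+q-2}y]=0$, it shows that $L_{m+q}$ is a diamond of infinite type; this uses Theorem~\ref{thm:distance}(b),(d), which exclude $L_{m+q-1}$. Next we apply Proposition~\ref{prop:diamond_r} with $r=2,3,\ldots$ in turn. At each step the diamond in degree $m+r(q-1)+1$ has either infinite type or type $1$. If it has type $1$, we stop with $n=m+r(q-1)+1\le m+2p^s(q-1)+1$, and the same proposition gives $[L_{n+q-2}y]=0$. If it has infinite type, we continue with $r+1$. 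Since the proposition is available for every $r<2p^s$, either we stop at some $r<2p^s$, or all of
\[
L_{m+q},\ L_{m+2q-1},\ \ldots,\ L_{m+(2p^s-1)(q-1)+1}
\]
are diamonds of infinite type. In the latter case Theorem~\ref{thm:distance}(a) shows that $n=m+2p^s(q-1)+1$ is again a diamond, and it remains to prove that it has type $1$ and that $[L_{n+q-2}y]=0$.

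For this final step I would repeat the calculation in the proof of Proposition~\ref{prop:diamond_r} with $r=2p^s$. Set $v_b=[vxy]$-shifted suitably, namely with $v_b$ spanning $L_{m+q-1}$ and $v_{b+k}$ defined by \eqref{eq:v_b+r_def}. Then expand $0=[v_b[v_{r+1}yx]]+[v_b[v_{r+1}xy]]$ with $r+1=2p^s+1$. The element $v_{2p^s+1}$ is now the element following the fake diamond at the start of the algebra, so its structure is different and the expansion must be rederived. Here $v_{2p^s}$ satisfies $[v_{2p^s}y]=0$, and $v_{2p^s+1}=[v_{2p^s}xyx^{q-2}]$ instead. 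The binomial coefficients $\binom{2p^s-1}{i}\equiv(-1)^i$ modulo $p$ for $i<p^s$, together with Lucas' theorem for the remaining range, should collapse the sums to a few boundary terms. This parallels how $\binom{r-1}{i}$ collapsed in Proposition~\ref{prop:diamond_r}, although $v_{p^s}$ and $v_{2p^s}$ may now contribute internal terms. Using Lemmas~\ref{lemma:v1}, \ref{lemma:v2}, \ref{lemma:v2ext}, and \ref{le:type_1} for the fake diamond, I expect a relation of the form $c(\mu)[v_{b+r+1}y]=0$. This relation should force either $\mu=1$ with $[L_{n+q-2}y]=0$, or a contradiction when $\mu\ne1$, including the cases $\mu=\infty$ and $\mu=0$.

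As a fallback, if the direct expansion becomes unwieldy, I would instead mimic Proposition~\ref{prop:dichotomy_first}. That means expanding $[uu]=0$ with $u=[v_{b+p^s}xyx^{(q-3)/2}]$, first to show that the diamond cannot have infinite type and then to show $[L_{n+q-2}y]=0$. The main obstacle is this last computation: getting the correct coefficient bookkeeping when the long product $v_{2p^s+1}$ passes through the earlier fake diamond, and checking that the resulting scalar is nonzero modulo $p>3$.
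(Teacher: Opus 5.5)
Your reduction matches the paper's. Lemma~\ref{le:one_past_L_m} together with $[L_{m+q-2}y]=0$ makes $L_{m+q}$ of infinite type. Iterating Proposition~\ref{prop:diamond_r} for $2\le r<2p^s$ then either produces a type-$1$ diamond $L_n$ with $[L_{n+q-2}y]=0$, or leaves the case where $L_{m+q},\dots,L_{m+(2p^s-1)(q-1)+1}$ all have infinite type. That remaining case is the whole content of the proposition, and your proposal leaves it unproved.

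Rerunning the computation of Proposition~\ref{prop:diamond_r} cannot settle it. There the outcome $(2+(-1)^r)\mu^{-1}[v_{b+r+1}y]=0$ is vacuous for $\mu=\infty$, which is exactly the case you must now exclude at $n=m+2p^s(q-1)+1$.

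Your proposed identity is also in the wrong degree. Since $v_{2p^s+1}=[v_{2p^s}xyx^{q-2}]$ lies in $L_{(2p^s+1)(q-1)+1}$, the element $[v_b[v_{2p^s+1}yx]]+[v_b[v_{2p^s+1}xy]]$ has degree $n+q$ if $v_b$ spans $L_{m-1}$, and $n+2q$ with your stated choice $v_b\in L_{m+q-1}$. The facts to be proved, $[L_{n-1}y]=0$ and $[L_{n+q-2}y]=0$, live in degrees $n$ and $n+q-1$. So that identity cannot yield a relation $c(\mu)[v_{b+2p^s+1}y]=0$.

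The fallback fails for a similar reason. With $u=[v_{b+p^s}xyx^{(q-3)/2}]$, the relation $[uu]=0$ lives in degree $2\deg u=n+m+q$, far beyond $n$. The self-bracket trick of Proposition~\ref{prop:dichotomy_first} only works because there the string starts at the bottom of $L$. Moreover, that proposition assumes the type $1$ as a hypothesis rather than deriving it.

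The missing idea is this. The only thing distinguishing the initial string from a string of $2p^s$ diamonds of infinite type is the fake diamond $L_{2p^s(q-1)+1}$. Its defining relations, $[v_{2p^s}y]=0$ and (by Proposition~\ref{prop:dichotomy_first}) $[v^{-1}_{2p^s+1}y]=0$, should be transported to the new position by bracketing with $[v_bx]$, where $v_b$ spans $L_{m-1}$.

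Writing $v_{2p^s}=[v_2v_1^{2p^s-2}]$, the paper expands $0=[v_bx[v_{2p^s}y]]=[v_bxv_{2p^s}y]-[v_bxyv_{2p^s}]$, which lies exactly in degree $n$. Lemmas~\ref{lemma:v1}, \ref{lemma:v2} and~\ref{le:type_1} give $[v_bxv_{2p^s}]=v_{b+2p^s}$ and $[v_bxyv_{2p^s}]=0$. Hence $[v_{b+2p^s}y]=0$, so $L_n$ has type $1$ directly, with no case analysis on $\mu$.

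Similarly, using $2v^{-1}_{2p^s+1}=[v_2v_1^{2p^s-1}]$, the expansion of $0=2[v_bx[v^{-1}_{2p^s+1}y]]$ lands in degree $n+q-1$. It gives $5[v_{b+2p^s+1}y]=0$, from which the paper concludes $[L_{n+q-2}y]=0$.
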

\begin{proof}
According to Lemma~\ref{le:one_past_L_m} the diamond $L_{m+q}$ has infinite type.
Therefore, Proposition~\ref{prop:diamond_r} inductively shows
that each homogeneous component $L_{m+r(q-1)+1}$
is either a diamond of infinite type or a diamond of type $1$, for $1<r<2p^s$.
In the latter case we have $[L_{m+(r+1)(q-1)}y]=0$.

Suppose $L_{m+r(q-1)+1}$ has infinite type, for every $1\leq r <2p^s$. We expand
the equation $0=[v_{b}x[v_{2p^s}y]]$, obtaining $0=[v_{b}xv_{2p^s}y]-[v_{b}xyv_{2p^s}]$.
To compute the first term of the difference, we expand
\[
[v_{b}xv_{2p^s}]=[v_{b}x[v_{2}v_{1}^{2p^{s}-2}]]=[v_{b}xv_{2}v_{1}^{2p^{s}-2}]+[v_{b}xv_{1}^{2p^{s}-2}v_{2}]=v_{b+2p^s},
\]
where we used the results in Lemmas~\ref{lemma:v1}, \ref{le:type_1} and~\ref{lemma:v2}.
To compute the second term of the difference, we expand
\[
[v_{b}xyv_{2p^s}]=[v_{b}xy[v_{2}v_{1}^{2p^{s}-2}]]=[v_{b}xyv_{1}^{2p^{s}-2}v_{2}]=0.
\]
We deduce $0=[v_{b+2p^s}y]$, whence $L_{m+2p^{s}(q-1)+1}$ has type $1$. To complete the proof, we show that $[L_{m+(2p^s+1)(q-1)}y]=0$.
Set $v_{b+2p^s+1}=[v_{b+2p^s}xyx^{q-3}]\in L_{m+(2p^{s}+1)(q-1)}$. Lemma~\ref{le:type_1} implies that $2v_{2p^s+1}^{-1}=[v_{2}v_{1}^{2p^2-1}]$. We expand
\[
0=2[v_{b}x[v_{2p^s+1}^{-1}y]]=
[v_{b}x[v_{2}v_{1}^{2p^{s}-1}]y]-[v_{b}xy[v_{2}v_{1}^{2p^{s}-1}]].
\]
To compute the first term in the difference, we expand
\[
[v_{b}x[v_{2}v_{1}^{2p^{s}-1}]y]=[v_{b}xv_{2}v_{1}^{2p^s-1}]-[v_{b}xv_{1}^{2p^s-1}v_{2}]=4v_{b+2p^s+1},
\]
by means of Lemmas~\ref{le:type_1}, \ref{lemma:v1}, \ref{lemma:v2} and~\ref{lemma:v2ext}.
Similarly, we have
\[
[v_{b}xy[v_{2}v_{1}^{2p^{s}-1}]]=-[v_{b}xyv_{1}^{2p^s-1}v_{2}]=-[v_{b+2p^s+1}y].
\]
Substituting in the equation above, we deduce $5[v_{b+2p^s+1}y]=0$, as desired.
\end{proof}

We conclude this section by proving Theorem~\ref{thm:uniqueness}.

\begin{proof}[Proof of Theorem~\ref{thm:uniqueness}]
Let $L$ as in the statement. According to Remark~\ref{rem:structure}
the diamond structure of $L$ is determined up to degree $(2p^{s}+1)(q-1)+3$.
Let now $L_{m}$ be a diamond of type $1$ of $L$ such that $[L_{m+q-2}y]=0$ and $L_{m-q+1}$ is a diamond of infinite type, for some $m\geq 2p^{s}(q-1)+1$.
Lemma~\ref{le:one_past_L_m} implies that $L_{m+q}$ is a diamond of infinite type.
Therefore, by Proposition~\ref{prop:diamond_r} and Proposition~\ref{prop:diamond_1},
each homogeneous component $L_{m+r(q-1)+1}$, with $1<r\leq 2p^s$, is either a diamond of infinite type or a  diamond of type $1$.
In the latter case, set $n=m+r(q-1)+1$, then $[L_{n+q-2}y]=0$. Finally, at least a diamond of type $1$ occurs in degree at most  $m+2p^s(q-1)+1$.
\end{proof}

\section{General calculations in a Nottingham algebra}\label{sec:gc}

We briefly recall some results on arbitrary Nottingham algebras,
quoting them from~\cite{AviMat:earliest} and referring the reader to that paper for proofs.

Let $L$ be an arbitrary Nottingham algebra with standard generators $x$ and $y$.
Let $L_m$ be a diamond of $L$, with arbitrary type $\mu\in\F\cup\{\infty\}$, for $m\geq q$ and $0\neq v_k\in L_{m-1}$. Assume $L_{m+q-1}$ is a diamond
(which is a consequence of
Theorem~\ref{thm:distance} only when $L_m$ is genuine).
Then according to
Theorem~\ref{thm:distance}
the element $y$ centralizes $L_{m+1},\ldots,L_{m+q-3}$
Define the element $v_{k+1}$, in degree $m+q-2$, as
\[
v_{k+1}=\begin{cases}
[v_{k}xyx^{q-3}] &\mbox{if } \mu \neq 0, \\
[v_{k}yx^{q-2}] &\mbox{otherwise}.
\end{cases}
\]
We start with describing the adjoint action of $v_{1}$ on the elements close to this diamond. We use the convention $\infty^{-1}=0$.

\begin{notation}\label{notation:v^-1}
When  $v_{h+1}=[v_{h}xyx^{q-3}]$, we write $v_{h+1}^{-1}$ for the element $[v_{h}xyx^{q-4}]$.
\end{notation}
\begin{lemma}\label{lemma:v1}
Suppose $[v_{k}yx]=(\mu^{-1}-1)[v_{k}xy]$ with $\mu \in \F^{\ast}\cup\{\infty\}$.
Then we have
\begin{align*}
&[v_{k}^{-1}v_{1}]=(2 \mu^{-1}+1)v_{k+1}^{-1},\\
&[v_{k}v_{1}]=(\mu^{-1}+1)v_{k+1}, \\
&[v_{k}xv_{1}]=[v_{k+1}x], \\
&[v_{k}yv_{1}]=(1-\mu^{-1})[v_{k+1}y], \\
&[v_{k}xyv_{1}]=-(2[v_{k+1}yx]+[v_{k+1}xy]), \\
&[v_{k}xyxv_{1}]=-(3[v_{k+1}yx^2]+2[v_{k+1}xyx]).
\end{align*}
\end{lemma}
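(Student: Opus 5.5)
The plan is to express $v_1=[yx^{q-2}]$ as an iterated bracket and expand each product $[\,\cdot\,,v_1]$ with the generalized Jacobi identity, using the support argument to kill almost every term. Write $v_1=[yx^{q-2}]$, so that for any element $a$
\[
[a\,v_1]=[a[yx^{q-2}]]=\sum_{i=0}^{q-2}(-1)^i\binom{q-2}{i}[a x^i y x^{q-2-i}],
\]
and $\binom{q-2}{i}\equiv(-1)^i(i+1)\pmod p$. Hence $[av_1]=\sum_{i}(i+1)[ax^iyx^{q-2-i}]$.

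For $a$ equal to $v_k^{-1}$, $v_k$, $[v_kx]$, $[v_ky]$, $[v_kxy]$ or $[v_kxyx]$, most terms vanish. A term survives only if $y$ is applied at a position where it does not annihilate. By Theorem~\ref{thm:distance}(d), $y$ centralizes $L_{m+1},\dots,L_{m+q-3}$, and $(\ad x)^q=0$ together with $[v_kxx]=0$ truncates long $x$-strings. So in each case only one or two indices $i$ survive: those where the $y$ hits $L_{m-1}$ (from $v_k^{-1}$ after one $x$) or the diamond $L_m$.

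Concretely, for $a=v_k$ the surviving terms are $i=0$ (giving $[v_kyx^{q-2}]$) and $i=1$ (giving $2[v_kxyx^{q-3}]=2v_{k+1}$). Using $[v_kyx]=(\mu^{-1}-1)[v_kxy]$ yields $(\mu^{-1}-1+2)v_{k+1}=(\mu^{-1}+1)v_{k+1}$. Likewise, for $v_k^{-1}$ the indices shift by one and give coefficients $2(\mu^{-1}-1)+3=2\mu^{-1}+1$. For $[v_kx]$ only $i=0$ survives, and for $[v_ky]$ only the $x$-first term survives, giving the stated coefficients.

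The last two identities are the main obstacle. There $a$ already lies past the diamond, so the $y$ in the expansion must land after the diamond $L_{m+q-1}$. One therefore needs the relations at $L_{m+q-1}$ itself, namely $[v_{k+1}xx]=0$, $[v_{k+1}yy]=0$ and the type relation. The surviving terms are $i=q-3$ and $i=q-2$, with coefficients $q-2\equiv-2$ and $q-1\equiv-1$. These give $-(2[v_{k+1}yx]+[v_{k+1}xy])$ directly, and analogously with one more $x$, using $\binom{q-2}{i}$ at $i=q-4,q-3,q-2$, for the final formula. Care is needed with the fact that $L_{m+q-1}$ may be fake, so I would verify via bidegree that no other terms reappear in that case.
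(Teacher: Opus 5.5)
Your proposal is correct and is essentially the paper's argument. The paper proves the first identity by exactly your expansion $[v_k^{-1}[yx^{q-2}]]=2[v_kyx^{q-3}]+3[v_kxyx^{q-4}]$, where the $i=0$ term vanishes because $y$ centralizes $L_{m-2}$, and cites \cite[Lemma~4.1]{AviMat:earliest} for the other five; your same generalized Jacobi and support computation recovers those five directly. One small correction: in the last identity the only fact you need about $L_{m+q-1}$ is $[v_{k+1}xx]=0$, which kills the $i=q-2$ term whether that diamond is genuine or fake, so neither its type relation nor $[v_{k+1}yy]=0$ is required.
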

\begin{proof}
All the stated equations, except the first one, follow
from~\cite[Lemma 4.1]{AviMat:earliest}.
Direct calculation shows
\[
[v_{k}^{-1}v_{1}]=[v_{k}^{-1}[yx^{q-2}]]=2[v_{k}yx^{q-3}]+3[v_{k}xyx^{q-4}]=
(2\mu^{-1}+1)v_{k+1}^{-1},
\]
as desired.
\end{proof}
\begin{rem}\label{rem:v1_mu_0}
In the case $\mu=0$ excluded from Lemma~\ref{lemma:v1}, similar calculations show
$[v_{k}^{-1}v_{1}]=2v_{k+1}^{-1}$.
\end{rem}

Assume the diamond $L_{m+q-1}$ has infinite type and set $v_{k+2}=[v_{k+1}xyx^{q-3}]$.
According to Theorem~\ref{thm:distance},
the element $y$ centralizes $L_{m+q},\ldots,L_{m+2q-4}$,
and $L_{m+2q-2}$ is a diamond.
We describe the adjoint action of $v_{2}$ on elements close to the diamond $L_m$.

\begin{lemma}[Lemma~4.5 in~\cite{AviMat:earliest}]\label{lemma:v2}
Suppose $[v_{k}yx]=(\mu^{-1}-1)[v_{k}xy]$, with $\mu\in \F^{\ast}\cup\{\infty\}$, and $[v_{k+1}yx]=-[v_{k+1}xy]$.
Then we have
\begin{align*}
&[v_{k}v_{2}]=\mu^{-1}v_{k+2}, \quad [v_{k}xv_{2}]=0=[v_{k}yv_{2}], \\
&[v_{k}xyv_{2}]=[v_{k+2}yx]+[v_{k+2}xy],\\
&[v_{k}xyxv_{2}]=2([v_{k+2}yx^2]+[v_{k+2}xyx]).
\end{align*}
\end{lemma}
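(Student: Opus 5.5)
The statement to prove is Lemma~\ref{lemma:v2}, which is quoted from~\cite{AviMat:earliest}. I would prove it by expressing $v_2$ as an iterated bracket of $v_1$ with standard generators and expanding with the generalized Jacobi identity. Since $v_2=[v_1xyx^{q-3}]$ and $v_1=[yx^{q-2}]$, I would write
\[
[a\,v_2]=[a[[v_1x]y]x^{q-3}]
=\sum_{i=0}^{q-3}(-1)^i\binom{q-3}{i}\,[a\,x^i\,[[v_1x]y]\,x^{q-3-i}],
\]
where $\binom{q-3}{i}\equiv(-1)^i(i+1)(i+2)/2\pmod p$. The inner bracket $[a x^i [v_1xy]]$ is then $[ax^i[v_1x]y]-[ax^iy[v_1x]]$, and $[v_1x]=[yx^{q-1}]$ can in turn be expanded as $\sum_j[ax^iyx^{q-1-j}\cdots]$-type sums.

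The key simplification is the support argument. By Theorem~\ref{thm:distance}, near the diamonds $L_m$ and $L_{m+q-1}$ the element $y$ centralizes everything except the components just before a diamond. Hence in each expanded sum only the terms in which every occurrence of $y$ lands on $v_k$, $[v_kx]$, $v_{k+1}$ or $[v_{k+1}x]$ survive. For $a=v_k$ this leaves two or three terms. I would evaluate these using the diamond relations: $[v_kyx]=(\mu^{-1}-1)[v_kxy]$, $[v_kxx]=[v_kyy]=0$, the infinite-type relation $[v_{k+1}yx]=-[v_{k+1}xy]$, and $(\operatorname{ad}x)^q=0$. Collecting the coefficients should give $[v_kv_2]=\mu^{-1}v_{k+2}$.

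A cleaner route is to start from Lemma~\ref{lemma:v1} and use $v_2=[v_1xyx^{q-3}]$ together with Jacobi. For instance,
\[
[a[v_1x]]=[av_1x]-[axv_1],
\]
and then $[a[v_1xy]]=[a[v_1x]y]-[ay[v_1x]]$. Applying Lemma~\ref{lemma:v1} at $v_k$ and at $v_{k+1}$ (the latter with $\mu=\infty$) reduces everything to brackets of $v_{k+1}$- or $v_{k+2}$-level elements with $x$ and $y$. The $x^{q-3}$ part is still expanded by the generalized Jacobi identity, but after the support argument only terms with $i$ close to $q-3$ survive. The remaining identities, $[v_kxv_2]=0=[v_kyv_2]$, $[v_kxyv_2]$ and $[v_kxyxv_2]$, would follow the same way, using $a=[v_kx]$, $[v_ky]$, $[v_kxy]$ and $[v_kxyx]$, or by bracketing the previous identity with $x$ or $y$ and subtracting $[v_k[v_2x]]$-type corrections. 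Those corrections vanish by support, since $[v_2x]$ lies in a degree where $y$-brackets die.

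I expect the main obstacle to be bookkeeping of the binomial coefficients modulo $p$ and the signs. The case $\mu=\infty$ (with $\mu^{-1}=0$) must be handled uniformly, and one must check in each case which few terms survive the support argument. This is especially delicate for $[v_kxyxv_2]$, which sits one step deeper and picks up contributions from two adjacent values of $i$.
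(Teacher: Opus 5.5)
Your main route is correct. You expand $v_2=[[v_1x]y\,x^{q-3}]$ by the generalized Jacobi identity, split $[a[[v_1x]y]]=[a[v_1x]y]-[ay[v_1x]]$ and $[a[v_1x]]=[av_1x]-[axv_1]$, and apply Lemma~\ref{lemma:v1} at $v_k$ and at $v_{k+1}$ (with $\mu=\infty$). This is the same kind of computation the paper carries out for the new first items of Lemmas~\ref{lemma:v1} and~\ref{lemma:v2ext}; for the present lemma the paper gives no proof and simply quotes \cite{AviMat:earliest}.

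One bookkeeping correction: which indices survive depends on $a$.
For $a=v_k$ only $i=0,1$ survive, since $[v_kxx]=0$. Hence $[v_kv_2]=[v_k[v_1xy]x^{q-3}]+3[v_kx[v_1xy]x^{q-4}]$. Lemma~\ref{lemma:v1} gives $[v_k[v_1x]]=\mu^{-1}[v_{k+1}x]$ and $[v_ky[v_1x]]=[v_kx[v_1x]]=[v_kxy[v_1x]]=0$, whence $[v_kv_2]=\mu^{-1}v_{k+2}$.
For $a=[v_kx]$ only $i=0$ survives.
Only for $a=[v_ky],[v_kxy],[v_kxyx]$ are the surviving indices near $q-3$. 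There you also need the first item of Lemma~\ref{lemma:v1} at $v_{k+1}$, namely $[v_{k+1}^{-1}v_1]=v_{k+2}^{-1}$, which gives $[v_{k+1}^{-1}[v_1x]]=0$.

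The fallback you offer for the last identities is wrong: the corrections $[v_k[v_2x]]$ and $[v_k[v_2y]]$ do not vanish. By the lemma itself, $[v_k[v_2x]]=[v_kv_2x]-[v_kxv_2]=\mu^{-1}[v_{k+2}x]$, and similarly $[v_k[v_2y]]=\mu^{-1}[v_{k+2}y]$. For instance, with $k=1$ and $\mu=-1$ in the algebras of Theorem~\ref{thm:uniqueness}, one gets $[v_1[v_2x]]=-[v_3x]\neq 0$.

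The support argument cannot kill these terms, for two reasons:
the bidegree of $[v_k[v_2x]]$ is that of $[v_{k+2}x]$, which lies in the support;
and $[v_2x]$ lies in the diamond $L_{2q-1}$, on which $y$ acts nontrivially ($[v_2xy]\neq 0$ when $L_{2q-1}$ has infinite type).

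Taking that shortcut would give $[v_kxv_2]=\mu^{-1}[v_{k+2}x]$ and $[v_kyv_2]=\mu^{-1}[v_{k+2}y]$, contradicting the statement. Drop it and treat $a=[v_kx],[v_ky],[v_kxy],[v_kxyx]$ directly, as in your primary plan, which does go through.
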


Assume the diamond $L_m$ has infinite type and the diamond $L_{m+q-1}$ has finite type $\mu$.
Set $v_{k+2}=[v_{k+1}xyx^{q-3}]$, unless $\mu=0$, in which case set $v_{k+2}=[v_{k+1}yx^{q-2}]$.
According to Theorem~\ref{thm:distance},
the element
$y$ centralizes $L_{m+q},\ldots,L_{m+2q-4}$.
Assume $L_{m+2q-2}$ is a diamond
(which is a consequence of Theorem~~\ref{thm:distance}
only if $\mu\neq 1$).
In case $\mu=1$ we do assume here that $L_{m+2q-2}$ is a diamond.
We describe the adjoint action of $v_{2}$ on the elements close to the diamond $L_m$.

\begin{lemma}\label{lemma:v2ext}
Suppose $[v_{k}xy]=-[v_{k}yx]$  and  $[v_{k+1}yx]=(\mu^{-1}-1)[v_{k+1}xy]$ for some $\mu\in \F^{\ast}$.
Then we have
\begin{align*}
&[v_{k}^{-1}v_{2}]=-3\mu^{-1}v_{k+2}^{-1},\\
&[v_{k}v_{2}]=-2\mu^{-1} v_{k+2},\\
&[v_{k}xv_{2}]=-\mu^{-1} [v_{k+2}x],\\
&[v_{k}yv_{2}]=-\mu^{-1} [v_{k+2}y], \\
&[v_{k}xyv_{2}]=[v_{k+2}xy]+(2\mu^{-1}+1)[v_{k+2}yx],\\
&[v_{k}xyxv_{2}]=2[v_{k+2}xyx]+(3\mu^{-1}+2)[v_{k+2}yx^2].
\end{align*}
\end{lemma}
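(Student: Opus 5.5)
We prove all six identities by expanding $v_2$ as an iterated bracket and applying the generalized Jacobi identity. The support argument then kills all but one or two terms in each sum, and the diamond relations at $L_m$ and $L_{m+q-1}$ evaluate the survivors. Write $v_2=[v_1xyx^{q-3}]$, and expand further $v_1=[yx^{q-2}]$ where convenient. For an element $u$ near the diamond $L_m$ (one of $v_k^{-1}$, $v_k$, $[v_kx]$, $[v_ky]$, $[v_kxy]$, $[v_kxyx]$), the generalized Jacobi identity gives
\[
[u v_2]=\sum_{i=0}^{q-3}(-1)^i\binom{q-3}{i}\,[u x^{i}[v_1xy]x^{q-3-i}],
\qquad [u[v_1xy]]=[uv_1xy]-[uxv_1y]-[uyv_1x]+[uxyv_1].
\]
Here $\binom{q-3}{i}\equiv(-1)^i\binom{i+2}{2}$ modulo $p$. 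The brackets $[ux^iv_1\cdots]$, $[ux^iyv_1\cdots]$ and $[ux^ixyv_1]$ are evaluated by Lemma~\ref{lemma:v1}, applied at $L_m$ (type $\infty$) and at $L_{m+q-1}$ (type $\mu$). That lemma is available because $L_{m+q-1}$ is a diamond and $L_{m+2q-2}$ is assumed to be a diamond.

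The key structural input is the support. Past $L_m$, the element $y$ centralizes $L_{m+1},\dots,L_{m+q-3}$, and $(\ad x)^q=0$. So $[ux^i]$ with $[ux^iy]\neq 0$, or with the bracket landing in a nonzero bidegree after adding $v_1$, occurs only for $i$ near $0$ or near $q-3$. Bidegree bookkeeping identifies which $i$ survive. For example, for $u=v_k$ we have bidegree$(v_k)+$bidegree$(v_2)=$bidegree$(v_{k+2})$. The only nonzero contributions come from the two ends, namely $i=0$, where $v_2$ acts "through" the diamond $L_m$, and $i=q-3$, where $v_1$ first passes $L_m$ via Lemma~\ref{lemma:v1} with $\mu=\infty$ and then $y$ hits near $L_{m+q-1}$. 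For each identity I would list the surviving indices and substitute.

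The order of work is as follows. First compute $[v_kv_2]$ and $[v_kxv_2]$, $[v_kyv_2]$; these have the fewest surviving terms. Each surviving term is rewritten as $[v_{k+1}x^{j}yx^{\ell}]$, and the type-$\mu$ relation $[v_{k+1}yx]=(\mu^{-1}-1)[v_{k+1}xy]$ reduces everything to multiples of $v_{k+2}$, $[v_{k+2}x]$, $[v_{k+2}y]$. For example, the $i=q-3$ term of $[v_kv_2]$ involves $[v_kv_1]=v_{k+1}$ (the $\infty$ case of Lemma~\ref{lemma:v1}), followed by $xy$, and the sign pattern produces the factor $-2\mu^{-1}$. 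Next handle $[v_kxyv_2]$ and $[v_kxyxv_2]$. There the last two formulas of Lemma~\ref{lemma:v1} at $L_{m+q-1}$ supply terms in both $[v_{k+2}yx]$ and $[v_{k+2}xy]$; no reduction between them is possible, since $L_{m+2q-2}$ may have any type. Finally compute $[v_k^{-1}v_2]$ using the first formula of Lemma~\ref{lemma:v1} together with $[v_k^{-1}x]=v_k$. Alternatively, derive it from $[v_kv_2]$ by the identity $[v_k^{-1}v_2x]=[v_kv_2]-[v_k^{-1}[v_2x]]$, with $[v_2x]$ handled similarly. Throughout, the relation $[v_kxy]=-[v_kyx]$ for the infinite-type diamond is used to collect terms.

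The main obstacle is the bookkeeping of which terms survive and with what binomial signs. This is especially delicate for $[v_kxyv_2]$ and $[v_kxyxv_2]$, where three contributions (from $i=q-4$, $q-3$ and from inside $[v_1xy]$) combine. A single sign error there changes the coefficient $2\mu^{-1}+1$. As a consistency check, I would verify that specializing $\mu=\infty$ ($\mu^{-1}=0$) recovers Lemma~\ref{lemma:v2} where the two overlap. For instance, $[v_kxyv_2]$ then becomes $[v_{k+2}xy]+[v_{k+2}yx]$, matching, and $[v_kxv_2]$ and $[v_kyv_2]$ vanish, matching. This check constrains the signs effectively.
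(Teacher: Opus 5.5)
Your strategy is the same as the paper's: expand $v_2$ by the generalized Jacobi identity, use the support argument, and evaluate the survivors with Lemma~\ref{lemma:v1}. The paper quotes five of the identities from \cite[Lemma~4.7]{AviMat:earliest} and proves $[v_k^{-1}v_2]=-3\mu^{-1}v_{k+2}^{-1}$ by exactly such an expansion. However, the two concrete computational claims in your proposal are both wrong, and carried out as written they would not give the stated coefficients.

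\textbf{The four-term expansion.} Since $[u[[v_1x]y]]=[u[v_1x]y]-[uy[v_1x]]$, the correct identity is
\[
[u[v_1xy]]=[uv_1xy]-[uxv_1y]-[uyv_1x]+[uyxv_1],
\]
not $\dots+[uxyv_1]$. This is not cosmetic:
\begin{itemize}
\item for $u=v_k$ it reverses the sign of that term, because $[v_kyx]=-[v_kxy]$;
\item for $u=[v_kx]$ it replaces $[v_kxyxv_1]$ by $[v_kxxyv_1]=0$;
\item for $u=v_k^{-1}$ it adds $[v_kyv_1]=[v_{k+1}y]$, whereas the correct term vanishes since $[v_k^{-1}y]=0$.
\end{itemize}
Your formula yields $[v_kv_2]=(3\mu^{-1}-1)v_{k+2}$ and $[v_kxv_2]=(2\mu^{-1}-1)[v_{k+2}x]$. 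Your own $\mu^{-1}=0$ comparison with Lemma~\ref{lemma:v2} would reject both.

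\textbf{The survivor analysis for $u=v_k$.} Because $[v_kxx]=0$ and $q-3\ge 2$, the $i=q-3$ term $[v_kx^{q-3}[v_1xy]]$ is zero. The survivors are $i=0$ and $i=1$, contributing $\mu^{-1}v_{k+2}$ and $3\cdot(-\mu^{-1})v_{k+2}$. The bracket $[v_kv_1xy]$ that you attribute to $i=q-3$ actually sits in the $i=0$ term, and $-2\mu^{-1}$ arises as the sum of these two contributions.

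In general, only small $i$ survive for $u\in\{v_k^{-1},v_k,[v_kx]\}$. The top-end terms arise for $u\in\{[v_ky],[v_kxy],[v_kxyx]\}$, and only they need Lemma~\ref{lemma:v1} at $L_{m+q-1}$, hence the hypothesis on $L_{m+2q-2}$. In your alternative route the sign is also off: $[v_k^{-1}v_2x]=[v_kv_2]+[v_k^{-1}[v_2x]]$.

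\textbf{The paper's bookkeeping.} It avoids these pitfalls by keeping $[v_1x]$ as a single entry. Lemma~\ref{lemma:v1} with $\mu=\infty$ gives $[v_k[v_1x]]=0=[v_k^{-1}[v_1x]]$, and $[v_kx[v_1x]]=0$ by support. So for the first three identities you only need $[v_ky[v_1x]]=-\mu^{-1}[v_{k+1}xy]$ and $[v_kxy[v_1x]]=\mu^{-1}[v_{k+1}xyx]$. For instance,
$[v_k^{-1}v_2]=-3[v_ky[v_1x]x^{q-4}]-6[v_kxy[v_1x]x^{q-5}]=(3-6)\mu^{-1}v_{k+2}^{-1}$.
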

\begin{proof}
All the stated equations except the first one follow from~\cite[Lemma~4.7]{AviMat:earliest}. We expand
\[
[v_{k}^{-1}v_{2}]=[v_{k}^{-1}[v_{1}x]yx^{q-3}]+3[v_{k}[v_{1}x]yx^{q-4}]
-3[v_{k}y[v_{1}x]x^{q-4}]-6[v_{k}xy[v_{1}x]x^{q-5}].
\]
Since $[v_{k}^{-1}[v_{1}x]]=0=[v_{k}[v_{1}x]]$, we deduce
\[
[v_{k}^{-1}v_{2}]=-3[v_{k}y[v_{1}x]x^{q-4}]-6[v_{k}xy[v_{1}x]x^{q-5}]=-3\mu^{-1}v_{k+2}^{-1},
\]
as desired.
\end{proof}
\begin{rem}\label{rem:v2ext_mu_0}
In the excluded case $\mu=0$ we find $[v_{k}^{-1}v_{2}]=-3v_{k+2}^{-1}$,
$[v_{k}v_{2}]=-2v_{k+2}$, $[v_{k}xv_{2}]=-[v_{k+2}x]$,
$[v_{k}yv_{2}]=-[v_{k+2}y]$, $[v_{k}xyv_{2}]=2[v_{k+2}yx]$ and $[v_{k}xyxv_{2}]=3[v_{k+2}yx^2]$.
\end{rem}
Suppose now that $L_{m}$ is a diamond of type $1$ such that $[L_{m+q-2}y]=0$. Let $0\neq v_{b}\in L_{m-1}$ and set $v_{b+1}=[v_{b}xyx^{q-2}]$ and $v_{b+2}=[v_{b}xyx^{q-3}]$. 
As an immediate consequence of Lemmas~\ref{lemma:v1} and~\ref{lemma:v2} we have
\begin{lemma}\label{le:type_1}
Under the above assumptions we have
\begin{align*}
&[v_{b}v_{1}]=2v_{b+1}^{-1}  &&[v_{b}xv_{1}]=v_{b+1} \\
&[v_{b}xyv_{1}]=-[v_{b+1}y]  &&[v_{b}xyv_{2}]=0
\end{align*}
Also, when $L_{m+q}$ has infinite type we have  $[v_{b}v_{2}]=2v_{b+2}^{-1}$ and $[v_{b}xv_{2}]=v_{b+2}$.
\end{lemma}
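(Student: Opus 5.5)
The identities involving $v_1$ can be obtained by expanding $v_1=[yx^{q-2}]$ with the generalized Jacobi identity and killing almost every term by the relations available near a fake diamond of type $1$. The identities involving $v_2$ then follow by writing $v_2=[[v_1x]yx^{q-3}]$ and reducing to the $v_1$-identities just proved. The first half amounts to specializing Lemma~\ref{lemma:v1} to $\mu=1$. I would nonetheless redo it directly, because here $v_{b+1}$ is defined as $[v_bxyx^{q-2}]$, with the extra shift caused by $[L_{m+q-2}y]=0$, rather than by the recipe preceding Lemma~\ref{lemma:v1}. In the statement, $v_{b+1}^{-1}$ and $v_{b+2}^{-1}$ are read as in Notation~\ref{notation:v^-1}, namely $[v_bxyx^{q-3}]$ and $[v_{b+1}xyx^{q-4}]$.

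\emph{Relations used.} Since $\binom{q-2}{i}\equiv(-1)^i(i+1)\pmod p$, we have
\[
[a[yx^{q-2}]]=\sum_{i=0}^{q-2}(i+1)[ax^iyx^{q-2-i}].
\]
The available relations are the following:
\begin{itemize}
\item $[v_by]=0$ and $[v_bxx]=0$, from the definition of type $1$;
\item $(\ad y)^2=0$;
\item $y$ centralizes $L_{m+1},\dots,L_{m+q-3}$, by Theorem~\ref{thm:distance}(d);
\item $y$ also centralizes $L_{m+q-2}$, by hypothesis.
\end{itemize}

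\emph{The $v_1$-identities.}
\begin{itemize}
\item For $a=v_b$, every term dies except $i=1$, which gives $[v_bv_1]=2[v_bxyx^{q-3}]=2v_{b+1}^{-1}$.
\item For $a=[v_bx]$, only $i=0$ survives, giving $[v_bxv_1]=[v_bxyx^{q-2}]=v_{b+1}$.
\item For $a=[v_bxy]$, the term $i=0$ vanishes by $(\ad y)^2=0$. The terms $1\le i\le q-3$ vanish because $[v_bxyx^i]\in L_{m+1+i}$ is centralized by $y$. Only $i=q-2$ remains, with coefficient $q-1\equiv-1$, so $[v_bxyv_1]=-[v_{b+1}y]$.
\end{itemize}
Each vanishing term can alternatively be justified by the support argument of Section~\ref{sec:grading}.

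\emph{The $v_2$-identities.} Write $v_2=[[v_1x]yx^{q-3}]$ and expand $[a v_2]$ by the generalized Jacobi identity in $x$, using $\binom{q-3}{i}\equiv(-1)^i\binom{i+2}{2}$. Each resulting term has the form $[ax^i[[v_1x]y]x^{q-3-i}]$. Then use
\[
[c[[v_1x]y]]=[c[v_1x]y]-[cy[v_1x]]
\quad\text{and}\quad
[c[v_1x]]=[cv_1x]-[cxv_1],
\]
so everything reduces to brackets of elements near $L_m$ with $v_1$, which were computed above. Since $v_2$ has bidegree $(2q-3,2)$, most terms fall outside the support: the positions of $L_m$, $L_{m+q-1}$ (read as type $0$ in $L_{m+q}$) and the infinite-type diamond $L_{m+2q-1}$ constrain which bidegrees are occupied.
\begin{itemize}
\item For $a=[v_bxy]$, I expect every term to land either at a bidegree killed by $[L_{m+q-2}y]=0$ or in $[v_{b+1}yy]=0$. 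This gives $[v_bxyv_2]=0$ without using the type of $L_{m+q}$.
\item For $a=v_b$ and $a=[v_bx]$, assuming $L_{m+q}$ has infinite type, the surviving terms should be one or two products of the form $[v_{b+1}\,x\,y\,x^{j}]$. Using $[v_{b+1}xxy]$-type vanishing and $[zxy]=-[zyx]$ for $z=[v_{b+1}x]$, they should combine to $2v_{b+2}^{-1}$ and $v_{b+2}$ respectively. This mirrors how Lemma~\ref{lemma:v2} arises from Lemma~\ref{lemma:v1}.
\end{itemize}

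\emph{Main obstacle.} I expect the hardest step to be the $v_2$ computations, specifically tracking which of the $q-2$ Jacobi terms survive and getting their binomial coefficients right. The diamond following $L_m$ sits at distance $q$ rather than $q-1$, so the surviving indices are shifted by one compared with Lemma~\ref{lemma:v2}. If the direct bookkeeping becomes unwieldy, my fallback is to apply Lemma~\ref{lemma:v2} formally with $\mu=1$, re-indexed to the component $L_{m+1}$ read as a diamond of type $0$ via Convention~\ref{convention:fake}, and then translate back.
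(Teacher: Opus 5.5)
Your derivation of the three $v_1$-identities is correct. It is the $\mu=1$ case of Lemma~\ref{lemma:v1}, which is how the paper obtains them.

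The gap is in the $v_2$-half. You only sketch it, and your one definite prediction is wrong. You claim that every term of $[v_bxyv_2]$ dies by $[L_{m+q-2}y]=0$ or $[v_{b+1}yy]=0$, whatever the type of $L_{m+q}$. Carry it out instead. Since $y$ centralizes $L_{m+1},\dots,L_{m+q-2}$, expanding $v_2=[[v_1xy]x^{q-3}]$ and $[v_1x]=[yx^{q-1}]$ gives
\[
[v_bxyv_2]=\sum_{i=0}^{q-3}\binom{i+2}{2}[v_bxyx^i[v_1x]yx^{q-3-i}],
\qquad
[v_bxyx^i[v_1x]]=[v_{b+1}yx^{i+1}]+[v_{b+1}xyx^{i}].
\]
If $L_{m+q}$ has type $\mu$, the second bracket equals $\mu^{-1}[v_{b+1}xyx^i]$. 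After bracketing with $y$ only $i=q-3$ survives, so $[v_bxyv_2]=\mu^{-1}[v_{b+2}y]$. No $[v_{b+1}yy]$ ever appears. The terms cancel precisely because of the infinite-type relation $[v_{b+1}yx]=-[v_{b+1}xy]$ at $L_{m+q}$.

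So the missing ingredient is the infinite type of $L_{m+q}$. It is needed for $[v_bxyv_2]=0$ just as for the other two $v_2$-identities: the same expansion gives $[v_bv_2]=2(1-\mu^{-1})v_{b+2}^{-1}$ and $[v_bxv_2]=(1-\mu^{-1})v_{b+2}$.

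This is also why your reduction does not close on the three $v_1$-brackets you computed. It produces brackets such as $[v_bxyxv_1]$, $[v_bxyx^{q-3}v_1]$ and $[v_{b+1}v_1]$. These are Lemma~\ref{lemma:v1} applied at the diamond $L_{m+q}$, and they depend on its type.

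The paper gets the $v_2$-identities from Lemma~\ref{lemma:v2}. Its hypothesis $[v_{k+1}yx]=-[v_{k+1}xy]$ on the following diamond supplies, transported to this situation, exactly the relation $[v_{b+1}yx]=-[v_{b+1}xy]$. Accordingly, $[v_bxyv_2]=0$ should be read under the same infinite-type hypothesis as the last two identities. That is the situation in every application in the paper; in Section~\ref{sec:mixed} it is guaranteed by Lemma~\ref{le:one_past_L_m}.

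Two minor slips. The element $v_2$ has bidegree $(2q-4,2)$, not $(2q-3,2)$. The infinite-type relation is $[zxy]=-[zyx]$ for $z=v_{b+1}$, not for $z=[v_{b+1}x]$.
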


\bibliography{References}

\end{document}